\newcommand{\nodes}{\mathcal{N}}
\newcommand{\vertices}{\mathcal{V}}
\newcommand{\verticesint}{\mathcal{V}^\circ}
\newcommand{\nodest}{\nodes_\bft}
\newcommand{\nodesint}{\nodes^\circ}
\newcommand{\nablax}{\nabla_\bfx}
\newcommand{\partialt}{\partial_\bft}
\newcommand{\triat}{\mathcal{T}_\bft}
\newcommand{\triax}{\mathcal{T}_\bfx}
\newcommand{\It}{\Pi_\bft}
\newcommand{\Ix}{\Pi_\bfx}
\newcommand{\Kx}{{K_\bfx}}
\newcommand{\Kt}{{K_\bft}}
\newcommand{\tria}{\mathcal{T}}
\newcommand{\tpsi}{\widetilde{\psi}}
\newcommand{\tPi}{\Pi}
\newcommand{\Pizero}{\Pi_0}
\newcommand{\dx}{\,\mathrm{d}x}
\newcommand{\ds}{\,\mathrm{d}s}
\newcommand{\dsig}{\,\mathrm{d}\sigma}
\providecommand{\RTk}{\mathcal{RT}_k(\tria)}
\providecommand{\That}{{\widehat{T}}}
\providecommand{\skpTtmp}[3]{{\ensuremath{#1\langle {#2}, {#3} #1\rangle_{\That}}}}
\providecommand{\skpT}[2]{\skpTtmp{}{#1}{#2}}
\providecommand{\interC}{\mathcal{I}}
\renewcommand{\bft}{t}
\renewcommand{\bfx}{x}
\begin{document}

\author[L.\ Diening]{Lars Diening}
\author[J.\ Storn]{Johannes Storn}
\author[T.\ Tscherpel]{Tabea Tscherpel}

\address[L.\ Diening, J.\ Storn]{Department of Mathematics, Bielefeld University, Postfach 10 01 31, 33501 Bielefeld, Germany}
\email{lars.diening@uni-bielefeld.de}
\email{jstorn@math.uni-bielefeld.de}
\address[T.\ Tscherpel]{Department of Mathematics, TU Darmstadt, Dolivostraße 15, 64293 Darmstadt, Germany}
\email{tscherpel@mathematik.tu-darmstadt.de}
\thanks{
The work of the authors was supported by the Deutsche Forschungsgemeinschaft (DFG, German
Research Foundation) – SFB 1283/2 2021 – 317210226.}

\subjclass[2020]{
 	65D05, 
    65M15, 
	65N15, 
    65N30, 
}
\keywords{Scott-Zhang, interpolation, projection, dual norm, negative Sobolev space, singular data}

\title{Interpolation Operator on negative Sobolev Spaces}

\begin{abstract}
  We introduce a Scott--Zhang type projection operator mapping to Lagrange elements for arbitrary polynomial order.  In addition to the usual properties, this operator is compatible with duals of first order Sobolev spaces.  More specifically, it is stable in the corresponding negative norms and allows for optimal rates of convergence.  We discuss alternative operators with similar properties.  As applications of the operator we prove interpolation error estimates for parabolic problems and smoothen rough right-hand sides in a least squares finite element method.
\end{abstract}
\maketitle
\section{Introduction}
\label{sec:introduction}
Interpolation operators are an important tool in numerical analysis and thus have been studied intensively \cite{Clement75,ScottZhang90,Carstensen99,Melenk05,DieningRuszicka07,ErnGuermond17}. 
While the focus of these works lies primarily on error control in $W^{s,p}(\Omega)$ for non-negative orders $s\in \mathbb{N}_0$, less is known for error control in negative Sobolev spaces $W^{-1,p}(\Omega)$ with $p\in (1,\infty)$. Such results are, however, needed for recent developments including
\begin{itemize}
\item time-stepping and space-time finite element methods for parabolic problems, where the time-derivative can only be controlled in this norm (cf.~\cite{Steinbach15,TantardiniVeeser16,StevensonWesterdiep20}),
\item multilevel decompositions  (cf.~\cite{BPV.2000,WZ.2017,F.2021}) and operator preconditioning (cf.~\cite{SV.2020,Sv.2021}),
\item regularizations for PDEs with rough right-hand side, in particular for methods that require the right-hand side to be in $L^p(\Omega)$ like some DG, DPG, non-conforming, or least squares schemes  (cf.~\cite{VeeserZanotti18,VeeserZanotti18b,VeeserZanotti19,FHK.2022}),
\item a~posteriori error control for rough right-hand sides (cf.~\cite{KreuzerVeeser21}).
\end{itemize}
This paper addresses this issue by designing a local Scott--Zhang type projection operator $\Pizero$ (resp.\ $\Pi$) onto the Lagrange finite element space of arbitrary polynomial degree $k\in \mathbb{N}$. 
The operator is stable and achieves optimal rates of convergence with respect to Sobolev norms of order larger or equal to minus one as stated in our main result in Theorem~\ref{thm:interpolOperator}. 

The construction of the operator and its analysis is carried out in Section~\ref{sec:higher-order} and generalizes Tantardini's approach for the lowest-order case $k=1$ \cite[Sec.~7.6]{Tantardini12}. 
We modify the classical design of the celebrated Scott--Zhang projection operator \cite{ScottZhang90} by using inner products with weights suited for functions in negative norm Sobolev spaces.  In particular, our weights are continuous, which allows testing with dual Sobolev functions. 
Furthermore, the design leads to an adjoint operator $\Pizero^*$ that preserves constants on interior simplices. 
Both properties are not available for most classical interpolation operators (see Section~\ref{sec:AlternativeOp}) but essential for the proof of stability and optimal convergence in Section~\ref{subsec:ProofThm1}. 
In Section~\ref{sec:local-basis} we define the weight functions locally by using polynomials of higher polynomial degree and in Section~\ref{sec:global-basis} we use them to obtain a global biorthogonal system. 
Section~\ref{sec:BoundaryCorr} displays a modification of $\Pizero$ in the vicinity of the boundary. 
This leads to an operator $\Pi$ with optimal approximation properties for functions without zero boundary traces.

In Section~\ref{sec:AlternativeOp} we compare the operator $\Pizero$ to existing operators suited for the approximation in the negative norm Sobolev space. 
For some applications it is beneficial to have a self-adjoint operator, which our operator is not.   
However, this property is not compatible with the operator being a local projection as demonstrated in Section~\ref{sec:SelfAdjProj}. 
Therefore, in Section~\ref{sec:SelfAdjLoc} we introduce an alternative local and self-adjoint operator, that is, however, not a projection. This reduces the maximal rate of convergence by one. 
Section~\ref{sec:LocalProj} illustrates that local projection operators with the beneficial properties of $\Pizero$ need local weight functions  of higher polynomial degree. 
This justifies the more involved design of $\Pizero$ in Section~\ref{sec:higher-order}. 

In Section~\ref{sec:applications} we present several applications of the novel operators. 
In Sections~\ref{sec:InterpolSemiDiscr} and~\ref{sec:InterPolTensor} we investigate interpolation error estimates for parabolic problems. 
Section~\ref{sec:InterpolSemiDiscr} utilizes recent results in \cite{TantardiniVeeser16} showing quasi-optimality of semi-discrete numerical schemes. 
Applying the operator $\Pizero$ leads to an a~priori error estimate for arbitrary polynomial degrees $k\in \mathbb{N}$, 
similar to the one obtained for $k = 1$ in \cite[Prop.~7.27]{Tantardini12}.  

For parabolic problems simultaneous space-time finite element methods represent an alternative to standard time-marching schemes. 
Such space-time schemes treat the time as an additional spatial dimension and apply techniques available for time-independent problems, see for example  \cite{Steinbach15,LangerMooreNeumueller16,StevensonWesterdiep20,DieningStorn21}.
In Section~\ref{sec:InterPolTensor} we construct an interpolation operator based on $\Pizero$ that improves existing interpolation error estimates on tensor product meshes significantly. 
In Section~\ref{sec:SmoothingRHS} we apply the operator $\Pi$ to the right-hand side of a least squares finite element method. 
This leads to a computable scheme even for right-hand sides $f\not\in L^2(\Omega)$. Based on the properties of $\Pi$ we conclude optimal rates of convergence.
The appendix displays results on approximation theory in Bochner spaces used in Section~\ref{sec:InterPolTensor}.

\subsubsection*{Notation}
 Throughout this paper we use the following notation. 
We suppose that $\Omega \subset \setR^d$ is a bounded, polyhedral domain with a shape regular triangulation~$\tria$ consisting of closed simplices. 
For any simplex $T \in \tria$ we denote the neighboring patch of $T$ by $\omega_T \coloneqq \bigcup \lbrace T'\in \tria\colon T \cap T' \neq \emptyset\rbrace$ and the local mesh size $h_T \coloneqq \textup{diam}(T)$. 
We denote by $\mathcal{V}$ the set of vertices in $\tria$. 
For any vertex $j \in \mathcal{V}$ we define the closed vertex patches by $\omega_j \coloneqq \bigcup\lbrace T\in \tria\colon j \in T\rbrace$ and $\omega_j^2 \coloneqq \bigcup\lbrace T \in \tria\colon T \cap \omega_j \neq \emptyset\rbrace$, and the local mesh size by $h_j \coloneqq \textup{diam}(\omega_j)$.

By~$L^p(\Omega)$ and $W^{s,p}(\Omega)$ we denote the standard Lebesgue and Sobolev space of order~$s\in \mathbb{N}_0$ and integrability~$p \in [1,\infty]$. 
Whenever $m,s$ are indices for the differentiability, then they are integers.
The set $W^{1,p}_0(\Omega)$ denotes the subset of~$W^{1,p}(\Omega)$-functions with zero boundary traces. 
In the following let $p'$ be the H\"older exponent of $p \in [1,\infty]$, defined by  $1/p +1/p' =1$. 
For $p \in (1,\infty]$ we denote by $W^{-1,p}(\Omega) \coloneqq (W^{1,p'}_0(\Omega))^*$ the dual Sobolev space. 
We write $\skp{f}{g}_\Omega$ both for the $L^2$-scalar product $\int_\Omega f(x)\cdot g(x)\dx$ and for the dual pairing $\skp{f}{g}_{W^{-1,p}(\Omega),W_0^{1,p'}(\Omega)}$.

Let $\mathcal{P}_k(T)$ denote the space of polynomials of degree at most~$k$ on $T\in \tria$. 
Let~$\mathcal{L}^1_k(\tria)$ for $k \in \setN$ denote the space of Lagrange functions of order~$k$, i.e.,\ $W^{1,1}$-functions whose restriction to each $d$-simplex $T\in \tria$ is in~$\mathcal{P}_k(T)$. 
Let $\mathcal{L}^1_{k,0}(\tria) \coloneqq \mathcal{L}^1_k(\tria) \cap W^{1,1}_0(\Omega)$ denote the subset of discrete functions with zero boundary traces.

If there exist generic constants $0< c \leq C<\infty$ that might depend on fixed parameters as the polynomial degree $k\in \mathbb{N}$, the domain $\Omega$, and the shape regularity of $\tria$, but are independent of any further quantities, we write $A \lesssim B$ for $A \leq CB$ and $A \eqsim B$ for $c B \leq A \leq C B$.

\section{Projection operator}
\label{sec:higher-order}
In this section we design an interpolation operator that is well-defined for functions in negative Sobolev spaces. 
The operator is a projection and has useful additional properties including (local) stability and approximation properties. 
\subsection{Main results: Properties of the projection operator}
In Sections~\ref{sec:local-basis}--\ref{sec:BoundaryCorr} we present for arbitrary $k\in \mathbb{N}$ Scott--Zhang type interpolation operators 
\begin{align*}
  \Pizero\colon W^{-1,2}(\Omega) \to \mathcal{L}_{k,0}^1(\tria)
   \qquad\text{and}\qquad
    \Pi\colon W^{-1,2}(\Omega) \to \mathcal{L}_{k}^1(\tria).
\end{align*}
We shall see that those operators are defined on the larger dual space $(W^{1,\infty}_0(\Omega))^*$, and hence in particular on $W^{-1,p}(\Omega)$ for any $p \in (1, \infty]$. 
Compared to existing operators their main advantages are stability and approximation properties including ones with respect to negative Sobolev norms. 
They represent a generalization of the lowest order case $k = 1$ introduced in \cite[Sec.~7.6]{Tantardini12}. 
All results are presented for scalar functions but obviously extend to vector-valued functions. 
 
\begin{theorem}[Main result]
  \label{thm:interpolOperator}
  Let $k \in \mathbb{N}$ be arbitrary.   
  The operators~$\Pizero$ and $\Pi$ are linear projections onto~$\mathcal{L}^1_{k,0}(\tria)$ and $\mathcal{L}^1_k(\tria)$, respectively. In addition, they have the following properties, for any $j\in \mathcal{V}$ and $T\in \tria$: 
  
  \noindent%
  \textbf{\,Localization of norms.} For $p \in (1,\infty)$ 
      there holds
  \begin{align}\label{eq:ApxEqui}
       \norm{\xi - \Pizero \xi }_{W^{-1,p}(\Omega)} & \eqsim \Big(\sum_{j \in \mathcal{V}} \norm{\xi- \Pizero \xi}_{W^{-1,p}(\omega_j)}^p \Big)^{1/p}\quad\text{for all }\xi \in W^{-1,p}(\Omega).
  \end{align}
  
  \noindent%
  \textbf{\,Approximabilty.} For $p \in (1,\infty)$, $q \in [1,\infty]$ and $0 \leq m \leq s \leq k+1$ one has
  \begin{align}\label{eq:ApxIloc}
    \begin{alignedat}{2}
      \norm{v - \Pizero v }_{W^{-1,p}(\omega_j)} & \lesssim   h_j \norm{v}_{L^{p}(\omega_j^2)}  &&\text{for all } v \in L^{p}(\Omega),\\
      \norm{w - \Pizero w }_{W^{-1,p}(\omega_j)}&\lesssim  h_j^{s+1} \norm{\nabla^s w}_{L^p(\omega^2_j)} 
      \quad 
      &&\text{for all } w\in W^{1,p}_0(\Omega)\cap W^{s,p}(\Omega),
      \\
      \norm{\nabla^m (w - \Pizero w)}_{L^q(T)} &\lesssim h_T^{s-m} \norm{ \nabla^s w }_{L^q(\omega_T)}
     \quad &&\text{for all } w \in W^{1,q}_0(\Omega)\cap W^{s,q}(\Omega).
    \end{alignedat}
  \end{align}
  
  \noindent%
  \textbf{\,Local Stability.}  For $p \in (1,\infty)$, $q \in [1,\infty]$ and $0 \leq s \leq k$ there holds
  \begin{align}
    \begin{alignedat}{2}
      \norm{\Pizero \xi}_{W^{-1,p}(\omega_j)} &\lesssim \norm{\xi}_{W^{-1,p}(\omega_j^2)} &&\text{for all }\xi \in W^{-1,p}(\Omega),
      \\
      \lVert \Pizero v \rVert_{L^q(T)}& \lesssim \lVert v \rVert_{L^q(\omega_T)} &&\text{for all }v\in L^q(\Omega),
      \\
      \lVert \nabla^s\Pizero w\rVert_{L^q(T)}& \lesssim \lVert \nabla^s w \rVert_{L^q(\omega_T)} \qquad &&\text{for all }w\in W^{1,q}_0(\Omega)\cap W^{s,q}(\Omega).
    \end{alignedat}
  \end{align}

  \noindent%
  \textbf{\,Global Stability.}   For $p \in (1,\infty)$, $q \in [1,\infty]$ and $0 \leq s \leq k$ one has
  \begin{align}
    \begin{alignedat}{2}
      \norm{\Pizero \xi}_{W^{-1,p}(\Omega)} &\lesssim \norm{\xi}_{W^{-1,p}(\Omega)} &\qquad&\text{for all }\xi \in W^{-1,p}(\Omega),
      \\
      \lVert \Pizero v \rVert_{L^q(\Omega)}& \lesssim \lVert v \rVert_{L^q(\Omega)} &&\text{for all }v\in L^q(\Omega),
      \\
      \lVert \nabla^s\Pizero w \rVert_{L^q(\Omega)}& \lesssim \lVert \nabla^s w \rVert_{L^q(\Omega)}&&\text{for all }w\in W^{1,q}_0(\Omega)\cap W^{s,q}(\Omega).
    \end{alignedat}
  \end{align}
  All statements of the theorem are valid for the projection $\Pi$, even for $w \in W^{1,q}(\Omega)$ and $w \in W^{1,p}(\Omega)$ instead of $w \in W^{1,q}_0(\Omega)$ and $w \in W^{1,p}_0(\Omega)$, respectively. 
  The hidden constants are independent of $p$ and~$q$.
\end{theorem}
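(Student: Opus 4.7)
The plan is to build the proof on three facts established in Sections~\ref{sec:local-basis}--\ref{sec:BoundaryCorr}: the biorthogonality $\skp{\phi_i}{\psi_j}_\Omega = \delta_{ij}$ between the Lagrange nodal basis $\{\phi_j\}$ and the weight basis $\{\psi_j\}$; the locality of each $\psi_j$ (supported in a patch of bounded hops around $\omega_j$); and the scaling bounds $\norm{\nabla^m \psi_j}_{L^q} \lesssim h_j^{d/q - d - m}$. The projection property then is immediate: for $v_h = \sum_i c_i \phi_i \in \mathcal{L}^1_{k,0}(\tria)$, biorthogonality gives $\Pizero v_h = \sum_j \skp{v_h}{\psi_j}_\Omega \phi_j = v_h$.

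For the $L^q$ and $W^{s,q}$ stability and approximation estimates, I would follow a standard route. I expand $\Pizero v = \sum_j \skp{v}{\psi_j}_\Omega \phi_j$ on a simplex $T$, bound each coefficient by H\"older and the weight scaling, and sum over the bounded number of nodes touching $T$. The approximation estimates then follow by writing $w - \Pizero w = (I - \Pizero)(w - \pi w)$ for a local polynomial $\pi w$ of degree $\leq k$ and invoking Bramble--Hilbert; global stability follows from local stability plus finite overlap.

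The $W^{-1,p}$ estimates require duality together with the adjoint $\Pizero^* \phi = \sum_j \skp{\phi_j}{\phi}_\Omega \psi_j$ and the key property that $\Pizero^*$ preserves constants on interior simplices. Local $W^{-1,p}$ stability follows by testing $\skp{\Pizero \xi}{\phi}_\Omega = \skp{\xi}{\Pizero^* \phi}_\Omega$ against $\phi \in W^{1,p'}_0(\omega_j)$ and using constant preservation together with Poincar\'e on each interior simplex to get $\norm{\Pizero^* \phi}_{W^{1,p'}(\omega_j^2)} \lesssim \norm{\phi}_{W^{1,p'}(\omega_j)}$. The $W^{-1,p}$ approximation estimates exploit the identity
\begin{equation*}
  \skp{(I - \Pizero) w}{\phi}_\Omega = \skp{(I - \Pizero) w}{\phi - \Pizero^* \phi}_\Omega,
\end{equation*}
which holds since $\Pizero^2 = \Pizero$; the additional factor $h_j$ in the bound comes from Poincar\'e applied to $\phi - \Pizero^* \phi$ on interior simplices.

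The main obstacle I expect is the localization identity~\eqref{eq:ApxEqui}. The $\lesssim$ direction is routine by finite overlap, but the $\gtrsim$ direction is more delicate. I would fix a partition of unity $\{\chi_j\}$ subordinate to $\{\omega_j\}$ with $\norm{\nabla \chi_j}_{L^\infty} \lesssim h_j^{-1}$ and split
\begin{equation*}
  \skp{(I - \Pizero) \xi}{\phi - \Pizero^* \phi}_\Omega = \sum_j \skp{(I - \Pizero) \xi}{\chi_j(\phi - \Pizero^* \phi)}_\Omega,
\end{equation*}
bounding each term by $\norm{(I-\Pizero)\xi}_{W^{-1,p}(\omega_j)} \, \norm{\chi_j(\phi - \Pizero^* \phi)}_{W^{1,p'}(\omega_j)}$ and collecting via discrete H\"older. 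The subtle step is that the commutators arising from $\nabla(\chi_j \cdot)$ are absorbed precisely because constant preservation of $\Pizero^*$ makes $\phi - \Pizero^* \phi$ locally small compared with $h_j \norm{\nabla \phi}$, yielding $\sum_j \norm{\chi_j(\phi - \Pizero^* \phi)}_{W^{1,p'}(\omega_j)}^{p'} \lesssim \norm{\phi}_{W^{1,p'}(\Omega)}^{p'}$. Finally, the statements for $\Pi$ with nonzero boundary traces follow from repeating these arguments together with the boundary correction introduced in Section~\ref{sec:BoundaryCorr}.
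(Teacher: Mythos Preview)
Your plan is essentially the paper's own proof: biorthogonality gives the projection property, the scaling of the weights gives local $L^q$-stability, the projection property plus inverse estimates (your Bramble--Hilbert step) gives the positive-order approximation, and the adjoint's preservation of constants on interior simplices drives both the $W^{-1,p}$ stability and the localization via the identity $\skp{(I-\Pizero)\xi}{w}_\Omega = \skp{(I-\Pizero)\xi}{w-\Pizero^* w}_\Omega$. Two small points are worth flagging. First, the paper uses the Bernstein basis $(b_i)$ rather than the Lagrange nodal basis; this is cosmetic for the proof of the theorem itself, but be consistent with the construction in Sections~\ref{sec:local-basis}--\ref{sec:global-basis}. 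Second, you have the two directions of~\eqref{eq:ApxEqui} swapped: for a \emph{dual} norm the inequality $\big(\sum_j \norm{\cdot}_{W^{-1,p}(\omega_j)}^p\big)^{1/p} \lesssim \norm{\cdot}_{W^{-1,p}(\Omega)}$ is the routine one (it holds for any $g\in W^{-1,p}(\Omega)$ by testing against $\sum_j s_j w_j$ with $w_j\in W^{1,p'}_0(\omega_j)$ and using finite overlap), while the bound $\norm{\cdot}_{W^{-1,p}(\Omega)} \lesssim \big(\sum_j \ldots\big)^{1/p}$ is the delicate one. The partition-of-unity argument you wrote out in fact proves the latter, so your mathematics is correct --- only the labels ``$\lesssim$'' and ``$\gtrsim$'' are interchanged.
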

A key in the proof of Theorem~\ref{thm:interpolOperator} are certain properties of the $L^2$-adjoint operators $\Pizero^*$ and $\Pi^*$ of $\Pizero$ and $\Pi$, respectively, presented in the following theorem.
\begin{theorem}[Adjoint operators]\label{thm:PropPi*}
Let $\Pizero$ and $\Pi$ be the projection operators in Theorem~\ref{thm:interpolOperator}. 
  The $L^2$-adjoint projection $\Pizero^*$ maps 
  $(W_0^{1,\infty}(\Omega))^*$ onto a subset of $\mathcal{L}^1_{3k,0}(\tria)$ and satisfies the following properties, for any $j \in \vertices$ and $T \in \tria$: 

  \noindent%
  \textbf{\,Preservation of constants.} $\Pizero^*$ preserves constants on interior simplices, i.e.,
  \begin{align}\label{eq:Adj1}
    (\Pizero^*1)|_T = 1 \qquad \text{ for all }\, T\in \tria \text{ with }T\cap \partial \Omega = \emptyset.
  \end{align}

  \noindent%
  \textbf{\,Localization of norms.} For any $p \in (1,\infty)$ 
      there holds
  \begin{align}\label{eq:ApxEquidual}
       \norm{\xi - \Pizero^* \xi }_{W^{-1,p}(\Omega)} & \eqsim \Big(\sum_{j \in \mathcal{V}} \norm{\xi- \Pizero^* \xi}_{W^{-1,p}(\omega_j)}^p \Big)^{1/p}\quad\text{for all }\xi \in W^{-1,p}(\Omega).
  \end{align}

  \noindent%
  \textbf{\,Approximabilty.} For $p \in (1,\infty)$, $q \in [1,\infty]$ and $0 \leq m \leq s \leq 1$ one has
  \begin{align}
    \begin{alignedat}{2}
      \norm{v - \Pizero^* v }_{W^{-1,p}(\omega_j)} & \lesssim   h_j \norm{v}_{L^{p}(\omega_j^2)}  &\quad&\text{for } v \in L^{p}(\Omega),\\
      \norm{w - \Pizero^* w }_{W^{-1,p}(\omega_j)}&\lesssim  h_j^{s+1} \norm{\nabla^s w}_{L^p(\omega^2_j)} 
       &&\text{for } w\in W^{1,p}_0(\Omega)\cap W^{s,p}(\Omega),
      \\
      \norm{\nabla^m (w - \Pizero^* w)}_{L^q(T)} &\lesssim h_T^{s-m} \norm{ \nabla^s w }_{L^q(\omega_T)}&&\text{for } w \in W^{1,q}_0(\Omega)\cap W^{s,q}(\Omega).
    \end{alignedat}
  \end{align}

  \noindent%
  \textbf{\,Local Stability.}  For $p \in (1,\infty)$ and $q \in [1,\infty]$ there holds
  \begin{align}
    \begin{alignedat}{2}
      \norm{\Pizero^* \xi}_{W^{-1,p}(\omega_j)} &\lesssim \norm{\xi}_{W^{-1,p}(\omega_j^2)} &\qquad&\text{for all }\xi \in W^{-1,p}(\Omega),
      \\
      \lVert \Pizero^* v \rVert_{L^q(T)}& \lesssim \lVert v \rVert_{L^q(\omega_T)} &&\text{for all }v\in L^q(\Omega),
      \\
      \lVert \nabla\Pizero^* w \rVert_{L^q(T)}& \lesssim \lVert \nabla w \rVert_{L^q(\omega_T)}&&\text{for all }w\in W^{1,q}_0(\Omega).
    \end{alignedat}
  \end{align}

  \noindent%
  \textbf{\,Global Stability.}  For $p \in (1,\infty)$ and $q \in [1,\infty]$ one has
  \begin{align}
    \begin{alignedat}{2}
      \norm{\Pizero^* \xi}_{W^{-1,p}(\Omega)} &\lesssim \norm{\xi}_{W^{-1,p}(\Omega)} &\qquad&\text{for all }\xi \in W^{-1,p}(\Omega),
      \\
      \lVert \Pizero^* v \rVert_{L^q(\Omega)}& \lesssim \lVert v \rVert_{L^q(\Omega)} &&\text{for all }v\in L^q(\Omega),
      \\
      \lVert \nabla\Pizero^* w \rVert_{L^q(\Omega)}& \lesssim \lVert \nabla w \rVert_{L^q(\Omega)}&&\text{for all }w\in W^{1,q}_0(\Omega).
    \end{alignedat}
  \end{align}
  All statements of the theorem except localization and stability in $W^{-1,p}(\Omega)$ and $W^{-1,p}(\omega_j)$ are valid for the $L^2$-adjoint projection $\Pi^*\colon L^1(\Omega) \to \mathcal{L}^1_{3k,0}(\tria)$ as well. Still, the localization in~\eqref{eq:ApxEquidual} holds for functions in $L^p(\Omega)$.
   The hidden constants are independent of $p$ and $q$.
\end{theorem}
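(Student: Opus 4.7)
The key is to work with the explicit biorthogonal representation
\begin{align*}
  \Pizero \xi = \sum_{j \in \mathcal{V}} \skp{\xi}{\psi_j}_\Omega \phi_j,
\end{align*}
established in Sections~\ref{sec:local-basis}--\ref{sec:global-basis}, where $\{\phi_j\}$ is the nodal basis of $\mathcal{L}^1_{k,0}(\tria)$ and $\{\psi_j\} \subset \mathcal{L}^1_{3k,0}(\tria)$ are the locally supported biorthogonal weights. Dualizing immediately yields
\begin{align*}
  \Pizero^* \xi = \sum_{j \in \mathcal{V}} \skp{\xi}{\phi_j}_\Omega \psi_j,
\end{align*}
which is well-defined for $\xi \in (W^{1,\infty}_0(\Omega))^*$ since $\phi_j \in W^{1,\infty}_0(\Omega)$, and which by construction takes values in $\mathcal{L}^1_{3k,0}(\tria)$.

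The first step I would carry out is the preservation of constants~\eqref{eq:Adj1}, which anchors the rest of the proof. For an interior simplex $T$ and a test function $v \in C_c^\infty(T)$, the adjoint identity gives
\begin{align*}
  \int_T v\,(\Pizero^* 1) = \int_\Omega \Pizero v = \sum_{j \in \mathcal{V}} \skp{v}{\psi_j}_\Omega \int_\Omega \phi_j,
\end{align*}
so~\eqref{eq:Adj1} reduces to the partition-of-unity identity $\sum_{j \in \mathcal{V}} (\int_\Omega \phi_j)\,\psi_j \equiv 1$ on any interior region. This is the dual counterpart of the projection property $\Pizero \mathbf{1} = \mathbf{1}$ against cutoffs supported away from $\partial \Omega$, and should follow directly from the construction of the global biorthogonal system in Section~\ref{sec:global-basis}.

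With~\eqref{eq:Adj1} in hand, the approximability estimates for $0 \le m \le s \le 1$ follow by the standard Bramble--Hilbert / Dupont--Scott machinery on the patch $\omega_T$, provided one has local $L^q$- and $W^{1,q}$-stability of $\Pizero^*$. Stability itself follows from the series expansion together with the scaling bounds $\lVert \psi_j \rVert_{L^q(T)} \lesssim h_j^{d/q}$, $\lVert \phi_j \rVert_{L^{q'}(\Omega)} \lesssim h_j^{d/q'}$, and the bounded overlap of the patches. The $W^{-1,p}$-approximability and stability are obtained by duality: for $\eta \in W^{1,p'}_0(\Omega)$ one has $\skp{\Pizero^*\xi}{\eta}_\Omega = \skp{\xi}{\Pizero\eta}_\Omega$, and one invokes the already-proved $W^{1,p'}$-stability and approximability of $\Pizero$ from Theorem~\ref{thm:interpolOperator}. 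The norm localization~\eqref{eq:ApxEquidual} is a partition-of-unity argument mirroring the one for $\Pizero$, using that $\{\psi_j\}$ has finite patch overlap. Global stability then follows by summing the local bounds.

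For the statement about $\Pi^*$, the boundary-corrected basis from Section~\ref{sec:BoundaryCorr} remains in $L^\infty$ but is in general no longer in $W^{1,\infty}_0(\Omega)$, so the pairing $\skp{\xi}{\phi_j}_\Omega$ is only meaningful for $\xi \in L^1(\Omega)$; this explains both the weaker domain $L^1(\Omega)$ and the absence of $W^{-1,p}$-stability and localization for $\Pi^*$. The main obstacle I anticipate is the rigorous verification of the partition-of-unity identity $\sum_j (\int_\Omega \phi_j)\,\psi_j \equiv 1$ on interior regions, and the careful treatment of the boundary strip in the $W^{1,p}_0$-approximability estimate, where constants are not preserved and the zero trace of $w$ has to be exploited through a Hardy- or Poincaré-type argument to recover the full rate.
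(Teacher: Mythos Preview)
Your plan is essentially the paper's own route: the explicit adjoint formula, preservation of constants via the mass identity $\sum_i \skp{1}{b_i}_\Omega\,\psi_i = 1$ (Proposition~\ref{pro:localWeights}\,\ref{itm:preserv-mass}), local $L^q$-stability from the scaling of the weights, approximation via constants on interior patches and Friedrichs/Poincar\'e on boundary patches, localization by partition of unity, and negative-order estimates by duality with the $W^{1,p'}$-properties of~$\Pizero$.

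Two small points to fix. First, the sum defining~$\Pizero$ runs over the interior Lagrange nodes~$\nodesint$ of order~$k$, not the vertex set~$\mathcal{V}$; for $k>1$ these differ, and the basis you call~$\phi_j$ is the Bernstein basis~$b_i$, not the piecewise-linear hat functions. Second, your explanation for why $\Pi^*$ is only defined on~$L^1(\Omega)$ is slightly inverted: the boundary-corrected weights $\tpsi_i$ \emph{do} lie in $W^{1,\infty}_0(\Omega)$ (Proposition~\ref{pro:localWeightsMod}); the obstruction is that $\Pi$ now sums over all of~$\nodes$, so $\Pi^* v = \sum_{i\in\nodes}\skp{v}{b_i}_\Omega\,\tpsi_i$ involves pairings with $b_i$ for boundary nodes~$i$, and these $b_i$ do not vanish on~$\partial\Omega$.
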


To prove Theorem~\ref{thm:interpolOperator}  and~\ref{thm:PropPi*} we locally construct in Section~\ref{sec:local-basis} a system of polynomials of degree $3k$ dual to the Bernstein polynomials of degree $k$. 
Those allow us to find a system of global biorthogonal functions in Section~\ref{sec:global-basis}. 
In Section~\ref{subsec:ProofThm1} we design the operator~$\Pizero$ mapping to $\mathcal{L}^1_{k,0}(\tria)$ and verify its properties. The key ingredient in the proof of approximation and stability results of $\Pi_0$ in negative Sobolev norms is the preservation of constants of the adjoint operator $\Pi_0^*$ in \eqref{eq:Adj1}, see Remark~\ref{rem:PresMass} below for further discussions. 
In Section~\ref{sec:BoundaryCorr} we introduce~$\Pi$, which modifies~$\Pizero$ close to the boundary and maps to $\mathcal{L}^1_k(\tria)$. 

\begin{remark}[Fractional orders]\label{rem:FracOrder} 
  Combining the localization of the norm and interpolating between the approximation estimates allows us to obtain estimates with fractional orders of convergence. 
\end{remark}
\begin{remark}[Other approaches]
   Classical Scott--Zhang type operators, such as the one in \cite{ScottZhang90}, use discontinuous weight functions and hence are not defined on $W^{-1,2}(\Omega)$.  
 In addition their adjoint operators do not preserve constants on interior simplices as in~\eqref{eq:Adj1}, which is a key in the proof of optimal convergence rates in Section~\ref{subsec:ProofThm1} below. 
Operators that are better suited for approximation in~$W^{-1,2}(\Omega)$ include the following. 
  \begin{itemize}
  \item In \cite[Thm.~5.2]{TantardiniVeeser16} Tantardini and Veeser design for arbitrary polynomial degree $k\in \mathbb{N}$ an operator with $W^{-1,2}(\Omega)$-approximability for~$W^{s,2}(\Omega)$-functions with $s=0,\dots, k+1$. 
  However, the operator is not well-defined on~$W^{-1,2}(\Omega)$.
  \item In \cite[Sec.~7.6]{Tantardini12} Tantardini introduces an operator for $k=1$ defined on~$W^{-1,2}(\Omega)$ with $W^{-1,2}$-approximability for functions in $W^{-1,2}(\Omega)$. 
This operator is well-defined on $W^{-1,2}(\Omega)$ and coincides with our lowest-order operator. 
Indeed, the reformulation of this operator inspires our generalization to~$k \geq 1$, see Remark~\ref{rmk:TV-reform}. 
  \item 
    In \cite{F.2021} F\"uhrer introduces several  operators for the analysis of multilevel preconditioning.  
    One of them is a projection onto the space~$\mathcal{L}^0_0(\tria)$ and allows  for~$W^{-1,2}(\Omega)$-approximability of functions in~$L^2(\Omega)$, see~\cite[Thm.~8]{F.2021}.  
    This operator is $W^{-1,2}(\Omega)$-stable, but not $W^{1,2}(\Omega)$-stable. 
\item In~\cite{SV.2020} Stevenson and van~Veneti\"e develop an alternative projection for the analysis of operator preconditioning. Their operator satisfies approximation and stability estimates in $W^{-1,2}(\Omega)$, but has a slightly increased support. For a more detailed comparison see Remark~\ref{rmk:op-Stevenson}. 
  \item In~\cite{KreuzerVeeser21}  Kreuzer and Veeser introduce an operator that is locally $W^{-1,2}$-stable with local $W^{-1,2}$-approximability.
The operator is a projection to piecewise constant functions enriched by the span of face-supported Dirac distributions. 
Thus, it does not map to a polynomial space but to true functionals (not representable by~$L^2$-densities).
\end{itemize}
A possible alternative technique to derive similar localized estimates as in Theorem~\ref{thm:interpolOperator} is discussed in \cite[Thm.~3]{Veeser17}. 
\end{remark}
\subsection{Local construction}
\label{sec:local-basis}

In this section we locally construct a system of polynomial functions of degree $3k$ that forms a biorthogonal system in conjunction with Bernstein polynomials of degree $k\in \mathbb{N}$. 

\subsubsection*{Bernstein basis}
Let $\nodes$ denote the set of Lagrange nodes of $\mathcal{L}^1_k(\tria)$, for $k \in \mathbb{N}$. 
As for the Lagrange basis we can assign each Bernstein basis function $b_i \in \mathcal{L}^1_k(\tria)$ to a Lagrange node~$i \in \nodes$. 
Its support $\omega_i \coloneqq \textup{supp}(b_i)\subset \overline{\Omega}$ consists of those $d$-simplices $T\in \tria$ that contain the Lagrange node~$i \in \nodes$, that is, $\omega_i = \bigcup \lbrace T\in \tria\colon i \in T\rbrace$. 
The Bernstein polynomials are best defined in terms of local barycentric coordinates~$\lambda = (\lambda_0, \ldots ,\lambda_d)$. 
To each Lagrange node~$i \in \nodes$ and each simplex $T \in \tria(\omega_i) \coloneqq \lbrace T' \in \tria\colon T\subset \omega_i\rbrace$ we assign a multi-index $ \alpha(i,T) =  \alpha \in \setN_0^{d+1}$ with $\abs{\alpha} \coloneqq \abs{\alpha_0} + \dots + \abs{\alpha_0} = k$ that naturally corresponds to the node~$i$.  
Then $b_i$ is an element in the space $\mathcal{P}_k(T)$ of polynomials with maximal degree $k$ and is locally given by
\begin{align}
  \label{eq:def-Bernstein}
  b_i|_T &= b_{\alpha(i,T)}  =  b_\alpha =
  \frac{\abs{\alpha}!}{\alpha!} \lambda^\alpha \in \mathcal{P}_k(T).
\end{align}
The set of Bernstein polynomials $\set{b_i\colon i \in \nodes}$ is a basis of~$\mathcal{L}^1_k(\tria)$ and $\set{b_i\colon i \in \nodesint}$ with $\nodesint \coloneqq \lbrace i \in \nodes\colon i \not\in \partial \Omega\rbrace$ is a basis of~$\mathcal{L}^1_{k,0}(\tria)$. 
Each function $b_i$ is non-negative, which is crucial for the following arguments. 
Note that locally on $T \in \tria$ we have
\begin{align}
  \label{eq:local-partition-of-unity}
  \sum_{\abs{\alpha}=k} b_\alpha
  &= 
    \sum_{\abs{\alpha}=k}   \frac{\abs{\alpha}!}{\alpha!} \lambda^\alpha  = \bigg( \sum_{j=0}^d \lambda_j \bigg)^k = 1.
\end{align}
This implies that $(b_i)_{i \in \nodes}$ is a partition of unity, that is,
\begin{align}
  \label{eq:global-partition-of-unity}
  \sum_{i \in \nodes} b_i &= 1,
\end{align}
which is essential for the following construction. 
For the local construction of a system of biorthogonal polynomials we consider the standard reference $d$-simplex $\That$ with volume $\abs{\That} = 1/d!$.   
In the following let $\alpha,\beta,\mu \in \setN_0^d$ be multi-indices with $\abs{\alpha}=\abs{\beta}=\abs{\mu}=k$.
The value $\skpT{b_\alpha}{1}$ is independent of~$\alpha$ and is denoted by
\begin{align}\label{eq:ck}
  c_k \coloneqq \skpT{b_\alpha}{1} = \frac{k!}{(d+k)!}.
\end{align}
\begin{lemma}[Local dual basis]
  \label{lem:local-basis}
  For each multi-index $\alpha \in \mathbb{N}_0^{d+1}$ with $\abs{\alpha} = k$ there exists a polynomial $p_\alpha \in \mathcal{P}_{3k}(\That)$ such that the following properties are satisfied.
  \begin{enumerate}
  \item \label{itm:local-product} For any $\alpha$ there exists a polynomial $z_{\alpha} \in \mathcal{P}_{2k}(\That)$ such that
  \begin{align*}
  p_{\alpha} = b_{\alpha} z_{\alpha}. 
  \end{align*}
  \item \label{itm:local-duality}
    For all multi-indices $\alpha,\beta \in \mathbb{N}^{d+1}_0$ with $\abs{\alpha}=\abs{\beta}=k$ we have
    \begin{align*}
      \skpT{p_{\alpha}}{b_{\beta}} = \delta_{\alpha,\beta}. 
    \end{align*}
  \item \label{itm:local-one} One has that
    \begin{align*}
      \sum_{\abs{\alpha} = k} p_{\alpha} = \frac{1}{c_k} = \frac{(d+k)!}{k!}. 
    \end{align*}
  \item \label{itm:local-symmetry}  The functions $z_{\alpha}$ and $p_\alpha$ are symmetric with respect to permutations of the barycentric coordinates.
  \end{enumerate}
\end{lemma}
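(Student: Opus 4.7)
The plan is to recast properties (1)--(4) as a finite-dimensional linear-algebra problem on the reference simplex $\That$, establish existence of a biorthogonal polynomial by a surjectivity argument, and exploit the natural $S_{d+1}$-symmetry of barycentric coordinates together with a correction step to secure the partition-of-unity and symmetry properties.

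First I would fix $\alpha$ with $\abs{\alpha}=k$ and consider the linear map $L_\alpha\colon\mathcal{P}_{2k}(\That)\to\mathcal{P}_k(\That)$ characterized via the $L^2(\That)$-Riesz representation by $\skpT{L_\alpha z}{r}=\skpT{b_\alpha z}{r}$ for all $r\in\mathcal{P}_k(\That)$. Its $L^2$-adjoint is simply multiplication by $b_\alpha$ as a map $\mathcal{P}_k(\That)\to\mathcal{P}_{2k}(\That)$, which is injective since $b_\alpha\neq 0$ a.e.\ on $\That$. Hence $L_\alpha$ is surjective, and there exists a preliminary $z_\alpha^{(0)}\in\mathcal{P}_{2k}(\That)$ satisfying $\skpT{b_\alpha z_\alpha^{(0)}}{b_\beta}=\delta_{\alpha,\beta}$ for all $\beta$. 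Setting $p_\alpha=b_\alpha z_\alpha^{(0)}$ immediately yields (1) and (2).

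Property (3) is not free from (2): combining (2) with the local partition of unity $\sum_\beta b_\beta=1$ only forces $\sum_\alpha p_\alpha^{(0)}-1/c_k\in\mathcal{P}_{3k}(\That)$ to be $L^2$-orthogonal to $\mathcal{P}_k(\That)$, not to vanish. I would then correct the ansatz by replacing $z_\alpha^{(0)}$ with $z_\alpha=z_\alpha^{(0)}+w_\alpha$ for suitable $w_\alpha\in\ker L_\alpha$ (which preserves biorthogonality), chosen so that $\sum_\alpha b_\alpha w_\alpha=1/c_k-\sum_\alpha p_\alpha^{(0)}$. The key algebraic ingredient is the spanning identity
\[
\sum_\alpha b_\alpha\ker L_\alpha=\mathcal{P}_{3k}(\That)\cap\mathcal{P}_k(\That)^{\perp},
\]
which I would establish via duality, using the multi-index Chu--Vandermonde identity $\sum_{\alpha+\gamma=\mu}b_\alpha^{(k)}b_\gamma^{(2k)}=b_\mu^{(3k)}$ (so that $\sum_\alpha b_\alpha\mathcal{P}_{2k}=\mathcal{P}_{3k}$) together with a dimension count.

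Property (4) follows by making the whole construction $S_{d+1}$-equivariant. For any permutation $\sigma$ one has $b_{\sigma\cdot\alpha}(\lambda)=b_\alpha(\sigma^{-1}\cdot\lambda)$ and the $L^2(\That)$-pairing is $\sigma$-invariant, so both the biorthogonality system and the partition-of-unity constraint are $S_{d+1}$-equivariant. Selecting the canonical minimum-norm solution (equivalently, averaging over the symmetric group) then produces $z_\alpha$, and hence $p_\alpha$, transforming equivariantly, which is (4). The step I expect to require the most care is the spanning identity underlying (3); once that is in hand the remainder reduces to routine linear algebra plus a symmetrization that does not interfere with (2) or (3) because both constraints are themselves $S_{d+1}$-invariant. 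A fallback, if that identity turns out to be delicate in full generality, is to define $z_\alpha$ explicitly by inverting a $b_\alpha$-weighted Gram matrix of Bernstein polynomials and then adding an explicit symmetric correction in $\ker L_\alpha$ tailored to the partition-of-unity defect.
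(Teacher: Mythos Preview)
Your route is genuinely different from the paper's, and it is worth seeing the contrast. You first secure biorthogonality (properties (1) and (2)) by the easy surjectivity of $L_\alpha$ and defer the partition-of-unity constraint (3) to a correction step in $\ker L_\alpha$. The paper does the opposite: it chooses the ansatz
\[
z_\alpha \;=\; \tfrac{(d+k)!}{k!} \;+\; q_\alpha \;-\; \sum_{\abs{\mu}=k} b_\mu q_\mu,\qquad q_\alpha\in\mathcal{P}_k(\That),
\]
which forces $\sum_\alpha b_\alpha z_\alpha=1/c_k$ \emph{identically} for any choice of the $q_\alpha$'s. Property (3) is then free, and (2) becomes a square linear system on the quotient $\mathcal{P}_k(\That)^N/\mathcal{P}_k(\That)$; its injectivity is proved by a short Jensen-type argument that uses the non-negativity of the Bernstein polynomials in an essential way. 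Symmetry (4) follows because the whole ansatz is manifestly permutation-equivariant.

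The weak point in your proposal is precisely the step you flag: the spanning identity
\[
\sum_{\abs{\alpha}=k} b_\alpha\,\ker L_\alpha \;=\; \mathcal{P}_{3k}(\That)\cap \mathcal{P}_k(\That)^\perp .
\]
Your suggested proof (``duality, Chu--Vandermonde, dimension count'') does not go through as stated. Chu--Vandermonde gives $\sum_\alpha b_\alpha\,\mathcal{P}_{2k}=\mathcal{P}_{3k}$, but a dimension count cannot finish the job: already for $d=1$, $k=2$ one has $\dim\bigl(\prod_\alpha\ker L_\alpha\bigr)=6$ versus $\dim\bigl(\mathcal{P}_6\cap\mathcal{P}_2^\perp\bigr)=4$, so surjectivity is not forced by dimensions. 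Unwinding the duality reduces the claim to showing that if $g\in\mathcal{P}_{3k}$ admits, for every $\alpha$, some $r_\alpha\in\mathcal{P}_k$ with $b_\alpha(g-r_\alpha)\perp\mathcal{P}_{2k}$, then $g\in\mathcal{P}_k$; this is plausible but not obvious, and you have not supplied an argument. Your fallback (invert a weighted Gram matrix and add an ``explicit'' correction in $\ker L_\alpha$) faces the same obstruction: without the spanning identity you have no guarantee that the required correction exists. In short, the paper's ansatz buys you exactly the piece you are missing --- it collapses the existence question to a single clean injectivity statement whose proof exploits positivity of the $b_\alpha$, a feature that plays no role in your outline.
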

\begin{proof}
Inspired by the operator introduced by Tantardini~\cite{Tantardini12} for~$k=1$ (see Remark~\ref{rmk:TV-reform} below), we use the ansatz, with functions~$q_\alpha \in \mathcal{P}_k(\That)$ to be determined,
  \begin{align}\label{eq:ansatz-p}
    p_{\alpha} &= b_{\alpha} z_{\alpha}  \quad \text{ with }\quad   
    z_{\alpha} =  \frac{(d+k)!}{k!} + q_{\alpha} - \sum_{\abs{\mu} = k} b_{\mu} q_{\mu}. 
  \end{align} 
  Then we have that $z_{\alpha} \in \mathcal{P}_{2k}(\That)$ and $p_{\alpha} \in \mathcal{P}_{3k}(\That)$ and thus~\ref{itm:local-product} is satisfied. 
  The property $\sum_{\abs{\alpha}=k}b_\alpha =1$ leads to
  \begin{align*}
    \sum_{\abs{\alpha}=k}
    p_{\alpha}
    &= \sum_{\abs{\alpha}=k} b_{\alpha} \bigg( \frac{(d+k)!}{k!} + q_{\alpha} - \sum_{\abs{\mu} = k} b_{\mu} q_{\mu} \bigg)
      = \frac{(d+k)!}{k!}.
  \end{align*}
  Hence, the property in~\ref{itm:local-one} results directly from our ansatz as well.
  The crucial step is to determine $q_\alpha$ such that~\ref{itm:local-duality} holds. 
  Let $N \coloneqq \dim \mathcal{P}_k(\That)$. For all vectors $(r_\mu)_{\abs{\mu}= k} \in \mathcal{P}_k(\That)^N$ we abbreviate the sum
  \begin{align*}
    \bar{r} & \coloneqq \sum_{\abs{\mu}=k} b_\mu r_\mu.
  \end{align*}
  With the definition in~\eqref{eq:ansatz-p} and with this notation,~\ref{itm:local-duality} is equivalent to
  \begin{align}
    \label{eq:goal1}
    \skpT{b_\alpha (q_\alpha- \bar{q})}{b_\beta} = \delta_{\alpha,\beta} - \frac{(d+k)!}{k!} \skpT{b_\alpha}{b_\beta} \qquad \text{for all }\abs{\alpha}=\abs{\beta}=k.
  \end{align}
  Let us define the linear operator~$S$ that maps vectors of polynomials $(r_\alpha)_{\abs{\alpha}=k}$ to the matrix $( \skpT{b_\alpha (r_\alpha- \bar{r})}{b_\beta})_{\alpha,\beta}$ in the sense that
  \begin{align*}
    S\colon \big(\mathcal{P}_k(\That)\big)^N \to \setR^{N \times N}\qquad\text{with}\qquad (r_\alpha)_{\abs{\alpha}=k} \mapsto (\skpT{b_\alpha (r_\alpha- \bar{r})}{b_\beta})_{\alpha,\beta}.
  \end{align*}  
  The vectors in $(\mathcal{P}_k(\That))^N$ have~$N^2$ degrees of freedom and the image has $N^2$ entries. 
  However, the operator~$S$ is neither injective nor surjective.

 The first isomorphism theorem states that the restriction $S\colon U \mapsto S(U)$ with quotient space $U = (\mathcal{P}_k(\That))^N / \ker(S)$ is an isomorphism. 	
Let $(r_{\alpha})_{\abs{\alpha} = k}$ be in the kernel of $S$, i.e., 
\begin{align}
	\skpT{b_\alpha(r_\alpha - \bar{r})}{b_\beta} &= 0 \qquad \text{for all }\alpha, \beta. 
\end{align}
Since $r_\alpha \in \linearspan \set{b_\beta\colon \abs{\beta}=k}$, it follows in particular that 
\begin{align}
	\label{eq:fill-diagonal3v}
	\skpT{b_\alpha(r_\alpha - \bar{r})}{r_\alpha} &= 0 \qquad \text{ for any } \alpha. 
\end{align}
By~\eqref{eq:fill-diagonal3v} and the partition of unity $\sum_{\abs{\alpha} = k} b_\alpha=1$ we have
\begin{align*}
	\sum_{\abs{\alpha} = k} \int_{\That} b_\alpha \abs{r_\alpha - \bar{r}}^2\dx
	&= \sum_{\abs{\alpha} = k} \skpT{b_\alpha r_\alpha}{r_\alpha} - 2\,\sum_{\abs{\alpha} = k} \skpT{b_\alpha \bar{r}}{r_\alpha} + \sum_{\abs{\alpha} = k} \skpT{b_\alpha \bar{r}}{\bar{r}}
	\\
	&= 
	 \skpT{\bar{r}}{\bar{r}}- \sum_{\abs{\alpha} = k} \skpT{b_\alpha r_\alpha}{r_\alpha}.
\end{align*}
Jensen's inequality and $\sum_{\abs{\alpha}=k} b_\alpha =1$ lead to the upper bound
\begin{align*}
	\skpT{\bar{r}}{\bar{r}}
	&= \int_{\That} \biggabs{\sum_{\abs{\alpha} = k} b_\alpha r_\alpha}^2\dx
	\leq \int_{\That} \sum_{\abs{\alpha} = k} b_\alpha \abs{r_\alpha}^2\dx = \sum_{\abs{\alpha} = k} \skpT{b_\alpha r_\alpha}{r_\alpha}.
\end{align*}
Applying this estimate in the previous equation shows that
\begin{align}
	\sum_{\abs{\alpha} = k} \int_{\That} b_\alpha \abs{r_\alpha - \bar{r}}^2\dx
	&\leq 0.\label{eq:fill-diagonal}
\end{align}
Since the Bernstein polynomials $b_\alpha$ are positive except of  set of zero measure, the upper bound in~\eqref{eq:fill-diagonal} yields $r_\alpha = \bar{r}$ for all~$\alpha$. 
Hence, the kernel of $S$ reads
\begin{align*}
	\ker(S) = \set{(r_{\alpha})_{\abs{\alpha} = k} \in (\mathcal{P}_k(\widehat{T}))^N \colon r_{\alpha} = r \in \mathcal{P}_k(\widehat{T}) \; \text{ for all } \alpha \text{ with } \abs{\alpha} = k}.
\end{align*}
The restriction of $S$ to $(\mathcal{P}_k(\widehat{T}))^N / \ker(S)$ is bijective on its image. We claim that the image $S(U)$ consists of all non-diagonal entries, that is, 
\begin{align}\label{eq:claim}
\mathbb{R}^{N\times N} = S(U) \oplus R \quad \text{with }R \coloneqq \lbrace (M_{\alpha,\beta})_{\alpha,\beta} \in \mathbb{R}^{N\times N}\colon M_{\alpha,\beta} = 0\text{ for }\alpha \neq \beta\rbrace.
\end{align}
We verify the claim as follows. Similar to the calculations above, we conclude for all polynomials $(r_\alpha)_{|\alpha| = k} \in (\mathcal{P}_k(\That))^N$ that
  \begin{align}
    \label{eq:goal3}
    \begin{aligned}
       \skpT{b_\alpha (r_\alpha- \bar{r})}{b_\alpha} &= \skpT{\bar{r}}{b_\alpha} - \sum_{\mu \neq \alpha} \skpT{b_\mu r_\mu}{b_\alpha} - \skpT{b_\alpha\bar{r}}{b_\alpha}\\
      &= \sum_{\mu \neq \alpha} \skpT{b_\mu(\bar{r} - r_\mu)}{b_\alpha}.
    \end{aligned}
  \end{align}
  Since the sum in the right-hand side of \eqref{eq:goal3} contains solely off-diagonal entries of $S((r_\alpha)_{|\alpha|=k})$, it vanishes by definition for all $S((r_\alpha)_{|\alpha|=k}) \in R$.
Hence, the left-hand side equals zero and so $S((r_\alpha)_{|\alpha|=k}) = 0$. This implies $S(U) \cap R = \lbrace 0 \rbrace$. The bijectivity of $S$ with respect to $U$ shows that $\dim(\mathbb{R}^{N\times N}) - \dim(S(U)) = N^2 -(N^2 - N) = N = \dim(R)$. This verifies the claim in \eqref{eq:claim}.
Thus, we can find polynomials~$(q_\alpha)_{\abs{\alpha} = k} \in (\mathcal{P}_k(\That))^N$ with 
  \begin{align}
    \label{eq:goal2}
    \skpT{b_\alpha (q_\alpha- \bar{q})}{b_\beta} = 
    - \frac{(d+k)!}{k!} \skpT{b_\alpha}{b_\beta} \qquad \text{for all $\alpha \neq \beta$}.
  \end{align}
  With the calculation in \eqref{eq:goal3}, we recover for all $\beta$ the diagonal case from~\eqref{eq:goal2} by
  \begin{align*}
    \begin{aligned}
      & \skpT{b_\beta (q_\beta- \bar{q})}{b_\beta} = \sum_{\mu \neq \beta} \skpT{b_\mu(\bar{q} - q_\mu)}{b_\beta}
      = \sum_{\mu \neq \beta} \frac{(d+k)!}{k!} \skpT{b_\mu}{b_\beta}
      \\
      &\qquad= \frac{(d+k)!}{k!} \skpT{1}{b_\mu} - \frac{(d+k)!}{k!} \skpT{b_\beta}{b_\beta}
      = 1 - \frac{(d+k)!}{k!} \skpT{b_\beta}{b_\beta}.
    \end{aligned}
  \end{align*}
This proves the existence of functions $(q_{\alpha})_{\abs{\alpha} = k }$ satisfying \eqref{eq:goal1}. 
The construction is symmetric with respect to any permutation of the barycentric coordinates which ensures~\ref{itm:local-symmetry}.
\end{proof}

\begin{remark}[Comparison with Tantardini] \label{rmk:TV-reform}
 In the linear case $k = 1$ Tantardini constructed
  in Section~7.6 of her PhD thesis \cite{Tantardini12} a Scott--Zhang type operator with weights $\psi_\ell =\abs{\That}\, \abs{\omega_\ell}^{-1} p_\ell$, for $\ell \in \vertices$, where the function $p_\ell$ reads locally
  \begin{align*}
    \frac{p_\ell}{(d+1)!}
    &=  \lambda_\ell \Bigg(
      \lambda_\ell^2 
      - \frac{(d+2)(d+5)}{2} \sum_{j \neq \ell} \lambda_j^2
      + \frac{d^2+7d+16}{2}\lambda_\ell \sum_{j \neq \ell} \lambda_j
      + 2\,\sum_{\substack{i,j \\ i < j,i \neq \ell, j \neq \ell}} \lambda_i \lambda_j \bigg).
  \end{align*}
  Since $
    1 = 
    \sum_\ell \lambda_\ell^2 +  \sum_{j, \ell \colon j<\ell} 2 \lambda_j \lambda_\ell$,
  this can be rewritten as
  \begin{align*}
    \frac{p_\ell}{(d+1)!}
    &= \lambda_\ell \Bigg( 1 
      - \Big(\frac{(d+2)(d+5)}{2} +1\Big) \sum_{j \neq \ell} \lambda_j^2
      + \Big(\frac{d^2+7d+16}{2}-2\Big)\lambda_\ell \sum_{j \neq \ell} \lambda_j
      \Bigg)
    \\
    &= \lambda_\ell \Bigg( 1 
      - \frac{(d+3)(d+4)}{2} \sum_{j \neq \ell} \lambda_j^2
      + \frac{(d+3)(d+4)}{2}\lambda_\ell \sum_{j \neq \ell} \lambda_j
      \Bigg)
    \\
    &= \lambda_\ell \Bigg( 1 
      - \frac{(d+3)(d+4)}{2} \sum_j \lambda_j^2
      + \frac{(d+3)(d+4)}{2}\lambda_\ell \sum_j \lambda_j
      \Bigg)
    \\
    &= \lambda_\ell \Bigg( 1
      + \frac{(d+3)(d+4)}{2}\Big( \lambda_\ell - \sum_j \lambda_j^2 \Big)
      \bigg).
  \end{align*}
  We obtain~\eqref{eq:ansatz-p} with $c_1^{-1} = (d+1)!$ and
  \begin{align*}
    q_{e_\ell} & = \frac{(d+4)!}{2(d+2)} \lambda_\ell.
  \end{align*}
This representation of Tantardini's operator for~$k=1$ inspired our ansatz in~\eqref{eq:ansatz-p}.
\end{remark}

\subsection{Global biorthogonal basis}\label{sec:global-basis}
In the following we introduce a system of functions that is biorthogonal to Bernstein polynomials. It can be viewed as a basis of the spanned subspace of $\mathcal{L}^1_{3k}(\tria)$, which is dual to the Bernstein basis of $\mathcal{L}^1_{k}(\tria)$.

\begin{proposition}[Global dual basis]\label{pro:localWeights}
  There exist polynomials $\psi_i \in \mathcal{L}^1_{3k}(\tria)$ for all Lagrange nodes $i\in \nodes$ with the following properties.
  \begin{enumerate}
  \item \label{itm:product-structure}\textbf{Product structure.} For each $\psi_i$ there exists a  $\zeta_i \in \mathcal{L}^1_{2k}(\omega_i)$ such that
    \begin{align*}
      \psi_i = b_i \zeta_i.
    \end{align*}
    In particular the support is local in the sense that $\support \psi_i \subset \support b_i$.
  \item\label{itm:dualBasis} \textbf{Biorthogonality.} We have for all nodes $i,j\in \nodes$ the relation 
    \begin{align*}
      \skp{b_i}{\psi_j}_\Omega = \delta_{i,j}.
    \end{align*}
  \item \label{itm:Boundedness} \textbf{Boundedness.} We have the upper bounds 
    \begin{align*}
      \norm{ \psi_i }_{L^{\infty}(\omega_i)} \lesssim \abs{\omega_i}^{-1} ,\quad \norm{ \nabla \psi_i }_{L^{\infty}(\omega_i)} \lesssim \textup{diam}(\omega_i)^{-1} \abs{\omega_i}^{-1}.
    \end{align*}
    The hidden constant in the first estimate depends on $k$ and the one in the second estimate additionally on the shape regularity of $\tria$.
  \item\label{itm:preserv-mass} \textbf{Preservation of mass.} We have the identity
    \begin{align*}
      \sum_{i \in \nodes}\langle 1,b_i\rangle_\Omega \,\psi_i = 1.
    \end{align*}
  \end{enumerate}
\end{proposition}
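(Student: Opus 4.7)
The plan is to lift the local biorthogonal polynomials from Lemma~\ref{lem:local-basis} to each patch $\omega_i$ by affine pullback, and to verify that the resulting pieces glue continuously across interior faces of $\omega_i$. Let $F_T\colon \That \to T$ denote the canonical affine identification of barycentric coordinates. For each $i \in \nodes$, I set $\psi_i \equiv 0$ outside $\omega_i$ and
\begin{align*}
  \psi_i|_T \coloneqq \frac{1}{d!\,\abs{\omega_i}}\, p_{\alpha(i,T)} \circ F_T^{-1},
  \qquad
  \zeta_i|_T \coloneqq \frac{1}{d!\,\abs{\omega_i}}\, z_{\alpha(i,T)} \circ F_T^{-1}
\end{align*}
for each $T \in \tria(\omega_i)$. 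Item~\ref{itm:local-product} of Lemma~\ref{lem:local-basis} then gives $\psi_i = b_i \zeta_i$ on each~$T$, which yields~\ref{itm:product-structure} as soon as continuity is established.

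The continuity verification is the main obstacle, because the definition is local on each simplex. Across an interior face $F = T_1 \cap T_2 \subset \omega_i$, I distinguish two cases. If $i \notin F$, then the entries of $\alpha(i,T_j)$ corresponding to the vertices of $F$ vanish, hence $b_{\alpha(i,T_j)}|_F = 0$ and $\psi_i|_F = 0$ from both sides. If $i \in F$, I match the $d$ shared vertices on $T_1$ and $T_2$; then $\alpha(i,T_1)$ and $\alpha(i,T_2)$ have equal entries on these shared vertices and a zero entry at the remaining one. The permutation symmetry in Lemma~\ref{lem:local-basis}\ref{itm:local-symmetry} implies that $p_{\alpha(i,T_j)}|_F$ and $z_{\alpha(i,T_j)}|_F$ depend only on this common data, so the two traces agree. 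Hence $\psi_i \in \mathcal{L}^1_{3k}(\tria)$ and $\zeta_i \in \mathcal{L}^1_{2k}(\omega_i)$.

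The remaining three items are then obtained by transporting the properties of Lemma~\ref{lem:local-basis} to each $T$ via a single change of variables with Jacobian $\abs{\det DF_T} = d!\,\abs{T}$. For biorthogonality~\ref{itm:dualBasis} the integrand vanishes on any $T \not\subset \omega_i \cap \omega_j$, and on a common $T$ the duality property yields $\int_T b_i \psi_j \dx = \abs{T}\,\abs{\omega_j}^{-1} \delta_{i,j}$; summation over $T \subset \omega_j$ produces $\delta_{i,j}$. For boundedness~\ref{itm:Boundedness}, the finitely many reference polynomials $p_\alpha$ and $\nabla p_\alpha$ are bounded on $\That$ by constants depending only on $k$ and $d$, and shape regularity gives $\norm{DF_T^{-1}} \lesssim \textup{diam}(\omega_i)^{-1}$, so the chain rule yields both pointwise estimates. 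For mass preservation~\ref{itm:preserv-mass}, formula~\eqref{eq:ck} combined with summation over $T \subset \omega_i$ gives $\langle 1, b_i \rangle_\Omega = c_k\, d!\,\abs{\omega_i}$; since the nodes on a fixed $T$ are in bijection with the multi-indices $\abs{\alpha}=k$, Lemma~\ref{lem:local-basis}\ref{itm:local-one} implies on each such~$T$
\begin{align*}
  \sum_{i \in \nodes}\langle 1, b_i\rangle_\Omega\, \psi_i \Big|_T
  = c_k \sum_{\abs{\alpha}=k} p_\alpha \circ F_T^{-1} = 1,
\end{align*}
which completes the plan.
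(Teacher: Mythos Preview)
Your proof is correct and follows the same construction as the paper: define $\psi_i$ on each $T\in\tria(\omega_i)$ as the scaled pullback $\tfrac{|\That|}{|\omega_i|}\,p_{\alpha(i,T)}$, invoke the permutation symmetry from Lemma~\ref{lem:local-basis}\ref{itm:local-symmetry} for continuity across interior faces, and read off the remaining items by change of variables. One small remark: for an interior face $F=T_1\cap T_2$ of $\omega_i$ one always has $i\in T_1\cap T_2=F$ by mesh conformity, so your first case is vacuous (and its stated justification is not quite right---$i\notin F$ would only force the entry of $\alpha$ at the vertex \emph{opposite} to $F$ to be nonzero, not the entries at the vertices of $F$ to vanish); the continuous extension by zero across $\partial\omega_i$ follows instead directly from $\psi_i=b_i\zeta_i$ and $b_i|_{\partial\omega_i}=0$.
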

\begin{proof}
  Let us consider an arbitrary but fixed $T \in \tria$. 
  The restriction of each global $b_i$ with $i \in \nodes \cap T$ to $T$ is given by $b_{\alpha(i,T)}$, where $\alpha(i,T) \in \mathbb{N}_0^d$ with $\abs{\alpha} = k$, see~\eqref{eq:def-Bernstein}. 
  Let $z_\alpha \in \mathcal{P}_{2k}(T)$ and $p_\alpha \in \mathcal{P}_{3k}(T)$ with $p_\alpha = b_\alpha z_\alpha$ denote the local basis functions given by Lemma~\ref{lem:local-basis} transformed from~$\That$ to~$T$.  
  Now, we can locally patch these functions together to obtain global functions $z_i \in \mathcal{L}^1_{2k}(\tria(\omega_i))$ and $p_i \in \mathcal{L}^1_{3k}(\tria(\omega_i))$ with $\tria(\omega_i)\coloneqq \lbrace T'\in \tria \colon T \subset \omega_i\rbrace$ such that $z_i = z_{\alpha(i,T)}$ and $p_i = p_{\alpha(i,T)}$ on each~$T\in \tria(\omega_i)$. 
  The symmetry of the local functions with respect to permutations of the barycentric coordinates due to Lemma~\ref{lem:local-basis} ensures the continuity of the functions on $\omega_i$ in this construction.

  Using the local properties in Lemma~\ref{lem:local-basis} we proceed to prove the properties of the functions $p_i$ and $z_i$. 
  By Lemma~\ref{lem:local-basis}\,\ref{itm:local-product} we have that $p_i =b_iz_i$ and $\support(p_i) \subset \support (b_i) = \omega_i$. 
This allows us to extend $p_i$ by zero to any $T \notin \tria(\omega_i)$ and we obtain that $p_i \in \mathcal{L}^1_{3k}(\tria)$. 
  Using the affine transformation and Lemma~\ref{lem:local-basis}\,\ref{itm:local-duality} shows
    \begin{align}\label{eq:local-dual}
      \skp{b_i}{p_j}_T = \frac{\abs{T}}{\abs{\That}} \delta_{i,j}\qquad\text{for all }i,j  \in \nodes\text{ and }T\in \tria.
    \end{align}
   Lemma~\ref{lem:local-basis}\,\ref{itm:local-one} yields that
    \begin{align}\label{eq:sum-pi}
     \sum_{i \in \nodes }  p_i =    \frac{1}{c_k}.
    \end{align}
  We define the rescaled functions $\psi_i \in \mathcal{L}^1_{3k}(\tria)$ and $\zeta_i \in \mathcal{L}^1_{2k}(\tria(\omega_i))$ by
  \begin{align*}
    \psi_i &\coloneqq \frac{\abs{\That}}{\abs{\omega_i}} p_i \qquad \text{and} \qquad
    \zeta_i \coloneqq \frac{\abs{\That}}{\abs{\omega_i}} z_i.
  \end{align*}
 We have $\psi_i =b_i\zeta_i$ and $\support(\psi_i) \subset \support (b_i) = \omega_i$, which proves~\ref{itm:product-structure}. 
 Furthermore, by~\eqref{eq:local-dual} for all $i,j \in \nodes$ we have that 
 \begin{align*}
   \skp{b_i}{\psi_j}_\Omega = \frac{\abs{\hat{T}}}{\abs{\omega_j}}\skp{b_i}{p_j}_\Omega = \frac{\abs{\hat{T}}}{\abs{\omega_j}}\, \sum_{T \in \tria(\omega_i \cap \omega_j)} \frac{\abs{T}}{\abs{\That}} \delta_{i,j} = \delta_{i,j}.
 \end{align*}
 This proves~\ref{itm:dualBasis}. 
 Further,~\eqref{eq:sum-pi} yields that
 \begin{align*}
   \sum_{i \in \nodes} \skp{1}{b_i}_\Omega\, \psi_i = \sum_{i \in \nodes} \frac{\abs{\hat{T}}}{\abs{\omega_i}} \skp{1}{b_i}_\Omega\, p_i =  c_k \sum_{i \in \nodes}  p_i =  1,
 \end{align*}
 which proves~\ref{itm:preserv-mass}. 
 Finally,  a scaling argument yields the upper bounds in~\ref{itm:Boundedness}.
\end{proof}

\subsection{Construction and proof of main results}\label{subsec:ProofThm1}
With the biorthogonal system of Bernstein polynomials $(b_i)_{i \in \nodes}$ and functions $(\psi_i)_{i \in \nodes}$ from Proposition~\ref{pro:localWeights} at hand we are in the position to design the Scott--Zhang type interpolation operator $\Pizero$.
The construction of~$\Pi$ requires an adaptation close to the boundary and we postpone this to Section~\ref{sec:BoundaryCorr}. 
Recall that $\nodes$ denotes the set of all Lagrange nodes of $\mathcal{L}^1_{k}(\tria)$ and denote by $\nodesint \coloneqq \nodes \setminus \partial \Omega$ the set of interior Lagrange nodes.  
We define the projection operator $\Pizero\colon W^{-1,2}(\Omega) \to \mathcal{L}^1_{k,0}(\tria)$ by
\begin{align}
  \label{eq:def-Pi}
  \Pizero v &\coloneqq \sum_{i \in \nodesint} \skp{v}{\psi_i}_\Omega \,b_i\qquad\text{for }v \in L^2(\Omega).
\end{align}
Thanks to the fact that $\psi_i \in W^{1,\infty}_0(\Omega)$, this operator is defined on the dual space $(W^{1,\infty}_0(\Omega))^*$. 
In particular, $\Pizero \xi$ is defined for any $\xi \in W^{-1,p}(\Omega)$ with $p \in (1,\infty)$. 
The $L^2$-adjoint operator $\Pizero^*\colon L^2(\Omega) \to \textup{span}\lbrace \psi_i\colon i\in \nodesint\rbrace \subset \mathcal{L}^1_{3k,0}(\tria)$ reads
\begin{align}
  \label{eq:def-Pi-dual}
  \Pizero^* v = \sum_{i\in \nodesint} \skp{v}{b_i }_\Omega\, \psi_i\qquad\text{for all }v \in L^2(\Omega). 
\end{align}
Since $b_i \in W^{1,\infty}_0(\Omega)$, this operator is also defined on  $W^{-1,p}(\Omega)$ for any $p\in(1,\infty)$.

\begin{lemma}[Non-negative orders]
  \label{lem:positive-norms}
 The operator $\Pizero$ is a projection. Moreover, $\Pizero$ and $\Pizero^*$ are locally stable in the sense that for all  $p\in [1,\infty]$, for any $0 \leq s \leq k$ and any $T\in \tria$ we have that 
  \begin{align}
    \label{eq:positive-norms1}
    \begin{alignedat}{2}
      \lVert \Pizero v \rVert_{L^p(T)}& \lesssim \lVert v \rVert_{L^p(\omega_T)} &\qquad&\text{for all }v\in L^p(\Omega),
      \\
      \lVert \nabla^s\Pizero w \rVert_{L^p(T)}& \lesssim \lVert \nabla^s w \rVert_{L^p(\omega_T)}&&\text{for all }w\in W^{1,p}_0(\Omega)\cap W^{s,p}(\Omega),
      \\
      \lVert \Pizero^* v \rVert_{L^p(T)}& \lesssim \lVert v \rVert_{L^p(\omega_T)}&\qquad&\text{for all }v\in L^p(\Omega),\\
      \lVert  \nabla \Pizero^* w \rVert_{L^p(T)}& \lesssim \lVert \nabla w \rVert_{L^p(\omega_T)} &&\text{for all }w\in W^{1,p}_0(\Omega).
    \end{alignedat}
  \end{align}
  In addition, $\Pizero$ and $\Pizero^*$ have the following local approximation properties for all $p \in[1,\infty]$, all $T \in \tria$, and all $w\in W_0^{1,p}(\Omega)\cap W^{s,p}(\Omega)$:
  \begin{align}
    \label{eq:positive-norms2}
    \begin{alignedat}{2}
      \norm{\nabla^m (w - \Pizero w)}_{L^p(T)} &\lesssim h_T^{s-m} \norm{ \nabla^s w }_{L^p(\omega_T)}&\quad&\text{for all $0 \leq m \leq s \leq k+1$},
      \\
      \lVert \nabla^m(w - \Pizero^*w )\rVert_{L^p(T)}& \lesssim h_T^{1-m} \lVert \nabla w \rVert_{L^p(\omega_T)}  &\quad&\text{for $m=0,1$.}
    \end{alignedat}
  \end{align}
  The adjoint operator preserves constants on interior simplices, that is,
  \begin{align}
    \label{eq:positive-norms3}
    (\Pizero^*1)|_T = 1 \qquad \text{ for all }\, T\in \tria \text{ with }T\cap \partial \Omega = \emptyset.
  \end{align}
\end{lemma}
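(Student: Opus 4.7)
All four statements will be derived from the structural properties collected in Proposition~\ref{pro:localWeights}, combined with a standard Bramble--Hilbert framework.

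First I would dispose of the projection property and~\eqref{eq:positive-norms3} together, since both follow directly from Proposition~\ref{pro:localWeights}. Any $v \in \mathcal{L}^1_{k,0}(\tria)$ has a unique representation $v = \sum_{j \in \nodesint} \alpha_j b_j$, and the biorthogonality~\ref{itm:dualBasis} gives $\skp{v}{\psi_i}_\Omega = \alpha_i$ for $i \in \nodesint$, hence $\Pizero v = v$. For the adjoint's preservation of constants, observe that on an interior $T$ the only nodes $i \in \nodes$ contributing to $\psi_i|_T$ are those with $i \in T$, and all such nodes are interior when $T \cap \partial \Omega = \emptyset$; therefore, by the mass identity~\ref{itm:preserv-mass},
\begin{align*}
  (\Pizero^*1)|_T = \sum_{\substack{i \in \nodesint\\ i \in T}} \skp{1}{b_i}_\Omega\, \psi_i\big|_T
  = \bigg(\sum_{i \in \nodes} \skp{1}{b_i}_\Omega \psi_i\bigg)\bigg|_T = 1.
\end{align*}

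Next, the $L^p$-stability bounds in~\eqref{eq:positive-norms1} are immediate from Hölder's inequality and the pointwise bounds in Proposition~\ref{pro:localWeights}\ref{itm:Boundedness}: one has $\abs{\skp{v}{\psi_i}_\Omega} \lesssim \norm{v}_{L^p(\omega_i)} \abs{\omega_i}^{-1/p}$, which together with $0 \leq b_i \leq 1$, $\norm{b_i}_{L^p(T)} \lesssim \abs{T}^{1/p} \lesssim \abs{\omega_i}^{1/p}$ and finite overlap of patches yields $\norm{\Pizero v}_{L^p(T)} \lesssim \norm{v}_{L^p(\omega_T)}$. The identical computation with the roles of $b_i$ and $\psi_i$ exchanged gives the $L^p$-stability of $\Pizero^*$.

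For the approximation bounds~\eqref{eq:positive-norms2} I would use the classical Scott--Zhang decomposition $w - \Pizero w = (w - q) - \Pizero(w - q)$ with $q \in \mathcal{L}^1_{k,0}(\tria)$ chosen so that $\norm{w - q}_{L^p(\omega_T)} \lesssim h_T^s \norm{\nabla^s w}_{L^p(\omega_T)}$; the case $m = 0$ then follows from $L^p$-stability, and the cases $m \geq 1$ from an inverse estimate applied to the discrete function $q - \Pizero w$. If $T$ is an interior element, then every $q \in \mathcal{P}_{s-1}(\omega_T)$ belongs locally to $\mathcal{L}^1_{k,0}(\tria)$ restricted to the patch, so a Taylor polynomial supplies such $q$ and Bramble--Hilbert closes the argument. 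If $T$ meets $\partial\Omega$, we use the zero trace of $w$: extending $w$ by zero to $\tilde w \in W^{1,p}(\setR^d)$, approximating $\tilde w$ by a polynomial on an enlarged patch and subtracting a boundary correction (controlled by Friedrichs' inequality applied to thin strips along $\partial\Omega \cap \omega_T$) yields an admissible $q \in \mathcal{L}^1_{k,0}(\tria)$. The gradient stability in the last two lines of~\eqref{eq:positive-norms1} then follows from~\eqref{eq:positive-norms2} at $m = s$ by the triangle inequality. For $\Pizero^*$ the same template applies, but now the preservation of constants~\eqref{eq:positive-norms3} plays the role of polynomial reproduction, which is why the adjoint only achieves first-order approximation.

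The main obstacle will be the boundary correction step for orders $s \geq 2$: on elements touching $\partial\Omega$ the operator $\Pizero$ reproduces only those polynomials whose prolongation to $\mathcal{L}^1_k(\tria)$ sits in $\mathcal{L}^1_{k,0}(\tria)$, so the higher-order approximation order has to be recovered by exploiting the vanishing of $w \in W^{1,p}_0(\Omega)$ on the boundary via iterated Friedrichs-type estimates. Once this boundary bookkeeping is in place, the remaining arguments are routine.
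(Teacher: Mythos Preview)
Your proposal is correct and follows essentially the same route as the paper: biorthogonality gives the projection property; the mass identity from Proposition~\ref{pro:localWeights}\ref{itm:preserv-mass} gives~\eqref{eq:positive-norms3}; H\"older plus the pointwise bounds~\ref{itm:Boundedness} give local $L^p$-stability for both $\Pizero$ and $\Pizero^*$; and the approximation estimates follow from stability, an inverse estimate, and polynomial reproduction (constants, for $\Pizero^*$).

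The one place where you do more work than the paper is the approximation step for~$\Pizero$. The paper simply writes
\[
  \norm{\nabla^m(w-\Pizero w)}_{L^p(T)}
  \lesssim \inf_{w_h \in \mathcal{L}^1_{k,0}(\tria)}
  \big(\norm{\nabla^m(w-w_h)}_{L^p(T)} + h_T^{-m}\norm{w-w_h}_{L^p(\omega_T)}\big)
  \lesssim h_T^{s-m}\norm{\nabla^s w}_{L^p(\omega_T)},
\]
and the last inequality is just the known best-approximation rate of $\mathcal{L}^1_{k,0}(\tria)$ for $w\in W^{1,p}_0(\Omega)$ (e.g.\ supplied by the original Scott--Zhang operator). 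You instead try to build the comparison polynomial~$q$ by hand, which forces you to confront the boundary case explicitly and prompts your ``main obstacle'' paragraph. That paragraph is accurate---iterated Friedrichs-type arguments near $\partial\Omega$ are indeed how one proves the cited best-approximation result from scratch---but it is not needed here: you can simply invoke it. Also note that your dichotomy should be ``$\omega_T\cap\partial\Omega=\emptyset$'' versus ``$\omega_T$ meets $\partial\Omega$'', not ``$T$ interior'' versus ``$T$ on the boundary'', since $\Pizero|_T$ sees all of~$\omega_T$.
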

\begin{proof}
  Due to the biorthogonality of $(b_i)_{i \in \nodes}$ and $(\psi_i)_{i \in \nodes}$ by Proposition~\ref{pro:localWeights} the operators~$\Pizero$ and~$\Pizero^*$ are linear projections.
  By Proposition~\ref{pro:localWeights}\,\ref{itm:preserv-mass} we have
  \begin{align*}
    \Pizero^* 1 &= \sum_{i \in \nodesint} \skp{1}{b_i}_\Omega\, \psi_i = 1 - \sum_{i \in \nodes \setminus\nodesint} \skp{1}{b_i}_\Omega\, \psi_i.
  \end{align*}
  In particular, this shows~\eqref{eq:positive-norms3}.   
  From Proposition~\ref{pro:localWeights}\,\ref{itm:Boundedness} we conclude
  \begin{align}
    \label{eq:1}
    \norm{\psi_i}_{L^{p'}(\omega_i)}  \norm{b_i}_{L^p(\omega_i)} \lesssim 1\qquad\text{for all }p \in [1,\infty].
  \end{align}
  This implies the local $L^p$-stability of~$\Pizero$ for $p\in[1,\infty)$ as
  \begin{align*}
    \norm{ \Pizero v }_{L^p(T)}
    &\lesssim \bigg(\sum_{i \in \nodes \cap T} \big(\norm{v}_{L^p(\omega_i)} \norm{\psi_i}_{L^{p'}(\omega_i)}  \norm{b_i}_{L^p(T)} \big)^p \bigg)^{\frac 1p}
    \lesssim \norm{v}_{L^p(\omega_T)},
  \end{align*}
and for $p=\infty$ analogously. 
  The same proof applies to the local $L^p$-stability of~$\Pizero^*$. The projection property of~$\Pizero$ on~$\mathcal{L}^1_{k,0}(\tria)$ and inverse estimates yield
  \begin{align*}
    \norm{\nabla^m(w-\Pizero w)}_{L^p(T)}
    &\lesssim \inf_{w_h \in \mathcal{L}^1_{k,0}(\tria) } \big( 
      \norm{\nabla^m(w-w_h)}_{L^p(T)} + h_T^{-m}
      \norm{\Pizero(w-w_h)}_{L^p(T)} \big)
    \\
    &\lesssim \inf_{w_h \in \mathcal{L}^1_{k,0}(\tria) } \big( 
      \norm{\nabla^m(w-w_h)}_{L^p(T)} + h_T^{-m}
      \norm{w-w_h}_{L^p(\omega_T)} \big)
    \\
    &\lesssim h_T^{s-m} \norm{ \nabla^s w }_{L^p(\omega_T)}\qquad\text{for all }0 \leq m \leq s \leq k+1.
  \end{align*}
  This proves the local approximation estimate of~$\Pizero$ in~\eqref{eq:positive-norms2} and, taking $m = s$, the local stability in~\eqref{eq:positive-norms1}. 
  We continue to consider~$\Pizero^*$. 
  We define $c_T \coloneqq \abs{\omega_T}^{-1} \int_{\omega_T} w \dx$ for $T \cap \partial \Omega = \emptyset$ and $c_T \coloneqq 0$ for $T \cap \partial \Omega \neq \emptyset$. 
  Since $\Pizero^* c_T = c_T$ in both cases, inverse estimates and Friedrichs'/\Poincare's inequality imply that
  \begin{align*}
    \norm{\nabla^m(w-\Pizero^* w)}_{L^p(T)}
    &\lesssim 
      \norm{\nabla^m(w-c_T)}_{L^p(T)} + h_T^{-m}
      \norm{\Pizero^*(w-c_T)}_{L^p(T)}
    \\
    &\lesssim \norm{\nabla^m( w-c_T)}_{L^p(T)} + h_T^{-m}
      \norm{w-c_T}_{L^p(\omega_T)}
    \\
    &\lesssim h_T^{1-m} \norm{ \nabla w }_{L^p(\omega_T)}.
  \end{align*}
  This proves the local approximation property of~$\Pizero^*$ in~\eqref{eq:positive-norms2}. 
  Taking $m = 1$ also the local Sobolev stability in~\eqref{eq:positive-norms1} follows, which finishes the proof. 
\end{proof}
\begin{remark}[Preservation of constants by the adjoint]
  To conclude optimal convergence rates in dual Sobolev spaces we have to take advantage of approximation properties of the adjoint operator. 
  Such properties would automatically be satisfied by self-adjoint projection operators. 
  However, in Section~\ref{sec:SelfAdjProj} below we argue that our operator cannot be self-adjoint.    
  The weaker condition~\eqref{eq:positive-norms3} cures this lack of self-adjointness. 
  This weaker condition has been introduced for the lowest order case in \cite[Prop.~7.22]{Tantardini12} and has also been employed by 
Tantardini and Veeser in \cite[Sec.~5]{TantardiniVeeser16}. 
  Therein, they introduce  for all polynomial degrees~$k \in \setN$ a local projection with optimal approximability in~$W^{-1,2}(\Omega)$  for functions in $L^2(\Omega)$. 
  However, they use discontinuous weight functions and consequently their operator does not extend to~$W^{-1,2}(\Omega)$-functions.

\end{remark}

\begin{remark}[Preservation of mass]\label{rem:PresMass}
Let the operator $P$ be defined as our operator $\Pizero$ but including the boundary nodes, i.e., 
$P v \coloneqq \sum_{i \in \nodes} \skp{v}{\psi_i}_{\Omega} \, b_i.$
Proposition~\ref{pro:localWeights}\,\ref{itm:preserv-mass} ensures that $\skp{P v}{1}_{\Omega} = \skp{v}{1}_{\Omega}$, and in this sense $P$ preserves the mass.  
By duality this property is equivalent to $P^* 1 = 1$ on $\Omega$. 
\end{remark}

The preservation of constants by the adjoint operator $\Pi_0^*$ in \eqref{eq:positive-norms3} is key in the approximation results of $\Pi_0^*$ in Sobolev norms, see \eqref{eq:positive-norms2}. Consequently, by duality it is the crucial ingredient in the proof of approximation properties of $\Pi_0$ in negative Sobolev norms.

\begin{lemma}[Localization of norms]
  \label{lem:localized-norms}
  For all $p \in (1,\infty)$ and $\xi \in W^{-1,p}(\Omega)$ we have
  \begin{align*}
    \norm{\xi - \Pizero \xi }_{W^{-1,p}(\Omega)} & \eqsim \Big(\sum_{j \in \mathcal{V}} \norm{\xi- \Pizero \xi}_{W^{-1,p}(\omega_j)}^p \Big)^{1/p},
    \\
    \lVert \xi - \Pizero^* \xi \rVert_{W^{-1,p}(\Omega)} &\eqsim \Big(\sum_{j\in \mathcal{V}} \lVert \xi - \Pizero^* \xi  \rVert_{W^{-1,p}(\omega_j)}^p\Big)^{1/p}.
  \end{align*}
\end{lemma}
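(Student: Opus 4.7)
I would prove both equivalences for the pair $(\xi,\Pizero\xi)$; the same argument, with $\Pizero$ and $\Pizero^*$ exchanged, handles $(\xi,\Pizero^*\xi)$.

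\textbf{Upper bound.} Fix a test function $g \in W^{1,p'}_0(\Omega)$ with $\norm{\nabla g}_{L^{p'}(\Omega)}\le 1$. Since $\Pizero$ is an $L^2$-adjoint projection, biorthogonality yields $\skp{\Pizero\xi}{\Pizero^* g}_\Omega = \skp{\xi}{\Pizero^* g}_\Omega$, and therefore the key identity
\[
  \skp{\xi - \Pizero \xi}{g}_\Omega = \skp{\xi - \Pizero \xi}{g - \Pizero^* g}_\Omega.
\]
Let $(\varphi_j)_{j\in\vertices}$ denote the piecewise affine nodal hat functions, a partition of unity with $\support \varphi_j\subset \omega_j$ and $\norm{\nabla\varphi_j}_{L^\infty}\lesssim h_j^{-1}$. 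Because both $g$ and $\Pizero^* g$ vanish on $\partial\Omega$, the truncations $(g-\Pizero^* g)\varphi_j$ belong to $W^{1,p'}_0(\omega_j)$. Combining the product rule with the local approximation bounds
\[
   \norm{g-\Pizero^* g}_{L^{p'}(T)} \lesssim h_T \norm{\nabla g}_{L^{p'}(\omega_T)}, \qquad \norm{\nabla(g-\Pizero^* g)}_{L^{p'}(T)} \lesssim \norm{\nabla g}_{L^{p'}(\omega_T)}
\]
from Lemma~\ref{lem:positive-norms} gives $\norm{\nabla((g-\Pizero^* g)\varphi_j)}_{L^{p'}(\omega_j)}\lesssim \norm{\nabla g}_{L^{p'}(\omega_j^2)}$. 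Decomposing the dual pairing via $\sum_j\varphi_j=1$, estimating each contribution by the local negative norm, and applying H\"older together with finite overlap of the patches $\omega_j^2$ produces the desired upper bound.

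\textbf{Lower bound.} Shape regularity provides a finite coloring $\vertices=\vertices_1\cup\cdots\cup\vertices_N$ such that within each class the patches $\lbrace\omega_j\rbrace_{j\in\vertices_n}$ are pairwise disjoint. For every $j$ pick $g_j\in W^{1,p'}_0(\omega_j)$ with $\norm{\nabla g_j}_{L^{p'}(\omega_j)}\le 1$ realizing $\norm{\xi-\Pizero\xi}_{W^{-1,p}(\omega_j)}$ up to a factor two, and set $c_j\coloneqq\norm{\xi-\Pizero\xi}_{W^{-1,p}(\omega_j)}^{p-1}$. Extending by zero and exploiting disjoint supports within a color, the test function $G_n\coloneqq\sum_{j\in\vertices_n}c_j g_j$ satisfies $\norm{\nabla G_n}_{L^{p'}(\Omega)}^{p'}=\sum_{j\in\vertices_n}c_j^{p'}$. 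Using $(p-1)p'=p$ and testing $\xi-\Pizero\xi$ against the normalized $G_n$ gives
\[
   \Big(\sum_{j\in\vertices_n}\norm{\xi-\Pizero\xi}_{W^{-1,p}(\omega_j)}^p\Big)^{1/p} \lesssim \norm{\xi-\Pizero\xi}_{W^{-1,p}(\Omega)},
\]
and summation over the $N$ colors finishes the lower bound.

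\textbf{Main obstacle.} The nontrivial direction is the upper bound. Splitting $g$ itself by the partition of unity would produce an unabsorbable term $h_j^{-1}\norm{g}_{L^{p'}(\omega_j)}$, since $g$ has no reason to have vanishing mean on $\omega_j$. The replacement of $g$ by $g-\Pizero^* g$ cures this defect, and this step relies on three features of $\Pizero^*$ established in Lemma~\ref{lem:positive-norms}: the weights $\psi_i$ are continuous and vanish on $\partial\Omega$, $\Pizero^*$ is itself a projection, and it furnishes first-order $L^{p'}$-approximation on $W^{1,p'}_0$. This is precisely where classical Scott--Zhang operators, whose weights are discontinuous, would fail.
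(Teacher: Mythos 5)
Your proof is correct and follows essentially the same route as the paper: the upper bound hinges on the same identity $\skp{\xi - \Pizero\xi}{g}_\Omega = \skp{\xi - \Pizero\xi}{g - \Pizero^* g}_\Omega$ (valid by the projection property), the nodal partition of unity, and the local $W^{1,p'}$-approximation property of $\Pizero^*$ from Lemma~\ref{lem:positive-norms}. For the lower bound the paper builds a single global test function $\sum_j s_j w_j$ and invokes finite overlap of the patches directly, whereas you arrive at the same estimate via a finite coloring of $\vertices$ and disjoint supports within each color; these are interchangeable packagings of the same finite-overlap fact, and both give the constant-depending-on-overlap bound.
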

\begin{proof}
  Let $(\phi_j)_{j \in \vertices}$ denote the nodal basis of~$\mathcal{L}^1_1(\tria)$, which forms a partition of unity.
  For all $w \in W^{1,p'}_0(\Omega)$ we use the fact that $\phi_j(w - \Pizero^*w) \in W^{1,p'}_0(\omega_j)$, the approximation property in~\eqref{eq:positive-norms2} and the finite overlap of the patches~$\omega^2_j$ to calculate
  \begin{align*}
    \skp{\xi - \Pizero \xi}{w}_\Omega
    &=
      \skp{\xi - \Pizero \xi}{w - \Pizero^*w}_\Omega
    \\
    &=  \sum_{j \in \vertices} \skp{\xi - \Pizero \xi}{\phi_j(w - \Pizero^*w)}_\Omega
    \\
    &\leq  \sum_{j\in \mathcal{V}} \Vert \xi - \Pizero \xi \rVert_{W^{-1,p}(\omega_j)} \lVert \nabla( \phi_j( w  - \Pizero^* w)) \rVert_{L^{p'}(\omega_j)}
    \\
    &\lesssim \sum_{j\in \mathcal{V}} \Vert \xi - \Pizero \xi \rVert_{W^{-1,p}(\omega_j)} \lVert \nabla w \rVert_{L^{p'}(\omega^2_j)}
    \\
    &\lesssim \Big( \sum_{j\in \mathcal{V}}\Vert \xi - \Pizero \xi \rVert_{W^{-1,p}(\omega_j)}^p\Big)^{1/p} \norm{\nabla w}_{L^{p'}(\Omega)}.
  \end{align*}
This proves
  \begin{align}
    \label{eq:2}
    \norm{\xi - \Pizero \xi}_{W^{-1,p}(\Omega)}
    &\lesssim \Big( \sum_{j\in \mathcal{V}}\norm{ \xi - \Pizero \xi }_{W^{-1,p}(\omega_j)}^p\Big)^{1/p}.
  \end{align}
  The reverse estimate does not rely on the properties of the projection. 
  Indeed, for any $g \in W^{-1,p}(\Omega)$ we find a sequence $(s_j)_j \in \ell^{p'}(\vertices)$ with $\norm{(s_j)_j}_{\ell^{p'}(\vertices)}=1$ and $w_j \in W^{1,p'}_0(\omega_j)$ with $\norm{\nabla w_j}_{L^{p'}(\omega_j)}=1$ such that
  \begin{align*}
    \Big( \sum_{j\in \mathcal{V}}\norm{ g}_{W^{-1,p}(\omega_j)}^p\Big)^{1/p}
    &= \sum_{j\in \mathcal{V}} \norm{ g }_{W^{-1,p}(\omega_j)} s_j
    = \sum_{j\in \mathcal{V}} \skp{ g}{w_j}_{\omega_j}\, s_j.
  \end{align*}
%
  By the fact that $\sum_{j\in \vertices} s_j w_j \in W^{1,p'}_0(\Omega)$ we obtain
  \begin{align*}
    \begin{aligned}      
      &
      \Big( \sum_{j\in \mathcal{V}}\norm{g }_{W^{-1,p}(\omega_j)}^p\Big)^{1/p}
      \leq \norm{g}_{W^{-1,p}(\Omega)} \biggnorm{ \sum_{j \in \vertices} s_j\nabla w_j}_{L^{p'}(\Omega)}.
    \end{aligned}
  \end{align*}
 Since each simplex $T\in \tria$ is covered by $d+1$ nodal patches, we have
 \begin{align*}
 \biggnorm{ \sum_{j \in \vertices} s_j\nabla w_j}_{L^{p'}(\Omega)}^{p'}& = \sum_{T\in \tria} \int_T \Big\lvert \sum_{j\in \vertices} s_j \nabla w_j\Big\rvert^{p'} \dx \leq (d+1)^{p'/p} \sum_{T\in \tria} \sum_{j\in \vertices} \int_T  |s_j \nabla w_j|^{p'}\dx\\
 &= (d+1)^{p'/p} \sum_{j\in \vertices} |s_j|^{p'} \lVert \nabla w_j \rVert_{L^{p'}(\Omega)}^{p'} = (d+1)^{p'/p}.
 \end{align*}
 Combining these estimates shows 
 \begin{align}\label{eq:localization-trivial}
   \Big( \sum_{j\in \mathcal{V}}\norm{g }_{W^{-1,p}(\omega_j)}^p\Big)^{1/p} \leq (d+1)^{1/p} \lVert g \rVert_{W^{-1,p}(\Omega)}.
 \end{align} 
The claim for~$\Pizero$ follows by~\eqref{eq:2} and choosing $g= \xi - \Pizero \xi$ in \eqref{eq:localization-trivial}. 
The statement for~$\Pizero^*$ follows by exchanging the roles of~$\Pizero$ and $\Pizero^*$ in the proof.
\end{proof}
Related localization techniques for negative Sobolev spaces $W^{-1,p}(\Omega)$ applied to residuals in a~posteriori analysis are employed in~\cite{CDN.2012,BMV.2020,KreuzerVeeser21}. 
\begin{lemma}[Negative order]
  \label{lem:negative}
  Let $p \in (1,\infty)$, $j\in \mathcal{V}$, and $0 \leq s \leq k+1$. 
  Then
  \begin{align}
    \label{eq:negative1}
    \begin{alignedat}{2}
      \norm{\xi - \Pizero \xi }_{W^{-1,p}(\omega_j)} & \lesssim   \norm{\xi}_{W^{-1,p}(\omega_j^2)}  &&\text{ for all } \xi \in W^{-1,p}(\Omega),\\
      \norm{v - \Pizero v }_{W^{-1,p}(\omega_j)} & \lesssim   h_j \norm{v}_{L^{p}(\omega_j^2)}  &&\text{ for all } v \in L^{p}(\Omega),\\
      \norm{w - \Pizero w }_{W^{-1,p}(\omega_j)}&\lesssim  h_j^{s+1} \norm{\nabla^s w}_{L^p(\omega^2_j)} 
      \; &&\text{ for all  }w\in W^{1,p}_0(\Omega)\cap W^{s,p}(\Omega),
      \\
      \norm{\xi - \Pizero^* \xi }_{W^{-1,p}(\omega_j)} & \lesssim   \norm{\xi}_{W^{-1,p}(\omega_j^2)}  &\quad&\text{ for all } \xi \in W^{-1,p}(\Omega),
      \\
      \lVert  v- \Pizero^* v \rVert_{W^{-1,p}(\omega_j)} & \lesssim h_j \lVert v \rVert_{L^p(\omega_j^2)} 
      &&\text{ for all }v \in L^{p}(\Omega),\\
      \lVert w - \Pizero^* w \rVert_{W^{-1,p}(\omega_j)} & \lesssim h_j^2 \lVert \nabla w \rVert_{L^p(\omega^2_j)}
      &&\text{ for all }  w\in W^{1,p}_0(\Omega).
    \end{alignedat}
  \end{align}
  Furthermore, for all $\xi \in W^{-1,p}(\Omega)$ one has
  \begin{align}
    \label{eq:negative2}
    \begin{aligned}
      \norm{\Pizero \xi}_{W^{-1,p}(\Omega)} 
      \lesssim \norm{\xi}_{W^{-1,p}(\Omega)}
      \quad \text{and}\quad
      \norm{\Pizero^* \xi}_{W^{-1,p}(\Omega)} 
      \lesssim \norm{\xi}_{W^{-1,p}(\Omega)}.
    \end{aligned}
  \end{align}
\end{lemma}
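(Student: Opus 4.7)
My plan is to derive every estimate in \eqref{eq:negative1} and \eqref{eq:negative2} via duality, based on the two identities
\begin{align*}
  \skp{\xi-\Pizero\xi}{\varphi}_\Omega
  = \skp{\xi}{\varphi-\Pizero^*\varphi}_\Omega
  = \skp{\xi-\Pizero\xi}{\varphi-\Pizero^*\varphi}_\Omega,
\end{align*}
which follow at once from the $L^2$-adjointness of $\Pizero$ and $\Pizero^*$ and from the projection property $\Pizero^2=\Pizero$ (giving $\skp{\xi-\Pizero\xi}{\Pizero^*\varphi}_\Omega = \skp{\Pizero\xi-\Pizero^2\xi}{\varphi}_\Omega = 0$), together with the analogous identities where the roles of $\Pizero$ and $\Pizero^*$ are exchanged. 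A preliminary localization remark is that for any test $\varphi\in W^{1,p'}_0(\omega_j)$ (extended by zero to $\Omega$), only Lagrange nodes $i\in\nodesint\cap\omega_j$ contribute to $\Pizero^*\varphi$, and for those $\support \psi_i\subset \omega_i \subset \omega_j^2$. Hence $\Pizero^*\varphi$ (and symmetrically $\Pizero\varphi$) is supported inside $\omega_j^2$, which localizes the pairings.

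For the two $W^{-1,p}$-stability estimates (the first and fourth lines of \eqref{eq:negative1}) I would use the first identity together with H\"older,
\begin{align*}
  \abs{\skp{\xi}{\varphi-\Pizero^*\varphi}_\Omega}
  \le \norm{\xi}_{W^{-1,p}(\omega_j^2)}\,\norm{\nabla(\varphi-\Pizero^*\varphi)}_{L^{p'}(\omega_j^2)},
\end{align*}
and bound the last factor by $\norm{\nabla\varphi}_{L^{p'}(\omega_j)}$ using the local Sobolev stability of $\Pizero^*$ from Lemma~\ref{lem:positive-norms} (the support of $\varphi$ keeps every extended patch inside $\omega_j$). Taking the supremum over $\varphi$ concludes; the fourth line follows by exchanging $\Pizero$ and $\Pizero^*$.

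For the $L^p$-approximation bounds (second and fifth lines) I keep the first identity but apply H\"older in $L^p$--$L^{p'}$ and use the $L^{p'}$-approximation of $\Pizero^*$ from Lemma~\ref{lem:positive-norms}, gaining the factor $h_j$. For the higher-order approximation for $w\in W^{1,p}_0(\Omega)$ (third and sixth lines) I would instead use the second, commutator-type identity and combine two approximation estimates via H\"older:
\begin{align*}
  \abs{\skp{w-\Pizero w}{\varphi-\Pizero^*\varphi}_\Omega}
  \le \norm{w-\Pizero w}_{L^p(\omega_j^2)}\,\norm{\varphi-\Pizero^*\varphi}_{L^{p'}(\omega_j^2)}
  \lesssim h_j^{s+1}\,\norm{\nabla^s w}_{L^p}\,\norm{\nabla\varphi}_{L^{p'}(\omega_j)}.
\end{align*}
This yields the $h_j^{s+1}$ rate for $0\le s\le k+1$. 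The symmetric argument for $\Pizero^*$ produces at most $h_j^2$, because the approximation of $\Pizero^*$ in Lemma~\ref{lem:positive-norms} stops at one power of $h_j$.

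Finally, the global stability \eqref{eq:negative2} follows by combining the first local estimate with the localization Lemma~\ref{lem:localized-norms} and the trivial bound \eqref{eq:localization-trivial} applied on the enlarged patches $\omega_j^2$, which yields $\sum_{j\in\vertices}\norm{\xi}_{W^{-1,p}(\omega_j^2)}^p\lesssim\norm{\xi}_{W^{-1,p}(\Omega)}^p$; the triangle inequality then concludes. I expect the only subtle point to be the patch bookkeeping: the $\omega_T$ appearing in the positive-order estimates of Lemma~\ref{lem:positive-norms} may reach slightly outside $\omega_j^2$, but because $\varphi$ is supported in $\omega_j$, every extended gradient (or $L^p$) norm of $\varphi$ collapses back to one on $\omega_j$. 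Once this is set up, the entire proof reduces to H\"older combined with the positive-order analysis of Lemma~\ref{lem:positive-norms}.
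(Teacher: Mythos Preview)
Your approach is essentially the paper's: duality against $\varphi\in W^{1,p'}_0(\omega_j)$, the localization $\support(\varphi-\Pizero^*\varphi)\subset\omega_j^2$, and then the positive-order estimates of Lemma~\ref{lem:positive-norms}. The first and fourth lines of~\eqref{eq:negative1} and the global stability~\eqref{eq:negative2} match the paper verbatim. For the second and fifth lines the paper argues slightly differently---it first records the dual Poincar\'e inequality $\norm{v}_{W^{-1,p}(\omega)}\lesssim h_\omega\norm{v}_{L^p(\omega)}$ and combines it with the already-proved first line---but your direct H\"older route is equally valid.

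The one place your route loses sharpness is the third line (and symmetrically the sixth). Your commutator bound forces you to control $\norm{w-\Pizero w}_{L^p(\omega_j^2)}$, and the element-wise approximation of Lemma~\ref{lem:positive-norms} then pushes the right-hand side onto $\bigcup_{T\subset\omega_j^2}\omega_T=\omega_j^3$, not $\omega_j^2$. Your closing remark does not repair this: the support of $\varphi$ collapses the $\varphi$-norms back to $\omega_j$, but it has no effect on the $w$-norms. The paper avoids the enlargement by applying dual Poincar\'e directly on $\omega_j$ before invoking approximation:
\begin{align*}
  \norm{w-\Pizero w}_{W^{-1,p}(\omega_j)}
  \lesssim h_j\,\norm{w-\Pizero w}_{L^p(\omega_j)}
  \lesssim h_j^{s+1}\,\norm{\nabla^s w}_{L^p(\omega_j^2)},
\end{align*}
so that the $L^p$-approximation is only needed on $\omega_j$ and its patch is exactly $\omega_j^2$. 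This is a minor discrepancy (an $\omega_j^3$ version would serve every downstream use), but as written your argument does not deliver the lemma exactly as stated.
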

\begin{proof}
  Let $j\in \mathcal{V}$ and $\xi \in W^{-1,p}(\Omega)$. 
By the locality of~$\Pizero$ and $\Pizero^*$ and~\eqref{eq:positive-norms2} (with $m=0$) for any $g \in W^{1,p'}_0(\omega_j)$, and noting that $g - \Pizero^*g \in W^{1,p'}_0(\omega_j^2)$, we obtain
  \begin{align*}
    \skp{\xi - \Pizero \xi}{g}_{\omega_j}
    &= \skp{\xi }{g- \Pizero^* g}_{\Omega} =
      \skp{\xi}{g - \Pizero^* g}_{\omega_j^2}     
    \\
    & \leq  \norm{\xi}_{W^{-1,p}(\omega_j^2)} \norm{\nabla (g - \Pizero^* g)}_{L^{p'}(\omega_j^2)}\lesssim \norm{\xi}_{W^{-1,p}(\omega^2_j)}
      \norm{\nabla g}_{L^{p'}(\omega_j)}.
  \end{align*}
  Taking the supremum over all~$g \in W^{1,p'}_0(\omega_j)$ with $\norm{\nabla g}_{L^{p'}(\omega_j)}\leq 1$ proves the first statement in~\eqref{eq:negative1}.

  As a consequence of \Poincare{}'s inequality we have $\norm{g}_{L^{p'}(\omega_j)} \lesssim h_j \norm{\nabla g}_{L^{p'}(\omega_j)}$  for all $g \in W^{1,p'}_0(\omega_j)$. By duality we obtain
  \begin{align}
    \label{eq:negative-poincare}
    \norm{v}_{W^{-1,p}(\omega_j)} \lesssim h_j \norm{v}_{L^p(\omega_j)} \quad \text{for all $v \in L^{p}(\omega_j)$.}
  \end{align}
The same holds for~$\omega_j$ replaced by~$\omega_j^2$, so the second claim of~\eqref{eq:negative1} follows from the first one. 
  Applying~\eqref{eq:negative-poincare} with $v \coloneqq w - \Pizero w$ for $w \in W^{1,p}_0(\Omega)$ we obtain with~\eqref{eq:positive-norms2} for $m = 0$ and $s=0,\dots,k+1$ that
  \begin{align*}
    \norm{w - \Pizero w}_{W^{-1,p}(\omega_j)}
    &\lesssim
      h_j\norm{w - \Pizero w}_{L^p(\omega_j)}
      \lesssim  h_j^{s+1} \norm{\nabla^s w}_{L^p(\omega^2_j)} .
  \end{align*}
  This proves the estimates for~$\Pizero$ in~\eqref{eq:negative1}. 
  The ones for~$\Pizero^*$ follow in the same manner.

  The global stability in~\eqref{eq:negative2} follows from the local one by the localization of the norm in Lemma~\ref{lem:localized-norms}. 
  Applying those and the arguments used in~\eqref{eq:localization-trivial} leads to
  \begin{align*}
    \norm{\Pizero \xi}_{W^{-1,p}(\Omega)}
    &\leq
    \norm{\xi - \Pizero \xi}_{W^{-1,p}(\Omega)} + 
      \norm{\xi}_{W^{-1,p}(\Omega)}
    \\
    &\lesssim 
      \Big(\sum_{j \in \mathcal{V}} \norm{\xi- \Pizero \xi}_{W^{-1,p}(\omega_j)}^p \Big)^{1/p}+ 
      \norm{\xi}_{W^{-1,p}(\Omega)}
    \\
    &\lesssim 
      \Big(\sum_{j \in \mathcal{V}} \norm{\xi}_{W^{-1,p}(\omega_j^2)}^p \Big)^{1/p}+ 
      \norm{\xi}_{W^{-1,p}(\Omega)}
    \\
    &\lesssim  \norm{\xi}_{W^{-1,p}(\Omega)}.
  \end{align*}
  The same proof works for~$\Pizero^*$.
\end{proof}

Now Lemmas~\ref{lem:positive-norms}, \ref{lem:localized-norms} and \ref{lem:negative} prove Theorems~\ref{thm:interpolOperator} and \ref{thm:PropPi*}. 
\begin{remark}[AFEM]
  Let $\widehat{\tria}$ be a refinement of $\tria$. Then the local design of $\Pizero\colon L^2(\Omega) \to  \mathcal{L}^1_{k,0}(\tria)$ shows that for all $\hat{v}_h\in \mathcal{L}^1_{k,0}(\widehat{\tria})$ one has 
  \begin{align}\label{eq:identAFEM}
    \Pizero \hat{v}_h = \hat{v}_h\text{ on }T \qquad\text{for all }T\in \widehat{\tria}\cap \tria \text{with } \omega_T \subset \widehat{\tria}\cap\tria.
  \end{align}
  In other words, the restriction $\Pi_0\colon \mathcal{L}^1_{k,0}(\widehat{\tria}) \to \mathcal{L}^1_{k,0}(\tria)$ is the identity on non-refined elements~$T$ that are surrounded by non-refined elements.
  This is vital in the proof of discrete reliability for adaptive finite element schemes; cf.~\cite{CarstensenFeischlPagePraetorius14,CarstensenRabus17,Stevenson07}.
  In certain situations it is possible \cite[Lem.~3.6]{CasconKreuzerNochettoSiebert08} or needed \cite[Lem.~4.3]{DieningKreuzerStevenson16} to have equality in~\eqref{eq:identAFEM} on all unrefined simplices $T\in \widehat{\tria} \cap \tria$. 
  The following example shows however that this requirement is not compatible with the key property $\Pizero^*1= 1$ locally as in~\eqref{eq:positive-norms3} needed for approximation results in~$W^{-1,p}(\Omega)$ as in Theorem~\ref{thm:interpolOperator}.  

  Let $\tria$ be a triangulation of $\Omega = (0,1)$ and let $\widehat{\tria}$ be a refinement that results from the bisection of a single interval $T\in \tria$. 
  Let $\hat{\phi} \in \mathcal{L}^1_1(\widehat{\tria})$ denote the hat function related to the new node $\ell = \textup{mid}(T)$, that is, $\hat{\phi}(i) = 0$ for all nodes $i\in \nodes$ and $\hat{\phi}(\ell) = 1$. 
  Then any operator $\tilde{\Pi}\colon W^{1,p}_0(\Omega) \to \mathcal{L}^1_{1,0}(\tria)$ with
  $\tilde{\Pi} \hat{\varphi} = \hat{\varphi}$ on all $T\in \widehat{\tria} \cap \tria$ must satisfy $ \tilde{\Pi} \hat{\varphi} = 0$. 
  This yields $0 = \langle \tilde{\Pi} \hat{\varphi} , 1 \rangle_\Omega = \langle  \hat{\varphi} , \tilde{\Pi}^* 1 \rangle_\Omega$ and thus shows  that $\tilde{\Pi}^* 1  \neq 1$.
\end{remark}

\subsection{\texorpdfstring{Boundary correction for~$\Pi$}{Boundary correction}}
\label{sec:BoundaryCorr}

Since $\Pizero$ maps to~$\mathcal{L}^1_{k,0}(\tria)$ the stability estimate $\norm{\nabla \Pizero 1}_{L^2(\Omega)} \lesssim \norm{\nabla 1}_{L^2(\Omega)} =0$ cannot hold and requires a modification of our operator. 
In this subsection we show how to obtain the stability and approximation estimates of Theorem~\ref{thm:interpolOperator} for functions without zero trace. 

Let $(\phi_j)_{j \in \vertices}$ denote the nodal basis of~$\mathcal{L}^1_1(\tria)$ and define $\eta \coloneqq \sum_{j \in \verticesint} \phi_j\in \mathcal{L}^1_{1,0}(\tria)$, then $\eta$ is one on all interior elements and zero on the boundary. 
The Riesz representation theorem yields that for any boundary node $i\in \mathcal{N}\setminus \mathcal{N}^\circ$ there exists a function $\rho_i \in \textup{span}\lbrace b_\ell \colon \ell\in \mathcal{N}\cap \omega_i\rbrace \subset \mathcal{L}^1_k(\tria)$ with
\begin{align}
  \langle b_i \eta \rho_i, b_\ell\rangle_\Omega = \langle \psi_i, b_\ell\rangle_\Omega \qquad \text{for all }\ell \in \nodes \cap \omega_i.
\end{align}
This allows us to replace any function~$\psi_i$, which is non-zero on the boundary, by the equivalent weight~$b_i\eta \rho_i\in \mathcal{L}^1_{2k+1,0}(\tria)$. 
Hence, we define 
\begin{align}
  \widetilde{\psi}_i \coloneqq \begin{cases}
    \psi_i&\text{for all }i \in \nodes^\circ \coloneqq \nodes \setminus \partial \Omega,\\
    b_i \eta \rho_i&\text{for all }i \in \nodes \setminus \nodes^\circ.
  \end{cases}
\end{align}
\begin{proposition}[Modified dual basis]\label{pro:localWeightsMod}
  The polynomials $\tpsi_i \in \mathcal{L}^1_{3k,0}(\tria)$ for $i \in \nodes$ have the following properties.
  \begin{enumerate}
  \item \label{itm:product-structureMod}\textbf{Product structure.} 
  For each $\tpsi_i$ there exists a  $\widetilde{\zeta}_i \in \mathcal{L}^1_{2k}(\omega_i)$ such that
    \begin{align*}
      \tpsi_i = b_i \widetilde{\zeta}_i.
    \end{align*}
    In particular the support is local in the sense that $\support \tpsi_i \subset \support b_i$. 
  \item\label{itm:dualBasisMod} \textbf{Biorthogonality.} We have for all nodes $i,j \in \nodes$ the relation 
    \begin{align*}
      \skp{b_i}{\tpsi_j}_\Omega = \delta_{i,j}.
    \end{align*}
  \item \label{itm:BoundednessMod} \textbf{Boundedness.} We have the upper bounds 
    \begin{align*}
      \norm{ \tpsi_i }_{L^{\infty}(\omega_i)} \lesssim \abs{\omega_i}^{-1} ,\quad \norm{ \nabla \tpsi_i }_{L^{\infty}(\omega_i)} \lesssim \textup{diam}(\omega_i)^{-1} \abs{\omega_i}^{-1}.
    \end{align*}
    The hidden constants depend on $k$ and the shape regularity of $\tria$.
  \item\label{itm:preserv-massMod} \textbf{Preservation of mass on interior elements.} We have the identity
    \begin{align*}
      \sum_{i \in \nodes}\langle 1,b_i\rangle_\Omega \, \tpsi_i|_T = 1\qquad\text{for all }T\in \tria\text{ with }T \cap \partial \Omega = \emptyset.
    \end{align*}
  \end{enumerate}
\end{proposition}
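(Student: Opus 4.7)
The plan is to reduce as much as possible to Proposition~\ref{pro:localWeights}. For interior nodes $i \in \nodesint$ we have $\tpsi_i = \psi_i$, so the product structure, biorthogonality, and boundedness are inherited directly from items~\ref{itm:product-structure}--\ref{itm:Boundedness} of Proposition~\ref{pro:localWeights}. The only wrinkle is to check that $\tpsi_i$ really lies in $\mathcal{L}^1_{3k,0}(\tria)$, which for interior $i$ follows from the product structure $\psi_i = b_i\zeta_i$ together with the observation that the Bernstein polynomial $b_i$ associated to an interior Lagrange node vanishes on every face of $\partial\Omega$ (each such face does not contain~$i$).

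The substantive work is for boundary nodes $i \in \nodes\setminus\nodesint$, where $\tpsi_i = b_i\eta\rho_i$. The product structure is immediate with $\widetilde\zeta_i := \eta\rho_i \in \mathcal{L}^1_{k+1}(\omega_i)\subset \mathcal{L}^1_{2k}(\omega_i)$ (using $k \geq 1$), and the zero boundary trace comes from $\eta \in \mathcal{L}^1_{1,0}(\tria)$. For biorthogonality with a boundary node $j$, the defining identity combined with Proposition~\ref{pro:localWeights}\,\ref{itm:dualBasis} yields $\skp{b_\ell}{\tpsi_j}_\Omega = \skp{b_\ell}{\psi_j}_\Omega = \delta_{\ell,j}$ for every $\ell \in \nodes\cap\omega_j$; for $\ell \notin \nodes\cap\omega_j$ the patches $\omega_\ell$ and $\omega_j$ share no full simplex, so the integral vanishes and agrees with $\delta_{\ell,j}=0$. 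Preservation of mass on an interior simplex $T$ is then essentially automatic: every Lagrange node inside such a $T$ lies in $\nodesint$, so $\tpsi_i|_T = \psi_i|_T$ for every contributing $i$, and the identity reduces to Proposition~\ref{pro:localWeights}\,\ref{itm:preserv-mass}.

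The main obstacle I expect is the $L^\infty$ and gradient bound in~\ref{itm:BoundednessMod} for boundary~$i$. Expanding $\rho_i = \sum_m c_m b_m$ over $m \in \nodes\cap\omega_i$, the coefficients satisfy a square linear system with matrix $A_{\ell m} = \skp{b_i\eta b_m}{b_\ell}_\Omega$ and right-hand side $\delta_{i,\ell}$. An affine transformation to a reference patch gives $A \eqsim |\omega_i|\,\hat A$, where $\hat A$ depends only on the shape of $\omega_i$ and on~$k$. The bilinear form $(u,v)\mapsto\int \hat b_i\hat\eta\, u v$ behind $\hat A$ is exactly the inner product invoked by Riesz representation in the construction of $\rho_i$, so $\hat A$ is invertible, and a compactness argument over the finitely many shape-regular reference configurations delivers a uniform bound on $\hat A^{-1}$. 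Hence $|c_m|\lesssim|\omega_i|^{-1}$, and since $0 \leq b_i,\eta \leq 1$ one obtains $|\tpsi_i| \leq |\rho_i| \lesssim |\omega_i|^{-1}$; the gradient bound follows from the product rule together with an inverse estimate on each $T \subset \omega_i$. The delicate part is the uniform positive definiteness of $\hat A$, which rests on $b_i\eta$ being strictly positive on a subset of $\omega_i$ large enough to pin down any element of $\textup{span}\{b_\ell : \ell\in \nodes\cap\omega_i\}$.
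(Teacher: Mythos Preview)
Your proposal is correct and follows exactly the route the paper takes: the paper's proof is the single sentence ``This result follows by Proposition~\ref{pro:localWeights} and scaling arguments,'' and you have spelled out precisely what this means --- inheriting \ref{itm:product-structureMod}--\ref{itm:preserv-massMod} from Proposition~\ref{pro:localWeights} for interior nodes, verifying biorthogonality and product structure directly from the defining relation for boundary nodes, and obtaining the $L^\infty$ bounds by a scaling/compactness argument on the Gram matrix of the weighted inner product. Your observation that the uniform invertibility of $\hat A$ hinges on $b_i\eta$ being positive on a large enough set is exactly the point the paper absorbs into the phrase ``Riesz representation theorem''; you are not missing anything the paper supplies.
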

\begin{proof}
  This result follows by Proposition~\ref{pro:localWeights} and scaling arguments.
\end{proof}
We define the modification of~$\Pizero$ as $\tPi\colon L^2(\Omega)\to \mathcal{L}^1_{k}(\Omega)$ with
\begin{align}\label{eq:ScottZhangMod}
  \tPi v \coloneqq \sum_{i\in\nodes} \langle v,\tpsi_i\rangle_\Omega \,b_i\qquad\text{for all }v \in L^2(\Omega).
\end{align}
In contrast to the definition of~$\Pizero$ the sum includes boundary nodes. 
Again, since $\tpsi_i \in W^{1,\infty}_0(\Omega)$, the operator $\Pi$ extends to an operator on~$W^{-1,p}(\Omega)$ with $p\in (1,\infty)$.
The {$L^2$-adjoint} $\tPi^*\colon L^2(\Omega) \to \mathcal{L}^1_{3k,0}(\tria)$ reads
\begin{align}
  \tPi^* v = \sum_{i\in\nodes} \langle v,b_i\rangle_\Omega \, \tpsi_i\qquad\text{for all }v \in L^{p}(\Omega).
\end{align}
Since the Bernstein polynomials~$b_i$ for $i \in \partial \Omega$ do not have zero boundary traces, 
$\tPi$ does not map onto functions with zero trace. 
Hence, we can extend~$\Pi^*$ only to~$L^1(\Omega)$-functions.
On the other hand, by Proposition~\ref{pro:localWeightsMod}~\ref{itm:dualBasisMod} $\Pi$ is a projection onto $\mathcal{L}^1_k(\tria)$. 
Using this additional property together with properties in Proposition~\ref{pro:localWeightsMod} (which are similar to the ones in Proposition~\ref{pro:localWeights}) in the proofs of Section~\ref{subsec:ProofThm1} leads to the statements for $\Pi$ in Theorems~\ref{thm:interpolOperator} and~\ref{thm:PropPi*}.

\begin{remark}[Zero trace on part of the boundary]
A modification of our design leads to a projection operator onto $\mathcal{L}^1_{k,\Gamma}(\tria) \coloneqq \lbrace v_h \in  \mathcal{L}^1_{k}(\tria) \colon v_h|_\Gamma = 0\rbrace$, where $\Gamma \subset \partial \Omega$ denotes a part of the boundary that is resolved by the triangulation.
The resulting operator has the approximation properties displayed in Theorem~\ref{thm:interpolOperator}, where we can even replace the space $W^{1,q}_0(\Omega)$ by $W^{1,q}_\Gamma(\Omega)\coloneqq \lbrace v\in W^{1,q}(\Omega)\colon v|_\Gamma = 0\rbrace$.
\end{remark}

\section{Alternative operators}\label{sec:AlternativeOp}
In this section we discuss alternative operators defined on $W^{-1,2}(\Omega)$  mapping to $\mathcal{L}^1_{k,0}(\tria)$ that share some of the properties of the projection $\Pizero$. 
We are interested in operators that are self-adjoint, projections, local, and have approximation properties.  
Note that the operator $\Pizero$ has all those properties except of being self-adjoint. 
However, it is not possible to add this property, without giving up one of the others, see Lemma~\ref{lem:noLocSelfAdjProj}. 
In the following sections we present selected operators that satisfy some but not all of the properties mentioned. 

\subsection{Self-adjoint and projection}\label{sec:SelfAdjProj}
We start by noting that there is only one self-adjoint projection onto $\mathcal{L}^1_{k,0}(\tria)$, namely the $L^2$-projection $\Pi_2$ determined by
\begin{align}\label{eq:DefL2proj}
  \langle \Pi_2 v, w_h \rangle_\Omega  = \langle v ,w_h\rangle_\Omega \qquad\text{for all } v \in L^2(\Omega)\text{ and } w_h \in \mathcal{L}_{k,0}^1(\tria).
\end{align}

\begin{lemma}[Uniqueness]\label{lem:noLocSelfAdjProj}
  Every self-adjoint projection $P \colon L^2(\Omega) \to \mathcal{L}^1_{k,0}(\tria)$ is the $L^2$-projection $\Pi_2$.
\end{lemma}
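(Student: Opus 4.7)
The plan is to observe that this is essentially the standard Hilbert space fact that the orthogonal projection onto a closed subspace is the unique self-adjoint idempotent with that range, and to spell it out in the notation of the paper.

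First I would fix an arbitrary $v \in L^2(\Omega)$ and an arbitrary test function $w_h \in \mathcal{L}_{k,0}^1(\tria)$. Since $P$ is a projection onto $\mathcal{L}_{k,0}^1(\tria)$, its restriction to the subspace is the identity, so $P w_h = w_h$. Combining this with the self-adjointness of $P$ gives
\begin{align*}
  \langle P v, w_h \rangle_\Omega
  = \langle v, P w_h \rangle_\Omega
  = \langle v, w_h \rangle_\Omega.
\end{align*}
This is precisely the defining relation \eqref{eq:DefL2proj} of $\Pi_2 v$. Since $P v$ lies in $\mathcal{L}_{k,0}^1(\tria)$ and the $L^2$-projection onto that finite-dimensional subspace is characterized uniquely by this Galerkin-type identity, it follows that $P v = \Pi_2 v$. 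As $v \in L^2(\Omega)$ was arbitrary, $P = \Pi_2$.

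Since everything reduces to the two defining properties (self-adjointness and the projection property) together with the definition of $\Pi_2$, there is no real obstacle; the argument is purely algebraic and does not use any specific feature of $\mathcal{L}_{k,0}^1(\tria)$ beyond its being a subspace of $L^2(\Omega)$. The only thing worth emphasizing in the write-up is that the argument does not require $P$ to be bounded or continuous a priori on any larger space — the statement is entirely internal to $L^2(\Omega)$, which matches the hypothesis in the lemma.
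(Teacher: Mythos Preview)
Your argument is correct and is essentially identical to the paper's own proof: both use self-adjointness together with $P w_h = w_h$ to obtain $\langle P v, w_h\rangle_\Omega = \langle v, w_h\rangle_\Omega$ for all $w_h \in \mathcal{L}^1_{k,0}(\tria)$, which is the defining relation of~$\Pi_2$. The paper's write-up is terser, but the idea and the steps are the same.
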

\begin{proof}
  Let $P \colon L^2(\Omega) \to \mathcal{L}^1_{k,0}(\tria)$ be a self-adjoint projection. 
  We have 
  \begin{align*}
    \skp{P v}{w_h}_\Omega = \skp{v}{P w_h}_\Omega = \skp{v}{w_h}_\Omega \qquad\text{for all } v \in L^2(\Omega), w_h \in  \mathcal{L}^1_{k,0}(\tria).
  \end{align*}
  This defines the $L^2$-projection to $\mathcal{L}^1_{k,0}(\tria)$ and hence we have that $P = \Pi_2$.
\end{proof}
Since $\mathcal{L}^1_{k,0}(\tria) \subset W^{1,p'}_0(\Omega)$ the $L^2$-projection $\Pi_2$ extends to $\xi \in W^{-1,p}(\Omega)$ for any $p\in(1,\infty)$. 
Due to its global definition the $L^2$-projection is not local. 
This means that the support of the projection of a local function is in general global. 
Nevertheless, this operator is useful in some situations. 
For uniform meshes, as well as for certain graded meshes, the $L^2$-projection is $W^{1,p}_0(\Omega)$-stable and thus by duality also stable in $W^{-1,p}(\Omega)$. This includes a wide class of adaptively generated meshes, for example meshes generated by the newest vertex bisection, see \cite{CT.1987,BPS.2002,Ca.2002,GHS.2016,BankYserentant14,DieningStornTscherpel21} for more details. 
Note that $W^{1,2}$-stability fails for general graded meshes, as shown by Bank and Yserentant \cite[Sec.~7]{BankYserentant14}.

\subsection{Self-adjoint and local}\label{sec:SelfAdjLoc}

In this subsection we focus on self-adjoint and local operators. 
Both properties are very useful for the numerical analysis of various problems. 
The lowest order case $k = 1$ is considered for example in \cite{Carstensen99,CarstensenVerfuerth99,BPV.2000,MalqvistPeterseim14} featuring
the operator~$\interC\colon L^1(\Omega) \to \mathcal{L}^1_{1,0}(\tria)$ with
\begin{align}\label{eq:OperatorCC}
  \interC v \coloneqq \sum_{i\in\nodesint} \frac{\langle v ,b_i \rangle_\Omega}{\langle 1,b_i \rangle_\Omega}\,b_i\qquad\text{for all }v \in L^{1}(\Omega).
\end{align}
The operator is $W^{1,2}_0(\Omega)$-stable, and therefore by duality also $W^{-1,2}(\Omega)$-stable. It has the approximation property 
\begin{align*}
  \norm{w - \interC w  }_{L^2(T)} \lesssim h \norm{\nabla w} _{L^2(\omega_T)}\qquad\text{for all }w \in W^{1,2}_0(\Omega)\text{ and }T \in \tria.
\end{align*}
Since $\interC$ is not a projection, it does not preserve linear functions and consequently one has in general a reduced maximal order of convergence in the sense that
\begin{align*}
  \norm{w - \interC w  }_{L^2(T)} 
  \not\lesssim h^2 \norm{\nabla^2 w}_{L^2(\omega_T)}
  \quad\text{for } w\in W^{1,2}_0(\Omega)\cap W^{2,2}(\Omega)\text{ and }T \in \tria.
\end{align*}
The local self-adjoint operator $\interC$ was generalized to polynomial degrees $k\in \mathbb{N}$ in \cite[Eq.~3.4]{DieningStornTscherpel21}, which we discuss in the following. 
We restrict the presentation to the case of zero boundary traces, since in this case the self-adjoint operator extends to a mapping on~$W^{-1,2}(\Omega) = (W^{1,2}_0(\Omega))^*$. 
The design of~$\interC$ for~$k\in \setN$ is based on local weighted projections. 
Let $\vertices$ denote the set of vertices in $\tria$ and let $\phi_j \in \mathcal{L}^1_1(\tria)$ denote the nodal basis function associated to the vertex $j \in \vertices$ and $\omega_j = \support(\varphi_j) \subset \overline{\Omega}$.
For $j\in\mathcal{V}$ and $v_{k-1} \in \mathcal{L}^1_{k-1}(\tria(\omega_j))$ we denote by $\phi_j v_{k-1}\in \mathcal{L}^1_k(\tria)$ the function where we extend $v_{k-1}$ outside of $\omega_j$ by zero.
We set for all $j\in \mathcal{V}$ the space of local Lagrange functions with zero boundary traces on the boundary $\partial \Omega$ by 
\begin{align}\label{def:bdLag}
  \begin{aligned}
    \mathcal{L}^1_{k-1,\partial \Omega}(\tria(\omega_j)) &\coloneqq \set{ v_{k-1} \in \mathcal{L}^1_{k-1}(\tria(\omega_j))\colon \phi_j v_{k-1} \in \mathcal{L}^1_{k,0}(\tria)}.
  \end{aligned}
\end{align}
Let $\interC_j \colon L^2(\omega_j) \to \mathcal{L}^1_{k-1,\partial \Omega}(\tria(\omega_j))$ with 
$j\in \mathcal{V}$ denote the orthogonal projection with respect to the weighted inner product $\langle \bigcdot,\bigcdot\rangle_{\varphi_j} \coloneqq \langle \phi_j \bigcdot,\bigcdot\rangle_{\omega_j}$. 
By the definition in \eqref{def:bdLag} we have $\phi_j \mathcal{L}^1_{k-1,\partial \Omega}(\tria(\omega_j)) \subset \mathcal{L}^1_{k,0}(\tria)$. Hence the mapping $\interC_j$ extends to an operator on $W^{-1,p}(\Omega)$ mapping to $\mathcal{L}^1_{k-1,\partial \Omega}(\tria(\omega_j))$ defined by
\begin{align}\label{eq:DefCz}
  \langle \interC_j \xi,v_{k-1}\rangle_{\varphi_j} = \langle \xi,v_{k-1}\rangle_{\varphi_j}\quad\text{for all }\xi \in W^{-1,p}(\Omega),v_{k-1}\in \mathcal{L}^1_{k-1,\partial \Omega}(\tria(\omega_j)).
\end{align}
The combination of the local operators $\interC_j$ leads to the operator 
\begin{align}\label{eq:DefC}
  \interC\colon W^{-1,p}(\Omega) \to \mathcal{L}^1_{k,0}(\tria) \qquad \text{with}\qquad\interC \coloneqq \sum_{j\in \vertices} \phi_j \interC_j.
\end{align}
Notice that for $k = 1$ this operator coincides with the operator defined in~\eqref{eq:OperatorCC}. 
\begin{lemma}[Properties of $\interC$]\label{lem:propC}
  The operator $\interC$ satisfies the following properties. 
  \begin{enumerate}
  \item \label{itm:Prop1} 
    It is linear and self-adjoint with respect to~$\langle \bigcdot,\bigcdot\rangle_\Omega$.
  \item \label{itm:Prop2}
    It is $L^2$-elliptic on $\mathcal{L}^1_{k,0}(\tria)$ in the sense that for all $v_h \in \mathcal{L}^1_{k,0}(\tria)$
    \begin{align*}
      \frac{k}{2k + d}\, \norm{v_h}^2_{L^2(\Omega)} \leq  \skp{ \interC v_h }{v_h}_{\Omega} \leq \norm{ v_h }^2_{L^2(\Omega)},
      \\
      \frac{k}{2k + d}\, \norm{ v_h}_{L^2(\Omega)} \leq  \norm{ \interC v_h }_{L^2(\Omega)} \leq \norm{ v_h }_{L^2(\Omega)}.
    \end{align*}
  \item \label{itm:Prop2b} One has $\norm{\interC v}_{L^2(\Omega)} \leq \norm{v}_{L^2(\Omega)}$ for all $v \in L^2(\Omega)$.
  \item \label{itm:Prop3}
    It is the identity on $\mathcal{L}^1_{k-1,0}(\tria)$. 
  \item \label{itm:Prop4}
    The difference $\identity -\interC$ is orthogonal on $\mathcal{L}^1_{k-1,0}(\tria)$, that is,
    \begin{align*}
      \langle \xi - \interC \xi, v_{k-1} \rangle_\Omega  = 0 \quad\text{for all }\xi\in W^{-1,p}(\Omega) \text{ and }v_{k-1}\in \mathcal{L}^1_{k-1,0}(\tria).
    \end{align*} 
  \item \label{itm:Prop5}
    It is local in the sense that for all $T\in \tria$ and $v\in L^p(\Omega)$ one has
    \begin{align*}
      \textup{supp}\big(\interC (v\indicator_T)\big) \subset \omega_T.
    \end{align*}
  \item \label{itm:Prop6} It satisfies $\interC \Pi_2 = \interC = \Pi_2 \interC$, where $\Pi_2$ is $L^2$-projection to $\mathcal{L}^1_{k,0}(\tria)$  as in~\eqref{eq:DefL2proj}. 
  \item \label{itm:Prop7} The operator~$\interC$ preserves constants on interior simplices, that is,
    \begin{align*}
      (\interC 1)|_T = 1 \qquad \text{ for all }\, T\in \tria \text{ with }T\cap \partial \Omega = \emptyset.
    \end{align*}
  \end{enumerate} 
\end{lemma}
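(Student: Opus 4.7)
The plan is to exploit systematically that each $\interC_j$ is an orthogonal projection with respect to the weighted inner product $\langle \cdot, \cdot\rangle_{\varphi_j}$, in conjunction with the partition-of-unity identity $\sum_{j \in \vertices} \phi_j \equiv 1$ on $\Omega$. The chain of equalities
\begin{align*}
  \langle \interC \xi, \eta\rangle_\Omega
  &= \sum_{j\in \vertices} \langle \phi_j \interC_j \xi, \eta\rangle_\Omega
  = \sum_{j \in \vertices} \langle \interC_j \xi, \eta\rangle_{\varphi_j}
  = \sum_{j\in\vertices} \langle \interC_j \xi, \interC_j \eta\rangle_{\varphi_j}
\end{align*}
is the main workhorse: the last step applies~\eqref{eq:DefCz} with test function $\interC_j \eta \in \mathcal{L}^1_{k-1,\partial\Omega}(\tria(\omega_j))$. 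Its right-hand side is manifestly symmetric in $\xi$ and $\eta$, which proves~\ref{itm:Prop1}. Taking $\xi = \eta = v_h$ yields $\langle \interC v_h, v_h\rangle_\Omega = \sum_j \norm{\interC_j v_h}^2_{\varphi_j}$; contractivity of each $\interC_j$ together with $\sum_j \norm{v_h}_{\varphi_j}^2 = \norm{v_h}_{L^2(\Omega)}^2$ gives the upper inner-product bound in~\ref{itm:Prop2}. The $L^2$-contractivity~\ref{itm:Prop2b} follows from Jensen's inequality $\lvert \sum_j \phi_j \interC_j v\rvert^2 \leq \sum_j \phi_j \lvert \interC_j v\rvert^2$ after integration, and the upper norm bound in~\ref{itm:Prop2} is then the special case $v = v_h$.

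The remaining items I would treat in the order~\ref{itm:Prop3}, \ref{itm:Prop4}, \ref{itm:Prop7}, \ref{itm:Prop5}, \ref{itm:Prop6}. For~\ref{itm:Prop3}, any $u \in \mathcal{L}^1_{k-1,0}(\tria)$ satisfies $\phi_j u \in \mathcal{L}^1_{k,0}(\tria)$ because $u$ vanishes on $\partial \Omega$; hence $u|_{\omega_j} \in \mathcal{L}^1_{k-1,\partial\Omega}(\tria(\omega_j))$, so $\interC_j u = u|_{\omega_j}$ and $\interC u = \sum_j \phi_j u = u$. Item~\ref{itm:Prop4} is an immediate consequence of \ref{itm:Prop1} and \ref{itm:Prop3}, via $\langle \interC \xi, v_{k-1}\rangle_\Omega = \langle \xi, \interC v_{k-1}\rangle_\Omega = \langle \xi, v_{k-1}\rangle_\Omega$. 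For~\ref{itm:Prop7}, every vertex $j$ of an interior simplex $T$ is itself interior, so $\phi_j \in \mathcal{L}^1_{1,0}(\tria)$ and the constant $1$ lies in $\mathcal{L}^1_{k-1,\partial\Omega}(\tria(\omega_j))$; therefore $\interC_j 1 = 1$ and $(\interC 1)|_T = \sum_{j \in T} \phi_j|_T = 1$. Locality~\ref{itm:Prop5} follows because $T \cap \omega_j$ has positive Lebesgue measure only when $j \in T$, so $\interC_j(v \indicator_T) = 0$ for $j \notin T$; for $j \in T$ one has $\omega_j \subset \omega_T$, which confines the support of $\interC(v\indicator_T)$ to $\omega_T$. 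Property~\ref{itm:Prop6} is then a two-line calculation: $\Pi_2 \interC v = \interC v$ because $\interC v \in \mathcal{L}^1_{k,0}(\tria)$, and for any $w \in L^2(\Omega)$
\begin{align*}
  \langle \interC \Pi_2 v, w\rangle_\Omega = \langle \Pi_2 v, \interC w\rangle_\Omega = \langle v, \interC w\rangle_\Omega = \langle \interC v, w\rangle_\Omega,
\end{align*}
where we used $\interC w \in \mathcal{L}^1_{k,0}(\tria)$ together with self-adjointness of $\interC$ and $\Pi_2$.

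The main obstacle is the sharp lower bound $\tfrac{k}{2k+d}\norm{v_h}_{L^2(\Omega)}^2 \leq \langle \interC v_h, v_h\rangle_\Omega$, which resists soft arguments. My plan is to construct a stable decomposition $v_h = \sum_{j\in \vertices} \phi_j u_j$ with $u_j \in \mathcal{L}^1_{k-1,\partial\Omega}(\tria(\omega_j))$ satisfying $\sum_j \norm{u_j}_{\varphi_j}^2 \leq \tfrac{2k+d}{k} \norm{v_h}_{L^2(\Omega)}^2$. Using~\eqref{eq:DefCz} and Cauchy--Schwarz this yields
\begin{align*}
  \norm{v_h}^2_{L^2(\Omega)}
  = \sum_j \langle v_h, u_j\rangle_{\varphi_j}
  = \sum_j \langle \interC_j v_h, u_j\rangle_{\varphi_j}
  \leq \Big(\sum_j \norm{\interC_j v_h}_{\varphi_j}^2\Big)^{1/2} \Big(\sum_j \norm{u_j}_{\varphi_j}^2\Big)^{1/2},
\end{align*}
from which the coercivity follows. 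The explicit constant $k/(2k+d)$ should emerge from a reference-element calculation with Bernstein polynomials, reducing the statement to a finite-dimensional spectral bound on $\mathcal{P}_k(\That)$. The lower norm bound in~\ref{itm:Prop2} then follows by combining this coercivity with Cauchy--Schwarz: $\tfrac{k}{2k+d}\norm{v_h}_{L^2(\Omega)}^2 \leq \langle \interC v_h, v_h\rangle_\Omega \leq \norm{\interC v_h}_{L^2(\Omega)} \norm{v_h}_{L^2(\Omega)}$.
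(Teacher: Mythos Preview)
Your arguments for \ref{itm:Prop1}, \ref{itm:Prop2b}, the upper bounds in \ref{itm:Prop2}, \ref{itm:Prop3}, \ref{itm:Prop4}, \ref{itm:Prop5}, \ref{itm:Prop6}, and \ref{itm:Prop7} are all correct. Your proof of \ref{itm:Prop7} coincides with the paper's: it is the only item the paper proves directly, observing that for interior vertices $j$ one has $\mathcal{L}^1_{k-1,\partial\Omega}(\tria(\omega_j)) = \mathcal{L}^1_{k-1}(\tria(\omega_j))$, hence $\interC_j 1 = 1$, and then summing $\phi_j$ over the vertices of an interior simplex. For \ref{itm:Prop1}--\ref{itm:Prop6} the paper does not argue at all; it simply cites \cite[Sec.~3]{DieningStornTscherpel21}. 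So in fact you have supplied substantially more than the paper does here, and your identity $\langle \interC \xi,\eta\rangle_\Omega = \sum_j \langle \interC_j \xi,\interC_j \eta\rangle_{\varphi_j}$ is exactly the workhorse used in that reference.

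The one genuine gap is the sharp lower bound $\tfrac{k}{2k+d}\norm{v_h}_{L^2(\Omega)}^2 \le \langle \interC v_h, v_h\rangle_\Omega$, which you correctly identify as the main obstacle. Your stable-decomposition strategy is the right idea and is how \cite{DieningStornTscherpel21} proceeds: one writes $v_h = \sum_j \phi_j u_j$ elementwise via the Bernstein representation (on each simplex, $b_\alpha = \sum_{\ell:\alpha_\ell\ge 1} \tfrac{\alpha_\ell}{k}\,\lambda_\ell\, b_{\alpha - e_\ell}$, giving a canonical choice of $u_j$), and the constant $\tfrac{2k+d}{k}$ falls out of comparing $\sum_j \skp{\phi_j u_j}{u_j}_T$ with $\norm{v_h}_{L^2(T)}^2$ using explicit Bernstein mass-matrix identities. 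What you have written is a plan rather than a proof, so if you want a self-contained argument you will need to carry out this elementwise computation; the paper under review does not, and neither must you if citing the reference is acceptable.
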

\begin{proof}
  The proof of~\ref{itm:Prop1}--\ref{itm:Prop6} is presented in \cite[Sec.~3]{DieningStornTscherpel21}. 
For the proof of~\ref{itm:Prop7} note that $\mathcal{L}^1_{k-1,\partial\Omega}(\tria(\omega_j)) = \mathcal{L}^1_{k-1}(\tria(\omega_j))$ for any $j \in \verticesint$. 
 Then the claim follows from $\interC_j 1 = 1$ for all $j \in \verticesint$ and $(\sum_{j \in \verticesint} \phi_j)|_T =1$ for $T\in \tria$ such that $T \cap \partial \Omega = \emptyset$.
\end{proof}
\begin{theorem}[Interpolation error]\label{thm:interC}
 Let $k \in \mathbb{N}$ be arbitrary.   
  The operator~$\interC$ has the following properties, for any $j\in \mathcal{V}$ and $T\in \tria$: 
  
  \noindent%
  \textbf{\,Localization of norms.} For $p \in (1,\infty)$ 
      there holds
  \begin{align}
       \norm{\xi - \interC \xi }_{W^{-1,p}(\Omega)} & \eqsim \Big(\sum_{j \in \mathcal{V}} \norm{\xi- \interC \xi}_{W^{-1,p}(\omega_j)}^p \Big)^{1/p}\quad\text{for all }\xi \in W^{-1,p}(\Omega).
  \end{align}
  
  \noindent%
  \textbf{\,Approximabilty.} For $p \in (1,\infty)$, $q \in [1,\infty]$ and $0 \leq m \leq s \leq k$ one has
  \begin{align}\label{eq:ApxIlocC}
    \begin{alignedat}{2}
      \norm{v - \interC v }_{W^{-1,p}(\omega_j)} & \lesssim   h_j \norm{v}_{L^{p}(\omega_j^2)}  &&\text{for all } v \in L^{p}(\Omega),\\
      \norm{w - \interC w }_{W^{-1,p}(\omega_j)}&\lesssim  h_j^{s+1} \norm{\nabla^s w}_{L^p(\omega^2_j)} 
      \qquad 
      &&\text{for all } w\in  W^{1,p}_0(\Omega) \cap W^{s,p}(\Omega),
      \\
      \norm{\nabla^m (w - \interC w)}_{L^q(T)} &\lesssim h_T^{s-m} \norm{ \nabla^s w }_{L^q(\omega_T)}
     \qquad &&\text{for all } w \in W^{1,q}_0(\Omega)\cap W^{s,q}(\Omega).
    \end{alignedat}
  \end{align}
  
  \noindent%
  \textbf{\,Local Stability.}  For $p \in (1,\infty)$, $q \in [1,\infty]$ and $0 \leq s \leq k$ there holds
  \begin{align}\label{eq:LocStabC}
    \begin{alignedat}{2}
      \norm{\interC \xi}_{W^{-1,p}(\omega_j)} &\lesssim \norm{\xi}_{W^{-1,p}(\omega_j^2)} &&\text{for all }\xi \in W^{-1,p}(\Omega),
      \\
      \lVert \interC v \rVert_{L^q(T)}& \lesssim \lVert v \rVert_{L^q(\omega_T)} &&\text{for all }v\in L^q(\Omega),
      \\
      \lVert \nabla^s\interC w \rVert_{L^q(T)}& \lesssim \norm{ \nabla^s w }_{L^q(\omega_T)} \qquad &&\text{for all }w\in W^{1,q}_0(\Omega)\cap W^{s,q}(\Omega).
    \end{alignedat}
  \end{align}

  \noindent%
  \textbf{\,Global Stability.}   For $p \in (1,\infty)$, $q \in [1,\infty]$ and $0 \leq s \leq k$ one has
  \begin{align}\label{eq:GlobStabC}
    \begin{alignedat}{2}
      \norm{\interC \xi}_{W^{-1,p}(\Omega)} &\lesssim \norm{\xi}_{W^{-1,p}(\Omega)} &\qquad&\text{for all }\xi \in W^{-1,p}(\Omega),
      \\
      \norm{ \interC v }_{L^q(\Omega)}& \lesssim \lVert v \rVert_{L^q(\Omega)} &&\text{for all }v\in L^q(\Omega),
      \\
      \lVert \nabla^s\interC w \rVert_{L^q(\Omega)}& \lesssim \lVert \nabla^s w \rVert_{L^q(\Omega)}&&\text{for all }w\in W^{1,q}_0(\Omega)\cap W^{s,q}(\Omega).
    \end{alignedat}
  \end{align}
  The hidden constants are independent of $p$ and~$q$.
\end{theorem}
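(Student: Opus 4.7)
The strategy is to transplant the proof architecture of Lemmas~\ref{lem:positive-norms}, \ref{lem:localized-norms} and~\ref{lem:negative} to the operator~$\interC$. The decisive structural simplification is that by Lemma~\ref{lem:propC}\,\ref{itm:Prop1} one has $\interC^* = \interC$, so every property I would otherwise need for the adjoint is automatically inherited from~$\interC$ itself. The price for self-adjointness is the loss of one order of approximation: since $\interC$ is only the identity on $\mathcal{L}^1_{k-1,0}(\tria)$ (Lemma~\ref{lem:propC}\,\ref{itm:Prop3}) rather than on $\mathcal{L}^1_{k,0}(\tria)$, the Bramble--Hilbert step caps the rate at $s \leq k$, which matches the statement.

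For the positive-order local stability~\eqref{eq:LocStabC} in $L^q$ and $W^{1,q}$, I would use the representation $\interC = \sum_{j \in \vertices} \phi_j \interC_j$ together with the $L^2$-stability of each weighted projection $\interC_j$ on the finite-dimensional space $\mathcal{L}^1_{k-1,\partial\Omega}(\tria(\omega_j))$, lifted to general $L^q$ by the norm equivalence on that space and standard scaling, exactly as one establishes~\eqref{eq:1}--\eqref{eq:positive-norms1} for~$\Pizero$. The approximability~\eqref{eq:ApxIlocC} for $0\leq m \leq s \leq k$ then follows from the classical local argument: pick $w_h \in \mathcal{L}^1_{k-1,0}(\tria)$, use $\interC w_h = w_h$, add-and-subtract, apply local stability and an inverse estimate, and conclude by Bramble--Hilbert. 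For the $W^{-1,p}$-side of approximability, I would mimic the proof of Lemma~\ref{lem:negative}: for $g \in W^{1,p'}_0(\omega_j)$, test
\begin{align*}
\skp{\xi - \interC \xi}{g}_{\omega_j} = \skp{\xi}{g - \interC g}_{\omega_j^2}
\end{align*}
(using locality and self-adjointness), then bound $\norm{\nabla(g - \interC g)}_{L^{p'}(\omega_j^2)}$ by the just-proved $m=1$, $s=1$ positive-order estimate, and for the $L^p$- and $W^{1,p}$-to-$W^{-1,p}$ versions apply Poincaré on $\omega_j$ to the identity $v - \interC v$, chaining with the positive-order estimate with $m=0$.

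The localization of norms is essentially formal: the lower bound~\eqref{eq:localization-trivial} does not use any property of the operator, and the upper bound repeats the chain from the proof of Lemma~\ref{lem:localized-norms} verbatim, testing
\begin{align*}
\skp{\xi - \interC \xi}{w}_\Omega = \sum_{j \in \vertices} \skp{\xi - \interC \xi}{\phi_j(w - \interC w)}_\Omega,
\end{align*}
since now $\interC^* = \interC$ plays the role previously played by $\Pizero^*$; the term $\phi_j(w - \interC w)$ again lies in $W^{1,p'}_0(\omega_j)$ and its gradient is controlled on $\omega_j^2$ by the $s=m=1$ case of~\eqref{eq:ApxIlocC}. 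Global stability~\eqref{eq:GlobStabC} in $L^q$ and $W^{1,q}$ follows by summing the local bounds over the finite overlap of patches, and in $W^{-1,p}$ by combining localization with the local $W^{-1,p}$-approximability exactly as at the end of Lemma~\ref{lem:negative}.

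The main obstacle I anticipate is the $m=0$, $s=1$ approximation $\norm{w - \interC w}_{L^p(T)} \lesssim h_T \norm{\nabla w}_{L^p(\omega_T)}$, because for $w \in W^{1,p}_0(\Omega)$ one cannot simply subtract a global constant. Here I would use Lemma~\ref{lem:propC}\,\ref{itm:Prop7}: choose $c_T = |\omega_T|^{-1}\int_{\omega_T} w$ when $T \cap \partial\Omega = \emptyset$ and $c_T = 0$ otherwise, so that $\interC c_T = c_T$ on $T$ in either case (using that $\phi_j$ vanishes at boundary vertices for the boundary touching elements). Then stability and Friedrichs/Poincaré give the desired bound, just as in the treatment of $\Pizero^*$ in Lemma~\ref{lem:positive-norms}; this is exactly the point where the key property~\ref{itm:Prop7} of~$\interC$ replaces the role of~\eqref{eq:positive-norms3} for $\Pizero^*$ and drives the negative-order estimates.
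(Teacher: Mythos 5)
Your proposal is correct and mirrors the paper's proof almost step by step: self-adjointness $\interC^* = \interC$ (Lemma~\ref{lem:propC}\,\ref{itm:Prop1}) collapses the adjoint analysis, the constant-preservation Lemma~\ref{lem:propC}\,\ref{itm:Prop7} drives the $m=0$, $s=1$ positive-order estimate needed for the negative-order bounds (and is indispensable for $k=1$, where $\mathcal{L}^1_{0,0}(\tria)=\{0\}$ gives the projection argument no traction), and the reduced order $s\leq k$ correctly comes from $\interC$ being the identity only on $\mathcal{L}^1_{k-1,0}(\tria)$.

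The one spot where your route is vaguer than the paper's is the local $L^q$-stability. Lifting the weighted $L^2$-stability of $\interC_j$ to $L^q$ via ``norm equivalence and scaling'' only works directly for $q\geq 2$; for $q<2$ the scaling argument runs the wrong way. The paper instead first establishes $L^\infty$-stability by a self-bootstrap: an inverse inequality gives $\|\interC_j v\|_{L^\infty(\omega_j)} \lesssim |\omega_j|^{-1/2}\|\phi_j^{1/2}\interC_j v\|_{L^2(\omega_j)}$, and the Galerkin orthogonality $\|\phi_j^{1/2}\interC_j v\|_{L^2(\omega_j)}^2 = \langle \interC_j v, v\rangle_{\phi_j}$ lets one peel off a factor $\|\interC_j v\|_{L^\infty}$ from the right-hand side, yielding $\|\interC v\|_{L^\infty(\Omega)}\lesssim \|v\|_{L^\infty(\Omega)}$; interpolation with $L^2$-stability then covers $q\in[2,\infty]$ and duality via self-adjointness covers $q\in[1,2]$, with locality finally giving the local version. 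Your invocation of the dual-pair H\"older argument ``exactly as for $\Pizero$'' can also be made rigorous, but it requires constructing and scaling a dual basis for each weighted projection $\interC_j$, which is somewhat more cumbersome than the paper's direct $L^\infty$-argument. The rest of your proof is sound.
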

\begin{proof}
We start with the verification of the $L^q$-stability in \eqref{eq:GlobStabC}. Let $v\in L^\infty(\Omega)$.
  By the definition in~\eqref{eq:DefC} we obtain
  \begin{align*}
    \norm{ \interC v }_{L^\infty(\Omega)} \leq \Big\lVert\sum_{j\in \vertices} \varphi_j \norm{ \interC_j v }_{L^\infty(\omega_j)}\Big\rVert_{L^\infty(\Omega)} \leq \max_{j\in \vertices}\,\norm{ \interC_j v }_{L^\infty(\omega_j)}.
  \end{align*}
  Let $j\in \vertices$. 
  Then by an inverse estimate and the shape regularity of $\tria$ we have that
  \begin{align*}
    \norm{ \interC_j v }_{L^\infty(\omega_j)} = \max_{T\in \tria(\omega_j)} \norm{ \interC_j v }_{L^\infty(T)} 
    \lesssim \abs{\omega_j}^{-1/2} \Big( \sum_{T\in \tria(\omega_j)} \lVert\varphi_j^{1/2} \interC_j v \rVert_{L^2(T)}^2 \Big)^{1/2}.
  \end{align*}
  Due to~\eqref{eq:DefCz} the sum can be bounded as 
  \begin{align*}
    \sum_{T\in \tria(\omega_j)} \lVert \varphi_j^{1/2} \interC_j v \rVert_{L^2(T)}^2 
    = \langle \interC_j v , v\rangle_{\varphi_j}
     \leq \norm{ \interC_j v }_{L^\infty(\Omega)} \norm{ v }_{L^\infty(\Omega)} \abs{\omega_j}. 
  \end{align*}
 Combining the previous three estimates shows 
  \begin{align*}
    \norm{ \interC v }_{L^\infty(\Omega)} \lesssim \norm{ v }_{L^\infty(\Omega)}.
  \end{align*}
  For $q\in [1,\infty]$ we conclude global $L^q$-stability  by interpolation with the $L^2$-stability, see Lemma~\ref{lem:propC}\,\ref{itm:Prop2b}, and duality. 
By locality of~$\interC$ the local $L^q$-estimates in \eqref{eq:LocStabC} follow.

The remaining statements of the theorem follow with similar arguments as in the proof of Theorem~\ref{thm:interpolOperator} for~$\Pizero$ in Section~\ref{subsec:ProofThm1} using the local $L^p$-stability and Lemma~\ref{lem:propC}. 
The reduced maximal order of approximability $s\leq k$ in~\eqref{eq:ApxIlocC} is due the fact that~$\interC$ only preserves functions in $\mathcal{L}^1_{k-1,0}(\tria)$ but not functions in $\mathcal{L}^1_{k,0}(\tria)$.
\end{proof}

\subsection{Locality and projection}\label{sec:LocalProj}
Our operators $\Pizero$ and $\Pi$ as well as further Scott--Zhang type operators like the ones in \cite{ScottZhang90,TantardiniVeeser16} are local projections that are not self-adjoint. 
While the latter operators use discontinuous polynomial weights, we use continuous weight functions to ensure that the operator is well-defined on functionals in $W^{-1,2}(\Omega)$. 
Yet, the polynomial degree $3k$ of our weights is much higher than the polynomial degree $k$ of the discontinuous weights. 
A natural question to ask is whether it is possible to reduce the polynomial order of the continuous weight functions and obtain a local projection with properties as in Theorem~\ref{thm:interpolOperator}. 
In the following we argue that we need $\mathcal{L}^1_{k+1,0}(\tria)$ weights to ensure locality of the projection. 
Furthermore, we illustrate that projections with similar locality and $W^{-1,2}$-approximation properties as $\Pizero$ require even larger degrees than $k+1$. In particular, we show that the degree $3k$ is optimal for $k=1=d$. 

Let $P \colon L^2(\Omega) \to \mathcal{L}^1_{k,0}(\tria)$ be a linear and bounded operator.  
As a consequence of the Riesz representation theorem, there exist unique weights $\rho_i \in L^2(\Omega)$ with $i \in \nodesint$ such that $P$ has a representation in terms of the Bernstein basis
\begin{align}\label{eq:P-Bernstein}
  Pv = \sum_{i \in \nodesint} \skp{v}{\rho_i}_\Omega\, b_i.
\end{align}
The adjoint operator $P^* \colon L^2(\Omega) \to L^2(\Omega)$ reads  
\begin{align}\label{eq:Padj-Bernstein}
  P^* w = \sum_{i \in \nodesint} \skp{w}{b_i}_\Omega\, \rho_i. 
\end{align}
\begin{lemma}[Projection]\label{lem:proj}
  Let $P \colon L^2(\Omega) \to \mathcal{L}^1_{k,0}(\tria)$ be a linear bounded operator with the representation in~\eqref{eq:P-Bernstein}.
  If $P$ is a projection, then one has that 
  \begin{align*}
    \skp{\rho_i}{b_j}_\Omega = \delta_{i,j} \quad \text{ for all } i,j \in \nodesint.
  \end{align*}
  Furthermore, trivially $P^*$ is a projection onto the span of $\{\rho_i \colon i \in \nodesint\}$.   
\end{lemma}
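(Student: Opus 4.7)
The plan is to read off the identity directly from the projection property applied to basis functions. Since $P$ is a projection onto $\mathcal{L}^1_{k,0}(\tria)$ and $(b_j)_{j\in\nodesint}$ is a basis of this space, we have $P b_j = b_j$ for every $j\in\nodesint$. Substituting $v = b_j$ into the representation~\eqref{eq:P-Bernstein} gives
\begin{align*}
  b_j \;=\; P b_j \;=\; \sum_{i\in\nodesint} \skp{b_j}{\rho_i}_\Omega\, b_i.
\end{align*}
The Bernstein functions $(b_i)_{i\in\nodesint}$ are linearly independent, so matching coefficients yields $\skp{\rho_i}{b_j}_\Omega = \skp{b_j}{\rho_i}_\Omega = \delta_{i,j}$ for all $i,j\in\nodesint$, which is the claimed biorthogonality.

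For the second assertion I would use the just established biorthogonality together with the formula~\eqref{eq:Padj-Bernstein}. For each $j\in\nodesint$,
\begin{align*}
  P^* \rho_j \;=\; \sum_{i\in\nodesint} \skp{\rho_j}{b_i}_\Omega\, \rho_i \;=\; \rho_i\big|_{i=j} \;=\; \rho_j,
\end{align*}
so $P^*$ fixes each generator of $\textup{span}\set{\rho_i \colon i\in\nodesint}$ and therefore, by linearity, the whole span. Since the image of $P^*$ is contained in this span by definition~\eqref{eq:Padj-Bernstein}, the operator $P^*$ is a projection onto $\textup{span}\set{\rho_i \colon i\in\nodesint}$.

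There is no real obstacle here: both claims are immediate consequences of evaluating $P$ and $P^*$ on their respective basis elements, which is why the statement is labelled as a preparatory observation. The only point deserving care is that the $\rho_i$ need not be linearly independent a priori, but this is not required for the conclusion, since the biorthogonality relation together with $P^*\rho_j = \rho_j$ already characterise $P^*$ as a projection onto the span (the $\rho_i$ then turn out to be linearly independent as well, as an a posteriori consequence of biorthogonality with the independent family $(b_j)_{j\in\nodesint}$).
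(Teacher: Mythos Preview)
Your proof is correct and follows essentially the same approach as the paper: apply the projection property to the Bernstein basis functions and read off the biorthogonality from the linear independence of $(b_i)_{i\in\nodesint}$. You give more detail on the second (``trivial'') claim than the paper does, but the argument is the natural one and entirely in line with the paper's reasoning.
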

\begin{proof}
  Since $P$ is a projection, we have for all $j\in \nodesint$ that
  \begin{align*}
    Pb_j = \sum_{i \in \nodesint} \skp{b_j}{\rho_i}_\Omega\, b_i = b_j.  
  \end{align*}
  Because $(b_i)_{i \in \nodesint}$ is a basis, this implies that $\skp{b_j}{\rho_i}_\Omega = \delta_{i,j}$ for any $i,j \in \nodesint$. 
\end{proof}

The following lemma is a stronger version of Lemma~\ref{lem:noLocSelfAdjProj} showing that any projection with same order continuous weights is the $L^2$-projection. 
Since the $L^2$-projection is not local this implies that a projection to $\mathcal{L}^1_{k,0}(\tria)$ cannot be local when it has weight functions of the same polynomial degree $k$.

\begin{lemma}[Same order weights]\label{lem:L2-weights}
Let $P\colon L^2(\Omega)\!\to\!\mathcal{L}^1_{k,0}(\tria)$ be a projection onto $\mathcal{L}^1_{k,0}(\tria)$ with weights $\rho_i \in \mathcal{L}^1_{k,0}(\tria)$ in~\eqref{eq:P-Bernstein}. 
Then $P$ is the $L^2$-projection $\Pi_2$  in~\eqref{eq:DefL2proj}.  
\end{lemma}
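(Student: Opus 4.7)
The plan is to identify $P$ with the $L^2$-projection $\Pi_2$ by verifying the Galerkin characterization from~\eqref{eq:DefL2proj}, namely that $Pv \in \mathcal{L}^1_{k,0}(\tria)$ and $\skp{Pv}{w_h}_\Omega = \skp{v}{w_h}_\Omega$ holds for all $v \in L^2(\Omega)$ and $w_h \in \mathcal{L}^1_{k,0}(\tria)$.

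First I would invoke Lemma~\ref{lem:proj} to obtain the biorthogonality $\skp{\rho_i}{b_j}_\Omega = \delta_{i,j}$ for all $i,j \in \nodesint$. The crucial consequence of the additional hypothesis $\rho_i \in \mathcal{L}^1_{k,0}(\tria)$ is that the $\rho_i$ now live in the same space as the $b_j$. Since $(b_j)_{j\in\nodesint}$ is a basis of $\mathcal{L}^1_{k,0}(\tria)$, biorthogonality forces linear independence of $(\rho_i)_{i\in\nodesint}$; a dimension count using $\dim \mathcal{L}^1_{k,0}(\tria) = \abs{\nodesint}$ then shows that $(\rho_i)_{i\in\nodesint}$ is itself a basis of $\mathcal{L}^1_{k,0}(\tria)$, namely the $L^2$-dual basis of the Bernstein basis.

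Next I would expand an arbitrary $w_h \in \mathcal{L}^1_{k,0}(\tria)$ in this dual basis: writing $w_h = \sum_i c_i \rho_i$ and pairing against $b_j$ yields $c_j = \skp{w_h}{b_j}_\Omega$, so
\begin{align*}
w_h &= \sum_{i\in\nodesint} \skp{w_h}{b_i}_\Omega\, \rho_i.
\end{align*}
Substituting this identity into $\skp{v}{w_h}_\Omega$, swapping the finite sum with the pairing, and invoking the representation~\eqref{eq:P-Bernstein} gives
\begin{align*}
\skp{v}{w_h}_\Omega &= \sum_{i \in \nodesint} \skp{w_h}{b_i}_\Omega\, \skp{v}{\rho_i}_\Omega = \skp{Pv}{w_h}_\Omega,
\end{align*}
which, combined with $Pv \in \mathcal{L}^1_{k,0}(\tria)$, is exactly the defining property of $\Pi_2 v$. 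Hence $P = \Pi_2$.

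I do not expect any real obstacle: the essential content of the statement is the uniqueness of the $L^2$-dual basis inside $\mathcal{L}^1_{k,0}(\tria)$. Once one notes that $\rho_i \in \mathcal{L}^1_{k,0}(\tria)$ together with the biorthogonality from Lemma~\ref{lem:proj} pins the $\rho_i$ down as the $L^2$-dual basis of $(b_j)_{j\in\nodesint}$ in $\mathcal{L}^1_{k,0}(\tria)$, the remainder is a routine unfolding of definitions. Conceptually, this is what makes the lemma a strengthening of Lemma~\ref{lem:noLocSelfAdjProj}: the restrictive hypothesis on the weight degree, not self-adjointness, is enough to force $P = \Pi_2$.
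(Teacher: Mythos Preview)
Your proof is correct and follows essentially the same route as the paper: both use Lemma~\ref{lem:proj} to obtain biorthogonality, conclude that $(\rho_i)_{i\in\nodesint}$ is a basis of $\mathcal{L}^1_{k,0}(\tria)$, and then verify the Galerkin orthogonality defining~$\Pi_2$. The only cosmetic difference is that the paper phrases the last step via the adjoint, writing $\skp{v-Pv}{w_h}_\Omega = \skp{v}{w_h - P^*w_h}_\Omega = 0$, whereas you unfold $P^*w_h = w_h$ explicitly through the dual-basis expansion.
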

\begin{proof} 
Let $P\colon L^2(\Omega)\!\to\!\mathcal{L}^1_{k,0}(\tria)$ be a projection with representation as in~\eqref{eq:P-Bernstein} and weights $\rho_i \in \mathcal{L}^1_{k,0}(\tria)$. 
Since the functions $(b_i)_{i \in \nodesint}$ form a basis of $\mathcal{L}^1_{k,0}(\tria)$, the biorthogonality in Lemma~\ref{lem:proj} implies that also the functions $(\rho_i)_{i \in \nodesint}$ form a basis of $\mathcal{L}^1_{k,0}(\tria)$.  
 Thus, the adjoint operator  $P^*$ as represented in~\eqref{eq:Padj-Bernstein} is a projection onto $\mathcal{L}^1_{k,0}(\tria)$.  
Hence, for any $v \in L^2(\Omega)$ and any $w_h \in \mathcal{L}^1_{k,0}(\tria)$ we find that
\begin{align*}
\skp{v - Pv }{w_h}_{\Omega} = \skp{v}{w_h - P^* w_h}_{\Omega}  = 0. 
\end{align*}
This is exactly the definition of the $L^2$-projection and thus finishes the proof. 
\end{proof}

It is possible to construct a local projection with weight functions in $\mathcal{L}^1_{k+1,0}(\tria)$ as the following remark shows. 

\begin{remark}[$k+1$ weights]\label{rmk:weights}
  For given polynomial degree $k\geq 1$ let $\Pizero$ be our $W^{-1,2}$-stable projection defined in~\eqref{eq:def-Pi} and $\interC_{k+1}$ the interpolation operator defined in~\eqref{eq:DefC} mapping to $\mathcal{L}^1_{k+1,0}(\tria)$. 
  Then $\Pizero$ and $\interC_{k+1}$ are both $W^{-1,2}$-stable and preserve $\mathcal{L}^1_{k,0}(\tria)$-functions.  Thus, the composition $\Pizero \circ \interC_{k+1}$ is a projection onto~$\mathcal{L}^1_{k,0}(\tria)$  which is $W^{-1,2}$-stable. The weights of~$\interC_{k+1}$ are $\mathcal{L}^1_{k+1,0}(\tria)$-functions, so the weights of~$\Pizero \circ \interC_{k+1}$ are also $\mathcal{L}^1_{k+1,0}(\tria)$-functions with increased support. 
  For the operator $\Pizero \circ \interC_{k+1}$ we obtain similar properties as in Theorem~\ref{thm:interpolOperator} but with~$\omega_i^2$ replaced by~$\omega_i^3$.
  \end{remark}

\begin{remark}[Alternative weights]\label{rmk:op-Stevenson}
Further suitable weight functions with slightly enlarged support have been designed in \cite{SV.2020}. 
The authors' approach relies on an element bubble correction. 
For the weight functions one can use for example functions in $\mathcal{L}^1_{k+d+1}(\tria)$ or weight functions in $\mathcal{L}^1_{k+1}(\tria')$, where $\tria'$ is a full barycentric refinement of $\tria$. 
The resulting projection operator $\widetilde{\Pi}$ is local in the sense that $\support(\widetilde{\Pi} (v \indicator_T)) \subset \omega_T^2$ for any $T \in \tria$. 
Hence it has the same locality property as the operator in Remark~\ref{rmk:weights} but uses higher polynomial degrees for the weight functions. 
\end{remark}

The operator in the previous remark is not as local as the projection operator $\Pizero$ with weights of degree $3k$. 
The following remark shows that for the special case of $k = d = 1$ one cannot take a lower polynomial degree for an operator as local as $\Pizero$ such that its adjoint preserving constants on interior simplices. 
Recall that the latter is a key in the proof of Theorem~\ref{thm:interpolOperator}. 
In particular, it does not suffice to choose weights of polynomial degree $k+1$. 

\begin{remark}[Increased order, $d = k = 1$]
  We consider a regular triangulation that contains the interior simplex~$T=[0,1]$. 
  Assume that there are weight functions $\rho_0,\rho_1\in \mathcal{L}^1_{2,0}(\tria)$ which satisfy that $\rho_i(j) = 0$, if $i \neq j$, to ensure the locality. 
  This requirement and symmetry show that on $T$ the weights $\rho_0$ and~$\rho_1$ are
  of the form
  \begin{align*}
    \rho_{0} = \lambda_0(s \lambda_0 + t \lambda_1) \quad  \text{ and } \quad 
    \rho_{1} = \lambda_1(s \lambda_1 + t \lambda_0) \qquad \text{ for some }s, t\in \mathbb{R}. 
  \end{align*}
  The property $\Pi^*1 = 1$ requires
  $\rho_0+\rho_1 = c_k^{-1}=2$ on~$T$, which determines $s=1$ and $t=2$.
 It follows that $\skp{\rho_0}{b_1}_\Omega = \skp{\rho_0}{\lambda_1}_T  >0$. This contradicts the biorthogonality in Lemma~\ref{lem:local-basis}\,\ref{itm:local-duality}.   
\end{remark}

\section{Applications}
\label{sec:applications}

This section contains applications of the projections~$\Pi$ and~$\Pizero$ to illustrate their beneficial properties. 
We investigate a semi- and a full discretization of the heat equation as well as a least squares finite element method with rough given data. 

\subsection{Interpolation in semi-discrete time marching schemes}
\label{sec:InterpolSemiDiscr}
We consider the heat equation on a bounded time space cylinder $Q = \mathcal{J} \times \Omega$ with bounded time interval $\mathcal{J} = (0,T)$ and a bounded Lipschitz domain $\Omega \subset \mathbb{R}^d$. 
For given right-hand side $f\colon Q \to \mathbb{R}$ and initial data $u_0\colon \Omega \to \mathbb{R}$ we aim to find a function $u$ satisfying
\begin{alignat}{2}
  \partialt u -\Delta_\bfx u& = f\ \quad &&\text{ in }Q, \notag\\
  u(0,\bigcdot) &= u_0\ \;&&\text{ in }\Omega,\label{eq:ExampleHeat} \\
  u &= 0\ \;&&\text{ on } \mathcal{J} \times \partial \Omega. \notag
\end{alignat}
With the standard notation for Bochner spaces we assume that $f\in L^2(\mathcal{J};W^{-1,2}(\Omega))$ and $u_0 \in L^2(\Omega)$. 
The unique weak solution can be found in the space
\begin{align}
  \label{eq:def-V-parabolic}
  V \coloneqq L^2(\mathcal{J};W^{1,2}_0(\Omega)) \cap W^{1,2}(\mathcal{J};W^{-1,2}(\Omega)).
\end{align}

In the following we consider a semi-discretization in time. More specifically, for a regular partition $\triax$ of $\Omega$ into closed simplices our semi-discrete ansatz space reads $V_{h_\bfx} \coloneqq W^{1,2}(\mathcal{J};\mathcal{L}^1_{k,0}(\triax))\subset V$. 
Let $\Pi_2\colon L^2(\Omega) \to \mathcal{L}^1_{k,0}(\triax)$ denote the $L^2$-orthogonal projection mapping to $\mathcal{L}^1_{k,0}(\triax)$. 
Then the discrete solution $u_h \in V_{h_\bfx}$ satisfies $u_h(0,\bigcdot) = \Pi_2 u_0$ and for all $w_h \in \mathcal{L}^1_{k,0}(\triax)$ and for a.e.~$s\in \mathcal{J}$
\begin{align}\label{eq:ExampleHeatSemiDis}
  \langle \partialt u_h(s),w_h\rangle_\Omega + \langle \nabla_\bfx u_h(s), \nablax w_h \rangle_\Omega = \langle f(s),w_h\rangle_\Omega.
\end{align}
While classical error analysis for such problems treats rate-optimality, see~\cite{Thomee06}, more recent results include also quasi-optimality of the semi-discrete approximation. 
In \cite[Thm.~3.4]{CH.2002} and \cite[Thm.~3.10]{TantardiniVeeser16} the authors prove that under the (necessary) assumption that the $L^2$-projection $\Pi_2$ is $W_0^{1,2}(\Omega)$-stable it holds that
\begin{align}\label{eq:BestApxTanVee}
  \begin{aligned}
    &\norm{ \nablax (u - u_h) }_{L^2(\mathcal{J};L^2(\Omega))}^2 + \norm{ \partialt (u - u_h) }^2_{L^2(\mathcal{J};W^{-1,2}(\Omega))} \\
    &\qquad\lesssim \min_{v_h \in V_{h_\bfx}} \left( \norm{ \nablax (u - v_h) }_{L^2(\mathcal{J};L^2(\Omega))}^2 + \norm{ \partialt (u - v_h) }_{L^2(\mathcal{J};W^{-1,2}(\Omega))}^2 \right).
  \end{aligned}
\end{align}
Refer to Section~\ref{sec:SelfAdjProj} for details on~$\Pi_2$ and its $W^{1,2}_0(\Omega)$-stability.
The properties of our projection $\Pizero$, as introduced in Section~\ref{sec:higher-order}, allow us to reprove the following error estimate in \cite[Eq.~5.10]{TantardiniVeeser16}. 
Let $\mathcal{V}_x$ denote the set of vertices in~$\triax$. 
\begin{theorem}[A~priori error estimate]\label{thm:APrioriTV}
  If $\Pi_2$ is $W^{1,2}_0(\Omega)$-stable, then for the solution $u$ to~\eqref{eq:ExampleHeat} and the semi-discrete solution $u_h$ to~\eqref{eq:ExampleHeatSemiDis} and all $r = 1,\dots, k+1$ and $s = 0,\dots, k+1$ one has that
  \begin{align*}
    & \norm{ \nablax (u - u_h) }^2_{L^2(\mathcal{J};L^2(\Omega))} + \norm{ \partialt (u - u_h) }_{L^2(\mathcal{J};W^{-1,2}(\Omega))}^2 \\
    & \qquad\lesssim \sum_{j\in \mathcal{V}_x}  \left( h_j^{2(r-1)} \norm{\nablax^{r} u}_{L^2(\mathcal{J};L^2(\omega_j^2))}^2 + h_j^{2(s+1)} \norm{ \partialt \nablax^{s}u }^2_{L^2(\mathcal{J};L^2(\omega_j^2))} \right). 
  \end{align*}
\end{theorem}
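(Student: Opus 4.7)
The plan is to combine the quasi-optimality estimate \eqref{eq:BestApxTanVee} with the pointwise-in-time application of $\Pizero$ and invoke the Sobolev and negative-norm approximation properties from Theorem~\ref{thm:interpolOperator}. Since \eqref{eq:BestApxTanVee} is a minimum over $v_h \in V_{h_\bfx}$, it suffices to exhibit a single good competitor.

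First I would define $v_h \in V_{h_\bfx}$ by $v_h(t,\bigcdot) \coloneqq \Pizero u(t,\bigcdot)$ for a.e.\ $t \in \mathcal{J}$, where $\Pizero\colon W^{-1,2}(\Omega) \to \mathcal{L}^1_{k,0}(\triax)$ acts in the spatial variable only. Since $\Pizero$ is linear and bounded on $W^{-1,2}(\Omega)$, on $L^2(\Omega)$, and on $W^{1,2}_0(\Omega)$ (Theorem~\ref{thm:interpolOperator}), it commutes with $\partialt$, so $\partialt v_h = \Pizero \partialt u$, and $v_h \in W^{1,2}(\mathcal{J};\mathcal{L}^1_{k,0}(\triax)) = V_{h_\bfx}$.

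Next I would estimate each of the two terms from \eqref{eq:BestApxTanVee} separately. For the gradient term, the third approximation bound in~\eqref{eq:ApxIloc} with $m=1$, $q=2$, $s=r$ gives, for every $T \in \triax$,
\begin{align*}
  \norm{\nablax(u(t) - \Pizero u(t))}_{L^2(T)} \lesssim h_T^{r-1} \norm{\nablax^r u(t)}_{L^2(\omega_T)}.
\end{align*}
Squaring, summing over $T$, regrouping via vertex patches (using shape regularity $h_T \eqsim h_j$ for $j$ a vertex of $T$ and $\omega_T \subset \omega_j^2$), and integrating in time yields the desired bound by $\sum_{j \in \mathcal{V}_\bfx} h_j^{2(r-1)} \norm{\nablax^r u}_{L^2(\mathcal{J};L^2(\omega_j^2))}^2$. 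For the time-derivative term, since $\partialt(u - v_h) = \partialt u - \Pizero \partialt u$, I would apply the $W^{-1,2}$-localization~\eqref{eq:ApxEqui} with $p=2$ pointwise in time to $\xi = \partialt u(t)$, and then the second estimate in~\eqref{eq:ApxIloc} with $p=2$ and $s$ in place of $r$ to conclude
\begin{align*}
  \norm{\partialt u(t) - \Pizero \partialt u(t)}_{W^{-1,2}(\Omega)}^2 \lesssim \sum_{j \in \mathcal{V}_\bfx} h_j^{2(s+1)} \norm{\nablax^s \partialt u(t)}_{L^2(\omega_j^2)}^2,
\end{align*}
where for $s=0$ the right-hand side reads $h_j^2 \norm{\partialt u(t)}_{L^2(\omega_j^2)}^2$. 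Integrating in time and adding the two contributions produces the claim.

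The only potentially delicate point is justifying that $\Pizero u$ is measurable in time and lies in $V_{h_\bfx}$; this follows from the fact that $\Pizero$ is a bounded linear operator in space and that by our notational convention all occurring norms are interpreted as $+\infty$ when the corresponding regularity fails, so the estimate is vacuously true on that event. Apart from this bookkeeping, all the work has been done in Theorem~\ref{thm:interpolOperator} and \eqref{eq:BestApxTanVee}, and no further obstacle is expected.
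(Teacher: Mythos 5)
Your proposal is correct and takes essentially the same route as the paper: choose $v_h = \Pizero u$ (applied pointwise in time) as the competitor in the quasi-optimality bound \eqref{eq:BestApxTanVee}, use the commutation $\partialt \Pizero u = \Pizero \partialt u$ (obtained by density from the stability of $\Pizero$), and then invoke the local approximation estimates and norm localization from Theorem~\ref{thm:interpolOperator}. You have merely spelled out which estimates of \eqref{eq:ApxIloc} and \eqref{eq:ApxEqui} are applied to each of the two error terms, details the paper leaves to the reader.
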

\begin{proof}
  For any $v \in V$ we denote by $\Pizero v \in L^2(\mathcal{J};\mathcal{L}^1_{k_x}(\triax))$ the pointwise in time application of the projection operator $\Pizero$, introduced in Section~\ref{sec:higher-order}.  
  For smooth functions $v \in V$ we obtain the commutation property $\partial_t \Pizero v = \Pizero \partial_t v $. 
  Due to density and the $W^{-1,2}$-stability in Theorem~\ref{thm:interpolOperator} this extends to arbitrary functions  $v \in V$. 
With the best approximation property~\eqref{eq:BestApxTanVee} with $v_h \coloneqq \Pi u$, the commutation property, and the interpolation error estimates in Theorem~\ref{thm:interpolOperator} the claim follows. 
\end{proof}

\begin{remark}[Fractional orders]
  Using fractional estimates for our interpolation operator (see Remark~\ref{rem:FracOrder}) allows us to extend the result of Theorem~\ref{thm:APrioriTV} to fractional orders, similarly as in \cite[Prop.~7.27]{Tantardini12} for polynomial degree $k = 1$.  
\end{remark}

\subsection{Interpolation on tensor meshes in space-time domains}\label{sec:InterPolTensor}

So-called space-time finite element methods constitute an alternative to time-marching schemes for parabolic problems.  
Space-time methods treat the time as an additional spatial dimension and apply techniques known from time-independent problems, see for example \cite{Steinbach15,LangerMooreNeumueller16,StevensonWesterdiep20,DieningStorn21}.     
The resulting schemes are quasi-optimal and hence we can bound the best-approximation error from above by an interpolation error. 
This results in optimal rates of convergence for the numerical schemes and motivates adaptive mesh refinement. 
However, existing interpolation operators as in \cite[Sec.~4.1]{FuehrerKarkulik19} or \cite[Cor.~3.4]{Steinbach15} require smooth solutions and are not stable with respect to the norm in the space $V$ given by
\begin{align*}
  \norm{ \bigcdot }_{V} \coloneqq 
  \norm{ \partialt \bigcdot }_{L^2(\mathcal{J};W^{-1,2}(\Omega))} + \norm{ \nablax \bigcdot }_{L^2(\mathcal{J};L^2(\Omega))}.
\end{align*}  
We remedy this difficulty for discrete tensor product subspaces. Let $\triat$ denote a partition of the time interval $\mathcal{J}$ into closed intervals and let $\triax$ denote a regular partition of the domain $\Omega$ into closed simplices.
The tensor product mesh $\mathcal{Q} \coloneqq \triat \otimes \triax$ consists of closed time-space cells $K = K_\bft \times K_\bfx$ with $K_\bft\in \triat$ and $K_\bfx \in \triax$. 
For polynomial degrees $k_\bft ,k_\bfx \in \mathbb{N}$ the discrete tensor product subspace is given by
\begin{align}\label{eq:TensorFEspace}
  &V_h \coloneqq \mathcal{L}^1_{k_\bft}(\triat) \otimes \mathcal{L}^1_{k_\bfx,0}(\triax) \subset V.
\end{align}
In other words, each element $v_h \in V_h$ is a continuous piecewise polynomial function. 
More specifically, for each time-space cell $K = K_\bft \times K_\bfx \in \mathcal{Q}$ there are polynomials $v_\bft \in \mathcal{P}_{k_\bft}(K_\bft)$ and $v_\bfx \in \mathcal{P}_{k_\bfx}(K_\bfx)$ with $v_h|_K = v_\bft v_\bfx$. 
Similarly, we define the time-discrete space by
\begin{align*}
  V_{h_\bft} & \coloneqq \mathcal{L}^1_{k_\bft}(\triat) \otimes L^2(\Omega) \subset V. 
\end{align*}
Let $\Pi_\bft\colon L^2(\mathcal{J}) \to \mathcal{L}^1_{k_\bft}(\triat)$ denote the 1D Scott--Zhang type interpolation operator defined in~\eqref{eq:def-Pi} without zero boundary traces. 
More precisely, let $b_{\bft,i}\in \mathcal{L}^1_{k_\bft}(\triat)$ denote the Bernstein basis functions associated to the Lagrange node $i \in \nodest$ of $\triat$ and let $\psi_{\bft,i} \in \mathcal{L}^1_{3k_t}(\triat)$ denote the biorthogonal basis functions designed in Section~\ref{sec:higher-order}. 
Then the operator reads 
\begin{align*}
  \Pi_\bft v_\bft \coloneqq \sum_{i\in \nodest} \langle v_\bft, \psi_{\bft,i} \rangle_\mathcal{J}\, b_{\bft,i}\qquad\text{for all }v_\bft \in L^2(\mathcal{J}).
\end{align*}
We extend this operator to a mapping $\Pi_\bft\colon L^2(\mathcal{J};L^2(\Omega)) \to V_{h_\bft}$ 
defined by 
    %
\begin{align}\label{eq:DefPIt}
  \Pi_\bft v = \sum_{i\in \nodest} \langle v, \psi_{\bft,i}\rangle_\mathcal{J} \, b_{\bft,i}\qquad\text{for all } v \in L^2(\mathcal{J};L^2(\Omega)).
\end{align}
Let $\omega_{K_\bft} = \bigcup \lbrace K_\bft' \in \triat\colon K_\bft \cap K_\bft' \neq \emptyset \rbrace$ denote the element patch with respect to the partition in time for all $\Kt\in \triat$. 
We assume that $\triat$ is shape regular in the sense that we have for all $T\in \triat$ the equivalence $h_\Kt \coloneqq \abs{\Kt} \eqsim \abs{\omega_{K_\bft}}$.
\begin{theorem}[Interpolation in time]\label{thm:InterpolInTime}
  Let $X$ denote the space $W^{-1,2}(\omega)$ with $\omega \subset \Omega$ or $L^2(\Kx)$ with $\Kx \in \triax$.      
  The projection $\It\colon L^2(\mathcal{J};L^2(\Omega)) \to V_{h_\bft}$ defined in~\eqref{eq:DefPIt} satisfies for all $v\in V$, all $m,s\in \mathbb{N}_0$ with $0\leq m\leq s \leq k_\bft + 1$, and all $\Kt\in \triat$ 
  \begin{align}\label{eq:LocalLpStabInTime}
    \norm{ \partialt^m (v - \Pi_\bft v) }_{L^2(K_\bft;X)} \lesssim h_\Kt^{s-m} \norm{\partialt^s v}_{L^2(\omega_{\Kt};X)}.
  \end{align}
\end{theorem}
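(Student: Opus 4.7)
The plan is to view $\It$ as the natural Bochner-valued extension of the one-dimensional Scott--Zhang projection from Theorem~\ref{thm:interpolOperator} (applied to the time direction, with weights $\psi_{\bft,i}$) and to transfer the proof of Lemma~\ref{lem:positive-norms} verbatim into the Bochner setting. Throughout, $Y$ denotes either $L^2(\Kx)$ or $W^{-1,2}(\omega)$. For any Banach space $Y$ the operator extends to $\It\colon L^2(\mathcal{J};Y)\to \mathcal{L}^1_{k_\bft}(\triat)\otimes Y$ via the Bochner integrals
\[
  \It u(t) \coloneqq \sum_{i\in\nodest}\Bigl(\int_{\mathcal{J}} u(s)\,\psi_{\bft,i}(s)\ds\Bigr) b_{\bft,i}(t).
\]
By the biorthogonality of $(b_{\bft,i})_i$ and $(\psi_{\bft,i})_i$ this is a projection onto $\mathcal{L}^1_{k_\bft}(\triat)\otimes Y$, and by linearity of the Bochner integral it commutes with every bounded linear map $L\colon Y\to Z$, i.e., $L\circ(\It u) = \It(L\circ u)$. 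Specializing $L$ to the restriction map $L^2(\Omega)\to L^2(\Kx)$ handles the $X=L^2(\Kx)$ case, while specializing $L$ to the dual pairing $\xi\mapsto \langle \xi,\phi\rangle_\omega$ with $\phi\in W^{1,2}_0(\omega)$ handles the $X=W^{-1,2}(\omega)$ case.

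Next I would establish the Bochner local stability
\[
  \|\It u\|_{L^2(\Kt;Y)} \lesssim \|u\|_{L^2(\omega_\Kt;Y)} \qquad\text{for all } u\in L^2(\mathcal{J};Y),
\]
by the argument of Lemma~\ref{lem:positive-norms}: the partition of unity $\sum_i b_{\bft,i}=1$ and the triangle inequality yield $\|\It u(t)\|_Y \leq \max_i \|\int u\psi_{\bft,i}\|_Y$, and H\"older's inequality together with the $L^\infty$ bounds of Proposition~\ref{pro:localWeights}\,\ref{itm:Boundedness} give $\|\int u\psi_{\bft,i}\|_Y \lesssim h_\Kt^{-1/2}\|u\|_{L^2(\omega_i;Y)}$. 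Integration in $t$ yields the claim. The same inequality for $Y=W^{-1,2}(\omega)$ follows by duality once one checks that the Bochner integral commutes with the pairing against test functions in $W^{1,2}_0(\omega)$, which is immediate from the boundedness of $\psi_{\bft,i}$.

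Finally, the standard projection plus inverse-estimate argument of Lemma~\ref{lem:positive-norms} transfers verbatim. Given $p\in\mathcal{P}_{k_\bft}(\omega_\Kt;Y)$ one has $\It p=p$ on $\Kt$, and the $Y$-valued inverse estimate $\|\partialt^m q\|_{L^2(\Kt;Y)}\lesssim h_\Kt^{-m}\|q\|_{L^2(\Kt;Y)}$ for $q\in\mathcal{L}^1_{k_\bft}(\triat)\otimes Y$ combines with the local stability above to give
\[
  \|\partialt^m(u-\It u)\|_{L^2(\Kt;Y)}
  \lesssim \inf_{p\in\mathcal{P}_{k_\bft}(\omega_\Kt;Y)}\bigl(\|\partialt^m(u-p)\|_{L^2(\Kt;Y)} + h_\Kt^{-m}\|u-p\|_{L^2(\omega_\Kt;Y)}\bigr).
\]
Invoking the Bochner Bramble--Hilbert estimate from the appendix for the right-hand side yields the desired bound $\lesssim h_\Kt^{s-m}\|\partialt^s u\|_{L^2(\omega_\Kt;Y)}$ for all $0\leq m\leq s\leq k_\bft+1$. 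The theorem then follows by taking $u=v|_\Kx$ for $X=L^2(\Kx)$ and $u=v|_\omega$ for $X=W^{-1,2}(\omega)$.

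The main obstacle is the $Y=W^{-1,2}(\omega)$ case: one must verify that the Bochner integral, the projection property, and the polynomial approximation estimates interact correctly with the negative norm. This reduces to checking that smooth $Y$-valued functions are dense in $L^2(\mathcal{J};W^{-1,2}(\omega))$ and that the scalar Bramble--Hilbert lemma lifts to Bochner spaces with Banach target, both of which are routine and are exactly the statements assembled in the appendix.
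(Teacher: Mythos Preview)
Your proposal is correct and follows essentially the same route as the paper: establish the local inverse-type stability $\|\partial_\bft^m \Pi_\bft v\|_{L^2(K_\bft;X)} \lesssim h_{K_\bft}^{-m}\|v\|_{L^2(\omega_{K_\bft};X)}$ directly from the weight bounds of Proposition~\ref{pro:localWeights}, then subtract the Bochner averaged Taylor polynomial of Lemma~\ref{lem:PolyApxInTime} and use the projection property. The only difference is that the paper treats $X$ as an abstract Banach space from the outset, so your separate handling of the $W^{-1,2}(\omega)$ case via duality and commutation with linear maps is an unnecessary detour---all estimates hold verbatim for any Banach target~$X$.
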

\begin{proof}
  Let $X$ be defined as in the theorem. 
  Let $v\in W^{s,2}(I;X)$ for some $s\in \mathbb{N}_0$, let $m \in \mathbb{N}_0$ with $m \leq s$, and let $\tau \in \Kt \in \triat$.
  The definition of $\Pi_\bft$ in~\eqref{eq:DefPIt} yields
  \begin{align*}
    \partialt^m \Pi_\bft v(\tau) 
    = \sum_{i\in \nodest} \langle v, \psi_{\bft,i} \rangle_{\mathcal{J}} \, \partialt^m b_{\bft,i}(\tau) 
    = \sum_{i\in \nodest \cap K_t} \langle v, \psi_{\bft,i} \rangle_{\mathcal{J}} \, \partialt^m b_{\bft,i}(\tau).
  \end{align*}
  Hence, Jensen's inequality, $\support(\psi_{t,i})\subset \omega_{t,i} \coloneqq \support (b_{t,i})$, and the fact that by Proposition~\ref{pro:localWeights} we have $\norm{\psi_{t,i}}_{L^{2}(\omega_{t,i})} \lesssim \abs{K_t}^{-1/2}$, yield
  \begin{align*}
    \norm{ \partialt^m \Pi_\bft v(\tau) }_X 
    &\leq \sum_{i\in\nodest \cap K_t} \int_{\mathcal{J}} \norm{v(\sigma)}_X \abs{\psi_{\bft,i}(\sigma)} \dsig\, \abs{\partialt^m b_{\bft,i}(\tau)}\\
    & \lesssim \abs{\Kt}^{-m} \sum_{i\in\nodest \cap K_t} \int_{\omega_{t,i}} \norm{v(\sigma)}_X \abs{\psi_{\bft,i}(s)} \ds\\
    & \leq \abs{\Kt}^{-m} \sum_{i \in \nodest \cap K_t} \norm{v}_{L^2(\omega_{t,i};X)} \norm{ \psi_{\bft,i}}_{L^{2}(\omega_{t,i})} \\
    &\lesssim \abs{\Kt}^{-m} \abs{\Kt}^{-1/2} \norm{v}_{L^2(\omega_\Kt;X)}.
  \end{align*}
  Integrating over $\Kt$ results in 
  \begin{align}\label{eq:inverseLikeEst}
    \norm{\partialt^m \Pi_\bft v}_{L^2(\Kt; X)} \lesssim \abs{\Kt}^{-m}  \norm{v}_{L^2(\omega_\Kt;X)}.
  \end{align}
  We use the averaged Taylor polynomial $T^{k_t} v \in \mathcal{P}_{k_t} (K_t) \otimes X$ of degree~$k_t$ in Bochner spaces. 
  The definition and approximation properties of $T^{k_t}$ are presented in Lemma~\ref{lem:PolyApxInTime} below. 
  From this and the inequality in~\eqref{eq:inverseLikeEst} it follows that  
    \begin{align*}
    &\norm{\partialt^m (v - \Pi_\bft v)}_{L^2(\Kt;X)} \\
    &\qquad \leq 
    \norm{ \partialt^m (v - T^{k_t} v) }_{L^2(\Kt;X)} + \norm{ \partialt^m \Pi_\bft (v - T^{k_t} v)}_{L^2(\Kt;X)}\\
    &\qquad \lesssim \abs{\Kt}^{s-m} \norm{\partialt^s v}_{L^2(\omega_\Kt;X)}\qquad\qquad\text{for all }s = m,\ldots, k_t+1.\qedhere
  \end{align*}
\end{proof}
In addition to an interpolation operator in time we now want to apply an interpolation operator $\Ix\colon  V \to V_{h_\bfx}\coloneqq L^2(\mathcal{J}) \otimes \mathcal{L}^1_{k_\bfx,0}(\triax)$ in space. 
This operator results from applying the projection $\Pi\colon W^{-1,2}(\Omega) \to \mathcal{L}^1_{k_\bfx,0}(\triax)$ defined in~\eqref{eq:def-Pi} pointwise in time. 
More precisely, let $b_{\bfx,i} \in \mathcal{L}^1_{k_\bfx,0}(\triax)$ denote the Bernstein basis function and let $\psi_{\bfx,i} \in \mathcal{L}^1_{3k_\bfx,0}(\triax)$ denote the biorthogonal basis defined in Section~\ref{sec:higher-order} for all interior Lagrange nodes $i\in \nodesint_\bfx$ of $\triax$. 
The interpolation operator in space reads
\begin{align}\label{eq:DefPix}
  \Pi_\bfx v = \sum_{i \in \nodesint_\bfx} \langle v,\psi_{\bfx,i}\rangle_\Omega \,b_{\bfx,i} \qquad \text{  for all }v \in V. 
\end{align}
The composition of both operators
$
\Pi_\otimes \coloneqq \Pi_\bft \Pi_\bfx \colon V\to V_h$ reads
\begin{align}\label{eq:Composition}
  \Pi_\otimes v = \sum_{i \in \nodest}\sum_{j\in \nodesint_\bfx} \int_{\mathcal{J}} \langle v(s) , \psi_{\bft,i}(s) \psi_{\bfx,j}\rangle_{\Omega} \ds\, b_{\bft,i} b_{\bfx,j}\qquad\text{for all }v\in V.
\end{align}

\begin{lemma}[Commutation]\label{lem:Commutation}
  The interpolation operators satisfy for all functions $\xi\in W^{1,2}(\mathcal{J};W^{-1,2}(\Omega))$, $v\in L^2(\mathcal{J};W^{1,2}_0(\Omega))$, and $w \in V$ 
  \begin{align}\label{eq:Commutation}
    \partialt \Pi_\bfx \xi  = \Pi_\bfx \partialt \xi,\qquad
    \nablax \Pi_\bft v = \Pi_\bft \nablax v,\qquad
    \Pi_\bft \Pi_\bfx w  = \Pi_\bfx \Pi_\bft w.
  \end{align}
\end{lemma}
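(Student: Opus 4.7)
The three commutation identities all reduce to unpacking the pointwise-in-time (respectively pointwise-in-space) definitions of $\Pi_\bfx$ and $\Pi_\bft$, and using that bounded linear operators commute with Bochner integrals and weak Bochner derivatives. Thus the proof is largely structural and splits naturally into three independent steps.

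For the first identity, I would fix $j \in \nodesint_\bfx$ and observe that $\psi_{\bfx,j} \in W^{1,\infty}_0(\Omega)$, so the map $\eta \mapsto \langle \eta, \psi_{\bfx,j}\rangle_\Omega$ is a bounded linear functional on $W^{-1,2}(\Omega)$. A standard result about Bochner spaces then yields that for $\xi \in W^{1,2}(\mathcal{J};W^{-1,2}(\Omega))$ the scalar function $t \mapsto \langle \xi(t), \psi_{\bfx,j}\rangle_\Omega$ belongs to $W^{1,2}(\mathcal{J})$ with weak derivative $t \mapsto \langle \partialt \xi(t), \psi_{\bfx,j}\rangle_\Omega$. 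Since the basis function $b_{\bfx,j}$ is $t$-independent, differentiating the finite sum in the definition of $\Pi_\bfx \xi$ and moving $\partialt$ through the linear functional produces exactly $\Pi_\bfx \partialt \xi$.

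The second identity follows by the dual argument: the weights $\psi_{\bft,i}$ and basis functions $b_{\bft,i}$ depend only on~$t$, so for $v \in L^2(\mathcal{J};W^{1,2}_0(\Omega))$ the coefficient $\langle v,\psi_{\bft,i}\rangle_\mathcal{J}$ is a Bochner integral of a $W^{1,2}_0(\Omega)$-valued function against a scalar weight. Because $\nablax\colon W^{1,2}_0(\Omega) \to L^2(\Omega)^d$ is a bounded linear operator, it commutes with Bochner integration, giving $\nablax\langle v,\psi_{\bft,i}\rangle_\mathcal{J} = \langle \nablax v,\psi_{\bft,i}\rangle_\mathcal{J}$; multiplying by $b_{\bft,i}$ and summing over $i$ closes the identity. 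For the third identity, first note that by the stability statements of Theorem~\ref{thm:interpolOperator} both compositions are well defined for $w \in V$: we have $\Pi_\bfx w \in L^2(\mathcal{J};\mathcal{L}^1_{k_\bfx,0}(\triax)) \subset L^2(\mathcal{J};L^2(\Omega))$ and $\Pi_\bft w \in \mathcal{L}^1_{k_\bft}(\triat)\otimes L^2(\Omega) \subset L^2(\mathcal{J};L^2(\Omega))$. Expanding each composition by the definitions shows that both sides equal a double sum whose coefficients are iterated pairings of $w$ against $\psi_{\bft,i}(s)\psi_{\bfx,j}(x)$. Since $w \in L^2(\mathcal{J};L^2(\Omega))$ and both weights are bounded, the integrand $w(s,x)\psi_{\bft,i}(s)\psi_{\bfx,j}(x)$ lies in $L^1(\mathcal{J}\times\Omega)$, so Fubini's theorem justifies the interchange of the two integrations and both sides collapse to the tensor formula~\eqref{eq:Composition} for $\Pi_\otimes$.

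The main obstacle here is purely bookkeeping: identifying the function space in which each pairing lives and ensuring that the interchanges of weak derivatives with linear functionals in Bochner spaces, and of iterated integrals via Fubini, are each justified in the claimed regularity class. Once these spaces are correctly matched via the embeddings in $V$ and the boundedness of the weights, the algebraic manipulations are immediate and no further approximation argument is required.
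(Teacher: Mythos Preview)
Your proof is correct. It takes a somewhat different route from the paper's own argument: the paper first verifies all three identities for smooth functions (where they follow immediately from the explicit formulas~\eqref{eq:DefPIt},~\eqref{eq:DefPix},~\eqref{eq:Composition}) and then extends to the stated function classes by density, using the stability of $\Pi_\bft$ and $\Pi_\bfx$ from Theorem~\ref{thm:interpolOperator}. You instead work directly in the Bochner framework, using that bounded linear functionals commute with weak time derivatives, bounded linear operators commute with Bochner integrals, and Fubini for the third identity. Your approach is more self-contained in that it needs the stability results only peripherally (for well-definedness of the compositions), while the paper's density argument cleanly separates the algebraic content from the limiting step and makes the role of stability explicit. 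Both are short and valid.
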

\begin{proof}
  For smooth functions the commutation properties follow directly from the identities in~\eqref{eq:DefPIt},~\eqref{eq:DefPix}, and~\eqref{eq:Composition}. 
  For general functions in the respective function spaces the identities follow by density arguments and the stability properties of $\Pi_t$ and $\Pi_x$ according to Theorem~\ref{thm:interpolOperator}.  
\end{proof}
The properties of $\Pi_\bfx$ and $\Pi_\bft$ lead to the following stability result.
\begin{theorem}[Stability]\label{thm:stabPiotimes}
  We have for all $v\in V$, $K = K_\bft \times K_\bfx \in \mathcal{Q}$, and $m,\ell \in \mathbb{N}_0$
  \begin{align*}
    \norm{\partialt^m \nablax^\ell \Pi_\otimes v}_{L^2(K_\bft;L^2(K_\bfx))} 
    & \lesssim \norm{\partialt^m \nablax^\ell v}_{L^2(\omega_{K_\bft};L^2(\omega_{K_\bfx}))},\\
    \norm{\partialt^m \Pi_\otimes v}_{L^2(K_\bft;W^{-1,2}(\Omega))} 
    & \lesssim \norm{\partialt^m v}_{L^2(\omega_{K_\bft};W^{-1,2}(\Omega))}. 
  \end{align*}
\end{theorem}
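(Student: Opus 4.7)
The plan is to handle the two factors of $\Pi_\otimes = \Pi_\bft \Pi_\bfx$ in turn, shifting derivatives through them using the commutation identities of Lemma~\ref{lem:Commutation} and then invoking the one-dimensional local stability bounds supplied by Theorems~\ref{thm:InterpolInTime} and~\ref{thm:interpolOperator}. As a preliminary step I would extract from Theorem~\ref{thm:InterpolInTime} (by specializing $s=m$ and combining with the triangle inequality) the temporal stability bound
\[
  \norm{\partialt^m \Pi_\bft w}_{L^2(K_\bft; X)} \lesssim \norm{\partialt^m w}_{L^2(\omega_{K_\bft}; X)},
\]
valid both for $X = L^2(\Kx)$ with $\Kx \in \triax$ and for $X = W^{-1,2}(\omega)$ with $\omega \subset \Omega$.

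For the first estimate I would then use $\nablax \Pi_\bft = \Pi_\bft \nablax$ to bring $\nablax^\ell$ to the right of $\Pi_\bft$, apply the temporal stability just established with $X = L^2(K_\bfx)$, and finally commute $\partialt^m$ past $\Pi_\bfx$:
\[
  \norm{\partialt^m \nablax^\ell \Pi_\otimes v}_{L^2(K_\bft; L^2(K_\bfx))}
  = \norm{\partialt^m \Pi_\bft \nablax^\ell \Pi_\bfx v}_{L^2(K_\bft; L^2(K_\bfx))}
  \lesssim \norm{\nablax^\ell \Pi_\bfx \partialt^m v}_{L^2(\omega_{K_\bft}; L^2(K_\bfx))}.
\]
A pointwise-in-time application of the local $\nablax^\ell$-stability of $\Pi_\bfx$ from Theorem~\ref{thm:interpolOperator}, followed by integration over $\omega_{K_\bft}$, then produces the desired bound by $\norm{\partialt^m \nablax^\ell v}_{L^2(\omega_{K_\bft}; L^2(\omega_{K_\bfx}))}$.

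For the second estimate I would argue analogously but now with $X = W^{-1,2}(\Omega)$, exchange $\partialt^m$ with $\Pi_\bfx$, and close the chain with the global $W^{-1,2}(\Omega)$-stability of $\Pi_\bfx$ from Theorem~\ref{thm:interpolOperator}:
\[
  \norm{\partialt^m \Pi_\otimes v}_{L^2(K_\bft; W^{-1,2}(\Omega))}
  \lesssim \norm{\Pi_\bfx \partialt^m v}_{L^2(\omega_{K_\bft}; W^{-1,2}(\Omega))}
  \lesssim \norm{\partialt^m v}_{L^2(\omega_{K_\bft}; W^{-1,2}(\Omega))}.
\]

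The only real obstacle I anticipate is bookkeeping rather than mathematics: each commutation and each pointwise-in-one-variable evaluation must be justified at the appropriate Bochner regularity, which is handled by the same density argument that underlies Lemma~\ref{lem:Commutation} together with the convention in the \emph{Notation} paragraph that norms are set to $+\infty$ whenever the argument fails to lie in the relevant space, so that the chain of inequalities is vacuously valid when the right-hand side diverges.
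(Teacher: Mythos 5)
Your argument follows the same route as the paper: commute derivatives through $\Pi_\bft$ and $\Pi_\bfx$ via Lemma~\ref{lem:Commutation}, apply the temporal bound of Theorem~\ref{thm:InterpolInTime} with the appropriate Bochner target $X$, and close with the (local or global) stability of $\Pi_\bfx$ from Theorem~\ref{thm:interpolOperator}. The only small extra service you render is spelling out that the $\Pi_\bft$-stability is extracted from the approximation estimate of Theorem~\ref{thm:InterpolInTime} with $s=m$ together with the triangle inequality, and writing out the $L^2$-case which the paper dispatches with ``a similar calculation''.
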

\begin{proof}
  Let $v \in V$ and let $m,\ell \in \mathbb{N}_0$. 
  By the stability properties of $\Pi_\bfx$ in Theorem~\ref{thm:interpolOperator} and $\Pi_\bft$ in Theorem~\ref{thm:InterpolInTime} using the commutation properties in Lemma~\ref{lem:Commutation} we have that
  \begin{align*}
  & \norm{\partialt^m \Pi_\otimes v}_{L^2(K_\bft;W^{-1,2}(\Omega))} 
     = \norm{ \partialt^m \Pi_\bft \Pi_\bfx v}_{L^2(K_\bft;W^{-1,2}(\Omega))}\\
    &  \lesssim \norm{ \partialt^m \Pi_\bfx v }_{L^2(\omega_{K_\bft};W^{-1,2}(\Omega))} 
      = \norm{ \Pi_\bfx  \partialt^m v}_{L^2(\omega_{K_\bft};W^{-1,2}(\Omega))} 
      \lesssim \norm{ \partialt^m v }_{L^2(\omega_{K_\bft};W^{-1,2}(\Omega))}.
  \end{align*} 
  A similar calculation leads to local stability in the $L^2$-norms. 
\end{proof}
The approximation properties of $\Ix$ and $\It$ result in the following estimates. 
Let $\mathcal{V}_x$ denote the set of vertices in $\triax$ and let $\omega_{x,j} \coloneqq \bigcup \lbrace T \in \triax\colon j \in T\rbrace$ and 
$\omega_{x,j}^2 \coloneqq \bigcup\lbrace T\in \triax \colon T \cap \omega_{x,j} \neq \emptyset  \rbrace$ denote the spatial nodal patches for $j\in \mathcal{V}_x$. 
\begin{theorem}[Approximation on tensor meshes]\label{thm:ApxTensorMeshes}
  We have for all $v\in V$, all time-space cells $K = \Kt\times \Kx\in \mathcal{Q}$, and all integers $0\leq m\leq  s\leq k_\bft + 1$ the estimate 
  \begin{align} \label{eq:ApxIotimes1}
    \begin{aligned}
      &\norm{ \partialt^m (v - \Pi_\otimes v)}_{L^2(\Kt;W^{-1,2}(\Omega))} 
      \lesssim h_\Kt^{s-m} \norm{\partialt^{s} v}_{L^2(\omega_\Kt;W^{-1,2}(\Omega))} \\
      &\qquad  \qquad + \Big( \sum_{j \in \mathcal{V}_x} \norm{\partialt^m v-\Pi_x \partialt^m v}_{L^2(\omega_\Kt;W^{-1,2}(\omega_{x,j}))}^2\Big)^{1/2}.
    \end{aligned}
  \end{align}
  The local interpolation error in space with vertex $j \in \mathcal{V}$ are bounded for integers $0 \leq r \leq k_\bfx + 1$  by 
  \begin{align}\label{eq:interSpaceTimeEasy}
    \begin{aligned}
      \norm{\partialt^m v-  \Pi_x\partialt^m v}_{L^2(\omega_\Kt;W^{-1,2}(\omega_{x,j}))} & \lesssim  \norm{\partialt^m v}_{L^2(\omega_\Kt;W^{-1,2}(\omega^2_{x,j}))},\\
      \norm{\partialt^m v-\Pi_x \partialt^m v }_{L^2(\omega_\Kt;W^{-1,2}(\omega_{x,j}))} & \lesssim h_\Kx^{r+1} \norm{\partialt^m \nablax^{r} v}_{L^2(\omega_\Kt;L^2(\omega_{x,j}^2))}.
    \end{aligned}
  \end{align}
  Furthermore, we have the local $L^2$-estimate for all integers $0\leq m \leq s \leq k_t+1$ and $0\leq \ell \leq r \leq k_x+1$ as
  \begin{align}\label{eq:ApxIotimes2}
    \begin{aligned}
      &\norm{ \partialt^m \nablax^\ell (v-\Pi_\otimes v)}_{L^2(\Kt;L^2(\Kx))} \\
      &\qquad \lesssim h_\Kt^{s-m} \norm{ \partialt^{s} \nablax^\ell v }_{L^2(\omega_\Kt;L^2(\Kx))} + h_\Kx^{r-\ell} \norm{ \partialt^m \nablax^r v }_{L^2(\omega_\Kt;L^2(\omega_\Kx))}.
    \end{aligned}
  \end{align}
\end{theorem}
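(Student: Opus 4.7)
The plan is to deduce all three estimates from the tensor-product decomposition
\begin{align*}
v - \Pi_\otimes v = (v - \It v) + \It (v - \Ix v),
\end{align*}
combined with the commutation identities of Lemma~\ref{lem:Commutation} and the approximation/stability bounds for $\It$ and $\Ix$ from Theorem~\ref{thm:InterpolInTime} and Theorem~\ref{thm:interpolOperator}. The commutation identities are what let the differential operators $\partialt^m$ and $\nablax^\ell$ pass through whichever one-dimensional projection does not act in the variable being differentiated.

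For~\eqref{eq:ApxIotimes1} I would estimate the two pieces of the decomposition separately. Since $\partialt^m$ commutes with $\It$, Theorem~\ref{thm:InterpolInTime} applied with $X = W^{-1,2}(\Omega)$ gives the first summand $h_\Kt^{s-m}\norm{\partialt^s v}_{L^2(\omega_\Kt; W^{-1,2}(\Omega))}$. For the second piece, Lemma~\ref{lem:Commutation} yields $\partialt^m \It(v - \Ix v) = \It(\partialt^m v - \Ix \partialt^m v)$; the $m = 0$ case of Theorem~\ref{thm:InterpolInTime} (i.e.\ the local $L^2(\Kt; X)$-stability of $\It$ appearing as~\eqref{eq:inverseLikeEst} in its proof) with $X = W^{-1,2}(\Omega)$ absorbs the outer $\It$, and then the $W^{-1,2}$-localization of Theorem~\ref{thm:interpolOperator} applied pointwise in $\tau \in \omega_\Kt$ and squared-integrated in time converts $\norm{\partialt^m v - \Ix \partialt^m v}_{L^2(\omega_\Kt; W^{-1,2}(\Omega))}$ into the displayed sum over $j \in \mathcal{V}_x$.

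The two inequalities in~\eqref{eq:interSpaceTimeEasy} are a direct transcription to Bochner space of the stability and approximation bounds for $\Ix$ in Theorem~\ref{thm:interpolOperator}: apply them pointwise in $\tau$ to $\partialt^m v(\tau, \cdot)$, square, and integrate over $\omega_\Kt$. The local $L^2$-estimate~\eqref{eq:ApxIotimes2} follows by the same splitting with $X = L^2(\Kx)$: the first piece $\partialt^m \nablax^\ell(v - \It v) = \partialt^m(\nablax^\ell v - \It \nablax^\ell v)$ is handled by Theorem~\ref{thm:InterpolInTime} with $X = L^2(\Kx)$, using the commutation $\nablax \It = \It \nablax$ to move $\nablax^\ell$ inside. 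For the second piece, after commuting $\partialt^m \nablax^\ell$ through $\It$ and $\Ix$, I apply local $L^2(\Kt; L^2(\Kx))$-stability of $\It$ and then the local $L^2$-approximation of $\Ix$ from Theorem~\ref{thm:interpolOperator}, obtaining the term $h_\Kx^{r-\ell}\norm{\partialt^m \nablax^r v}_{L^2(\omega_\Kt; L^2(\omega_\Kx))}$.

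The steps are technically routine once the decomposition is chosen, so the main point of care is not an estimate but a justification: the commutation identities of Lemma~\ref{lem:Commutation} must be invoked at the level of the rough regularity we actually have (namely $v \in V$, or $\partialt^m v \in L^2(\omega_\Kt; W^{-1,2}(\Omega))$ in the negative-norm part), and the required density/stability is already built into that lemma via the $W^{-1,2}$-stability of $\Ix$ from Theorem~\ref{thm:interpolOperator} and the $L^2(\Kt; X)$-stability of $\It$ from Theorem~\ref{thm:InterpolInTime}. Beyond this, the argument is pure bookkeeping.
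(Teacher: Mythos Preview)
Your two-term splitting $v-\Pi_\otimes v=(v-\It v)+\It(v-\Ix v)$ is essentially the paper's route: the paper uses the symmetric version $(v-\Ix v)+\Ix(v-\It v)$ and then further splits the second summand via the triangle inequality into $(v-\It v)$ and $(\delta-\Ix\delta)$ with $\delta=v-\It v$, giving three terms but the same ingredients. The treatment of~\eqref{eq:interSpaceTimeEasy} is identical to the paper's.

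There is, however, one genuine slip. You write that Lemma~\ref{lem:Commutation} yields
\[
\partialt^m \It(v-\Ix v)=\It(\partialt^m v-\Ix\partialt^m v),
\]
but Lemma~\ref{lem:Commutation} does \emph{not} assert $\partialt\It=\It\partialt$; it only gives $\partialt\Ix=\Ix\partialt$, $\nablax\It=\It\nablax$, and $\It\Ix=\Ix\It$. Indeed $\partialt^m\It w$ lands in time-polynomials of degree $k_t-m$ while $\It\partialt^m w$ lands in degree $k_t$, so these cannot agree in general. The same incorrect commutation reappears in your handling of the second piece for~\eqref{eq:ApxIotimes2}.

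The repair is immediate and does not change your decomposition. Do not commute $\partialt^m$ through $\It$; instead invoke Theorem~\ref{thm:InterpolInTime} with $s=m$ (the stability $\norm{\partialt^m(w-\It w)}_{L^2(\Kt;X)}\lesssim\norm{\partialt^m w}_{L^2(\omega_\Kt;X)}$) together with the triangle inequality to get
\[
\norm{\partialt^m\It w}_{L^2(\Kt;X)}\lesssim\norm{\partialt^m w}_{L^2(\omega_\Kt;X)}.
\]
Apply this with $w=v-\Ix v$ and $X=W^{-1,2}(\Omega)$ (resp.\ $X=L^2(\Kx)$ for~\eqref{eq:ApxIotimes2}); then the legitimate commutation $\partialt^m(v-\Ix v)=\partialt^m v-\Ix\partialt^m v$ from Lemma~\ref{lem:Commutation} delivers exactly the term you want, and the rest of your argument (localization in $x$ via Theorem~\ref{thm:interpolOperator}, etc.) goes through unchanged.
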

\begin{proof}
  Let $v\in V$ and let $\Kt\in \triat$.
  The local estimates in~\eqref{eq:interSpaceTimeEasy} 
  follow directly by Theorem~\ref{thm:interpolOperator}. 
  The triangle inequality yields with $\delta \coloneqq v - \Pi_\bft v$ that
  \begin{align}\label{eq:TriangleProof}
    \begin{aligned}
      &\norm{ \partialt^m (v - \Pi_\otimes v) }_{L^2(\Kt;W^{-1,2}(\Omega))} 
      \leq \norm{ \partialt^m (v - \Pi_\bfx v) }_{L^2(\Kt;W^{-1,2}(\Omega))} \\
      &\qquad  + \norm{ \partialt^m (v - \Pi_\bft v)}_{L^2(\Kt;W^{-1,2}(\Omega))} + \norm{ \partialt^m ( \delta - \Pi_\bfx\delta) }_{L^2(\Kt;W^{-1,2}(\Omega))}.
    \end{aligned}
  \end{align}
  Due to Theorem~\ref{thm:interpolOperator}, the commutation property in Lemma~\ref{lem:Commutation}, and the stability of $\Pi_t$ in Theorem~\ref{thm:InterpolInTime} the third addend  satisfies for $v \in W^{s,2}(\omega_{\Kt};W^{-1,2}(\Omega))$ that
  \begin{align*}
    & \norm{ \partialt^m ( \delta - \Pi_\bfx\delta) }^2_{L^2(\Kt;W^{-1,2}(\Omega))} 
      = \norm{ \partialt^m \delta - \Pi_\bfx\partialt^m \delta }_{L^2(\Kt;W^{-1,2}(\Omega))}^2\\
    & \eqsim \sum_{i\in \mathcal{V}_x} \norm{ \partialt^m \delta - \Pi_\bfx\partialt \delta }_{L^2(\Kt;W^{-1,2}(\omega_{x,j}))}^2 \lesssim \sum_{i\in \mathcal{V}_x} \norm{ \partialt^m v - \Pi_\bfx \partialt^m v }_{L^2(\omega_\Kt;W^{-1,2}(\omega_{x,j}))}^2.
  \end{align*}
The first term on the right-hand side of~\eqref{eq:TriangleProof} can be bounded  similarly.
By Theorem~\ref{thm:InterpolInTime} we obtain for the second term and all $0\leq m\leq  s\leq k_\bft + 1$ that
  \begin{align*}
    \norm{ \partialt^m (v-\Pi_t v)}_{L^2(\Kt;W^{-1,2}(\Omega))} \lesssim h_{K_\bft}^{s-m} \norm{ \partialt^{s} v}_{L^2(\omega_\Kt;W^{-1,2}(\Omega))}.
  \end{align*}
  Combining the previous estimates proves~\eqref{eq:ApxIotimes1}.
  Similar arguments prove the local $L^2$-estimate in~\eqref{eq:ApxIotimes2}.
\end{proof}
\begin{remark}[Applications]
  The discretization $V_h$ in~\eqref{eq:TensorFEspace} is used in \cite{StevensonWesterdiep20} and \cite{DieningStorn21}.
 The extension of the result to adaptively refined meshes is an open problem. 
 Such an adaptive refinement might result in unstructured simplicial meshes as in \cite{LangerSteinbachTroltzschYang21} or in varying spatial meshes associated to certain time steps \cite{Stevenson2022}. 
\end{remark}
\begin{remark}[Comparison]
  The projection operator $\Pi_\textup{FK} \colon V \to \mathcal{L}^1_1(\triat) \otimes \mathcal{L}^1_{1,0}(\triax)$ for tensor-product meshes in \cite[Thm.~7]{FuehrerKarkulik19} combines Lagrange interpolation in time and the $L^2$-projection $\Pi_2$ in space. 
This leads to an operator that is local in time but non-local in space. 
Let $h_\bfx \coloneqq \max_{\Kx \in \triax} h_\Kx$ denote the maximal mesh size in $\triax$, then this operator satisfies for all $v\in V$ and  any $\Kt\in \triat$
  \begin{align*}
    \norm{ \nablax (v - \Pi_\textup{FK} v) }_{L^2(\Kt,L^2(\Omega))} & \lesssim h_{K_\bft} \norm{ \partialt \nablax v }_{L^2(\Kt,L^2(\Omega))} + h_\bfx \norm{ \nablax^2 v}_{L^\infty(\Kt,L^2(\Omega))},\\
    \norm{ \partialt  (v - \Pi_\textup{FK} v) }_{L^2(\Kt;W^{-1,2}(\Omega))} & \lesssim h_{K_\bft} \norm{ \partialt^2 v }_{L^2(\Kt;W^{-1,2}(\Omega))} + \frac{h^2_\bfx}{h_\Kt} \norm{ \nablax v }_{L^\infty(\Kt;L^2(\Omega))}.
  \end{align*}
  This result requires more regular solutions than the estimate in Theorem~\ref{thm:ApxTensorMeshes}. 
\end{remark}

\subsection{Smoothing rough right-hand sides}\label{sec:SmoothingRHS}
Besides in parabolic problems, functions in negative Sobolev spaces occur as right-hand sides of elliptic PDEs. 
Such rough right-hand sides cause severe difficulties for several established numerical schemes like non-conforming, DPG, or DG methods. 
Indeed, they may lead to an ill-posed system or to a reduced rate of convergence compared to conforming schemes. 
Applying a smoothing operator to the right-hand side, as for example in~\cite{VeeserZanotti18,VeeserZanotti18b,VeeserZanotti19}, is one possible remedy. 
We illustrate the use of the operator $\Pi$ defined in Section~\ref{sec:higher-order} as such a smoothing operator by applying it to a least squares finite element method. 
We thereby complement the result in \cite[Sec.~3.3]{FHK.2022} for the lowest-order case $k=0$, by the higher-order cases $k\geq 1$. 
For this we use the discrete space defined below in~\eqref{eq:discSpaceXh}. 
For the simplicity of presentation we consider the Poisson equation as model problem.
Note that the ideas presented extend to a much wider class of PDEs as illustrated in \cite{CarstensenStorn18}. 

Given a function $f\in W^{-1,2}(\Omega)$ we seek the solution $u\in W^{1,2}_0(\Omega)$ to 
\begin{align}\label{eq:Poisson}
  -\Delta u = f \quad \text{ in }W^{-1,2}(\Omega).
\end{align}
We reformulate the problem as first order system given by
\begin{align}\label{eq:PMPfosls}
  \textup{div}\, \sigma + f = 0 \;\;\text{ in }W^{-1,2}(\Omega)\qquad\text{and}\qquad \nabla u - \sigma = 0 \;\; \text{ in }L^2(\Omega).
\end{align}
Let the space $H(\textup{div},\Omega)$ and the Raviart--Thomas space for $k\in \mathbb{N}_0$ be given by
\begin{align*}
  H(\textup{div},\Omega) & \coloneqq \lbrace \tau \in L^2(\Omega)^d \colon \textup{div}\, \tau \in L^2(\Omega)\rbrace,\\ 
  \RTk &\coloneqq \lbrace \tau_h \in H(\textup{div},\Omega) \colon  \tau_h(x)|_T \in \mathcal{P}_k(T)^d + x \mathcal{P}_k(T)\text{ for all }T\in \tria\rbrace.
\end{align*}
The general idea of first-order system least square methods (FOSLS) consists in minimizing the residuals of~\eqref{eq:PMPfosls} in the squared $L^2$-norms over discrete spaces as
\begin{align}\label{eq:discSpaceXh}
  X_h \coloneqq \mathcal{L}^1_{k+1,0}(\tria) \times \RTk.
\end{align}
However, the $L^2$-norm of the first residual is not well-defined for non square-integrable  $f\in W^{-1,2}(\Omega)\setminus L^2(\Omega)$. 
We remedy this issue by using the operator $\Pi\colon W^{-1,2}(\Omega)\to \mathcal{L}^1_{k}(\tria)$ as a smoother and instead compute the minimizer 
\begin{align}\label{eq:LSFEMreg}
  (\tilde{u}_h,\tilde{\sigma}_h) = \argmin_{(v_h,\tau_h)\in X_h} \lVert \textup{div}\, \tau_h + \Pi f\rVert_{L^2(\Omega)}^2 + \lVert \nabla v_h - \tau_h \rVert_{L^2(\Omega)}^2.
\end{align}
\begin{theorem}[A~priori estimate]
  Let $k\in \mathbb{N}$.
  With the minimizers $\tilde{u}_h,\tilde{\sigma}_h$ in~\eqref{eq:LSFEMreg} and the solution $u$ to~\eqref{eq:Poisson} one has for all $(s_j)_{j\in \mathcal{V}} \subset \lbrace 0 ,\dots, k+1 \rbrace$ that
  \begin{align*}
    &\lVert \nabla (u - \tilde{u}_h) \rVert_{L^2(\Omega)}^2 + \lVert \nabla u - \tilde{\sigma}_h \rVert_{L^2(\Omega)}^2\\
    &\qquad\lesssim \sum_{j\in\mathcal{V}} h_j^{2s_j} \lVert f \rVert^2_{W^{-1+s_j,2}(\omega^2_j)} + \min_{(v_h,\tau_h) \in X_h} \big(\lVert \nabla (u - v_h) \rVert_{L^2(\Omega)}^2 + \lVert \nabla u - \tau_h \rVert_{L^2(\Omega)}^2\big). 
  \end{align*}
  The hidden constant depends solely on the shape regularity of $\tria$ and the domain $\Omega$.
\end{theorem}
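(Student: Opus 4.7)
The plan is to compare the discrete scheme with the continuous FOSLS posed with the smoothed right-hand side $\Pi f \in L^2(\Omega)$ and then to quantify the smoothing error via Theorem~\ref{thm:interpolOperator}. Concretely, I would introduce $\bar u \in W^{1,2}_0(\Omega)$ solving $-\Delta \bar u = \Pi f$ and $\bar\sigma := \nabla \bar u \in H(\textup{div},\Omega)$, for which the Riesz isometry $-\Delta\colon W^{1,2}_0(\Omega) \to W^{-1,2}(\Omega)$ yields
\[
\|\nabla(u-\bar u)\|_{L^2(\Omega)} = \|\nabla u - \bar \sigma\|_{L^2(\Omega)} = \|f-\Pi f\|_{W^{-1,2}(\Omega)}.
\]
Since $\textup{div}\, \bar\sigma = -\Pi f\in L^2(\Omega)$, the pair $(\bar u,\bar\sigma) \in W^{1,2}_0(\Omega) \times H(\textup{div},\Omega)$ is the unique continuous minimizer of the least squares functional with data $\Pi f$.

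The well-known functional identity
\[
\|\textup{div}\,\tau + \Pi f\|_{L^2}^2 + \|\nabla v - \tau\|_{L^2}^2 = \|\textup{div}(\tau-\bar\sigma)\|_{L^2}^2 + \|\nabla(v-\bar u)-(\tau-\bar\sigma)\|_{L^2}^2
\]
combined with the classical norm equivalence on $W^{1,2}_0(\Omega)\times H(\textup{div},\Omega)$ establishes that $(\tilde u_h,\tilde\sigma_h)$ is a quasi-best approximation of $(\bar u,\bar\sigma)$ in $X_h$ in the product norm $(\|\nabla \cdot\|_{L^2}^2 + \|\cdot\|_{H(\textup{div})}^2)^{1/2}$. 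I would estimate the resulting infimum by taking $v_h$ as an optimal Lagrange approximation of $\bar u$ in $\mathcal{L}^1_{k+1,0}(\tria)$ and $\tau_h \in \RTk$ with $\textup{div}\,\tau_h = -\Pi f$; such a $\tau_h$ exists because $\Pi f \in \mathcal{L}^1_k(\tria)$ is elementwise in $\mathcal{P}_k$ and the divergence operator from $\RTk$ to the space of elementwise $\mathcal{P}_k$-functions is surjective with a bounded right inverse. The divergence contribution then vanishes, and the residual $\|\bar\sigma - \tau_h\|_{L^2}$ is bounded by the best $\RTk$-approximation of $\nabla \bar u$ in $L^2$, which I would in turn control by a jump-based $\RTk$-reconstruction of the piecewise polynomial discrete gradient $\nabla v_h$.

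Triangle inequalities, together with $\inf_{v_h}\|\nabla(\bar u - v_h)\|_{L^2} \leq \inf_{v_h}\|\nabla(u-v_h)\|_{L^2} + \|\nabla(u-\bar u)\|_{L^2}$, then bound the left-hand side of the theorem by $\inf_{v_h \in \mathcal{L}^1_{k+1,0}(\tria)}\|\nabla(u-v_h)\|_{L^2(\Omega)}^2 + \|f-\Pi f\|_{W^{-1,2}(\Omega)}^2$. The smoothing error is finally treated using localization and approximation properties of $\Pi$: Theorem~\ref{thm:interpolOperator} yields $\|f-\Pi f\|_{W^{-1,2}(\Omega)}^2 \eqsim \sum_{j \in \mathcal{V}}\|f-\Pi f\|_{W^{-1,2}(\omega_j)}^2$, and the approximation estimates from the same theorem, interpolated to fractional orders as in Remark~\ref{rem:FracOrder}, give the local bound $\|f-\Pi f\|_{W^{-1,2}(\omega_j)}^2 \lesssim h_j^{2s_j}\|f\|_{W^{-1+s_j,2}(\omega_j^2)}^2$ for every $s_j \in [0,k+1]$. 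The main obstacle I foresee is the $L^2$-control of $\bar\sigma - \tau_h$ without invoking any regularity of $\bar u$ beyond the energy space; the resolution is the $\RTk$-reconstruction of $\nabla v_h$ with controlled normal-jump defect, which reduces the flux approximation error to the Lagrange approximation error of $u$.
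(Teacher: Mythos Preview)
Your overall strategy matches the paper's: introduce the auxiliary solution $\bar u$ with $-\Delta\bar u=\Pi f$, invoke quasi-optimality of the LSFEM with smoothed data, split via triangle inequalities, and control $\lVert f-\Pi f\rVert_{W^{-1,2}(\Omega)}$ through the localization and approximation properties in Theorem~\ref{thm:interpolOperator}. The gap is in your flux step. Invoking a bounded right inverse of $\textup{div}\colon\RTk\to\mathcal{L}^0_k(\tria)$ gives some $\tau_h$ with $\textup{div}\,\tau_h=-\Pi f$, but it does \emph{not} yield the claimed bound $\lVert\bar\sigma-\tau_h\rVert_{L^2}\lesssim\min_{\rho_h\in\RTk}\lVert\bar\sigma-\rho_h\rVert_{L^2}$; a generic right inverse only controls $\lVert\tau_h\rVert_{L^2}$ by $\lVert\Pi f\rVert_{L^2}$, which is useless here. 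Your proposed fix---a jump-averaging $\RTk$-reconstruction of $\nabla v_h$---produces a function whose distance to $\nabla v_h$ is governed by the normal jumps of $\nabla v_h$ across interior faces. Bounding those jumps by $\lVert\nabla(u-v_h)\rVert_{L^2}$ would require $L^2$ face traces of $\nabla u$ (or $\nabla\bar u$), i.e., regularity strictly beyond the energy space you explicitly want to avoid. Moreover, such a reconstruction does not satisfy the divergence constraint $\textup{div}\,\tau_h=-\Pi f$, so the divergence contribution would not vanish either.

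The paper closes this gap with a single tool: the commuting Raviart--Thomas quasi-interpolant $\mathcal{I}_{\mathcal{RT}}$ of Ern and Guermond, which is defined on all of $H(\textup{div},\Omega)$, satisfies $\textup{div}\,\mathcal{I}_{\mathcal{RT}}\sigma=\Pi_2^{\textup{pw}}\textup{div}\,\sigma$, and obeys $\lVert\sigma-\mathcal{I}_{\mathcal{RT}}\sigma\rVert_{L^2}\lesssim\min_{\tau^{\textup{pw}}\in\mathcal{L}^0_k(\tria)^d}\lVert\sigma-\tau^{\textup{pw}}\rVert_{L^2}$. Setting $\tau_h=\mathcal{I}_{\mathcal{RT}}\nabla\bar u$ makes the divergence contribution vanish exactly (because $\textup{div}\,\nabla\bar u=-\Pi f\in\mathcal{L}^0_k(\tria)$) and bounds $\lVert\nabla\bar u-\tau_h\rVert_{L^2}$ by the best \emph{discontinuous} piecewise $\mathcal{P}_k^d$-approximation of $\nabla\bar u$. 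The latter is trivially at most $\lVert\nabla(\bar u-v_h)\rVert_{L^2}$ since $\nabla v_h\in\mathcal{L}^0_k(\tria)^d$ elementwise, so the flux error is absorbed into the Lagrange approximation error without any jump argument or extra regularity.
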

\begin{proof}
  Let $\tilde{u} \in W^{1,2}_0(\Omega)$ solve the auxiliary problem $-\Delta \tilde{u} = \Pi f$ in $W^{-1,2}(\Omega)$.
  Applying the triangle inequality yields 
  \begin{align*}
    &\lVert \nabla (u-\tilde{u}_h) \rVert_{L^2(\Omega)} + \lVert \nabla u - \tilde{\sigma}_h \rVert_{L^2(\Omega)} \\
    &\qquad \leq 2 \lVert \nabla (u-\tilde{u}) \rVert_{L^2(\Omega)} + \lVert \nabla ( \tilde{u} - \tilde{u}_h)\rVert_{L^2(\Omega)} + \lVert \nabla \tilde{u} - \tilde{\sigma}_h\rVert_{L^2(\Omega)}.
  \end{align*}
  The latter two addends are bounded due to the quasi-optimality \cite[Thm.~5.30]{BochevGunzburger09} of the LSFEM with right-hand side $\Pi f$ by
  \begin{align*}
    &\lVert \nabla ( \tilde{u} - \tilde{u}_h)\rVert_{L^2(\Omega)} + \lVert \nabla \tilde{u} - \tilde{\sigma}_h\rVert_{L^2(\Omega)}+  \lVert \textup{div}\,\nabla\tilde{u} - \textup{div}\,\tilde{\sigma}_h\rVert_{L^2(\Omega)}\\
    &\quad \lesssim \min_{(v_h,\tau_h) \in X_h} \big(\lVert \nabla ( \tilde{u} - v_h)\rVert_{L^2(\Omega)} + \lVert \nabla \tilde{u} - \tau_h \rVert_{L^2(\Omega)} + \lVert \textup{div}\,\nabla\tilde{u} - \textup{div}\,\tau_h\rVert_{L^2(\Omega)}\big).
  \end{align*}
  By the triangle inequality the first term can be bounded as
  \begin{align*}
    \min_{v_h \in \mathcal{L}^1_{k+1,0}(\tria)} \lVert \nabla ( \tilde{u} - v_h)\rVert_{L^2(\Omega)} \leq \lVert \nabla (u-\tilde{u}) \rVert_{L^2(\Omega)} + \min_{v_h \in \mathcal{L}^1_{k+1,0}(\tria)}\lVert \nabla (u-v_h)\rVert_{L^2(\Omega)}.
  \end{align*}
  Let $\Pi_2^\textup{pw}\colon L^2(\Omega) \to \mathcal{L}^0_{k}(\tria)$ be the $L^2$-orthogonal projection onto piece-wise polynomials of degree $k\in \mathbb{N}$. 
  The interpolation operator $\mathcal{I}_\mathcal{RT}\colon H(\textup{div},\Omega) \to \RTk$ in \cite[Thm.~23.12]{ErnGuermond21} satisfies $\Pi_2^\textup{pw} \textup{div}\,\sigma = \textup{div}\,\mathcal{I}_\mathcal{RT}\sigma$ and
  \begin{align*}
    \lVert \sigma - \mathcal{I}_\mathcal{RT}\sigma\rVert_{L^2(\Omega)} \lesssim \min_{\tau_h \in \RTk} \lVert \sigma - \tau_h \rVert_{L^2(\Omega)}\qquad\text{for all }\sigma \in H(\textup{div},\Omega).
  \end{align*}
  Since $-\textup{div}\, \nabla \tilde{u} = \Pi f \in \mathcal{L}^1_{k}(\tria)\subset \mathcal{L}^0_{k}(\tria)$, the application of $\mathcal{I}_\mathcal{RT}$ to $\sigma \coloneqq \nabla \tilde{u}$ shows
  \begin{align*}
    &\min_{\tau_h \in \RTk} \lVert \nabla \tilde{u} - \tau_h \rVert_{L^2(\Omega)} + \lVert \Pi f + \textup{div}\,\tau_h\rVert_{L^2(\Omega)}\\
    &\qquad \leq \lVert \nabla \tilde{u} - \mathcal{I}_\mathcal{RT}\nabla \tilde{u} \rVert_{L^2(\Omega)} + \lVert -\textup{div}\, \nabla \tilde{u} + \textup{div}\,\mathcal{I}_\mathcal{RT}\nabla \tilde{u}\rVert_{L^2(\Omega)}\\
  &\qquad \lesssim \min_{\tau_h \in \RTk} \lVert \nabla \tilde{u} - \tau_h \rVert_{L^2(\Omega)}. 
  \end{align*}
  Theorem~\ref{thm:interpolOperator} bounds  
  $\lVert \nabla (u-\tilde{u}) \rVert_{L^2(\Omega)} =\lVert f - \Pi f \rVert_{W^{-1,2}(\Omega)}$. Combining all estimates concludes the proof.
\end{proof}

\appendix
\section{Averaged Taylor polynomial in Bochner spaces}
\label{apx:verificationLemPolyApxInTime}
In this appendix we introduce the averaged Taylor polynomial in Bochner spaces $L^p(I;X)$ for an open interval $I$ of size~$h_t>0$ and a Banach space~$X$.
We adapt the techniques employed in~\cite[Ch.~4]{BS.2008} to our need.
Recall that $\mathcal{P}_s(I) \otimes X$ is the space of $X$-valued polynomials of order at most~$s \in \setN_0$. 
Let $\eta \in C^\infty_0(I)$ be a normalized density function with $\norm{ \eta}_{L^1(I)} = 1$ and $\norm{\partialt^m \eta }_{L^\infty(I)} \lesssim h_t^{-m-1}$ for any $m\in \mathbb{N}_0$. 
Such a function can be obtained by scaling and translation of the standard bump function on the unit interval.
For $v \in W^{s,1}(I;X)$ with $s\in \mathbb{N}$ we define the averaged Taylor polynomial $T^{s} v \in \mathcal{P}_s(I)\otimes X$ for all $\tau \in I$ by 
\begin{align}\label{eq:AvgTaylor}
  \begin{aligned}
    (T^s v) (\tau)& \coloneqq \sum_{\ell =0}^s \frac{1}{\ell!} \int_{I} (\partialt^\ell v)(\sigma)\;(\tau-\sigma)^\ell \eta (\sigma)\dsig\\
    &\hphantom{:} = \sum_{\ell =0}^s \frac{(-1)^\ell}{\ell!} \int_{I} v(\sigma)\, \partialt^\ell \big((\tau-\sigma)^\ell\eta (\sigma)\big)\dsig.
  \end{aligned}
\end{align}
The second formula allows us to define the polynomial for functions~$v \in L^1(I;X)$.
\begin{lemma}[Averaged Taylor polynomial]
  \label{lem:PolyApxInTime}
  The averaged Taylor polynomial of order $s\in \mathbb{N}_0$ has the following properties.
  \begin{enumerate}
  \item \label{itm:abgTay2} For all $v_s \in \mathcal{P}_s(I)\otimes X$ we have the identity $v_s = T^s v_s$.
  \item \label{itm:abgTay3} It holds $\partialt^\ell T^s w = T^{s-\ell} \partialt^\ell w$ for all $\ell \in \mathbb{N}_0$ with $\ell \leq s$ and $w\in W^{\ell,1}(I;X)$.
  \item \label{itm:avgTayStab} For $0\leq m \leq s$ and all $v \in L^1(I;X)$ there holds
    \begin{align*}
      \norm{\partial_t^m T^s v}_{L^\infty(I;X)} &\lesssim  h_t^{-1}                     \norm{\partial_t^m v}_{L^1(I;X)}.
    \end{align*}
  \item \label{itm:avgTayStabp} For $p\in [1,\infty]$, $0\leq m \leq s$, and all $v \in W^{m,p}(I;X)$ there holds
    \begin{align*}
      \norm{\partial_t^m T^s v}_{L^p(I;X)} &\lesssim                      \norm{\partial_t^m v}_{L^p(I;X)}.
    \end{align*}
  \item \label{itm:avgTayApprox} For $p\in [1,\infty]$, $0 \leq m \leq n \leq s+1$, and all $v\in W^{n,p}(I;X)$ there holds
    \begin{align*}
      \norm{ \partialt^m (v - T^s v) }_{L^p(I;X)} \lesssim h_t^{n-m} \norm{ \partialt^n v }_{L^p(I;X)}.
    \end{align*}
  \end{enumerate}
 The hidden constants solely depend on $s$.
\end{lemma}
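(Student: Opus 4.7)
The plan is to follow the classical Brenner--Scott strategy for averaged Taylor polynomials and to verify that every manipulation survives the passage from scalar- to Banach-space-valued integrands via Bochner integration.

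For \ref{itm:abgTay2} I fix $\tau\in I$ and invoke the pointwise Taylor expansion of $v_s$ at $\sigma$ evaluated at $\tau$: since $v_s$ has degree at most $s$ in $t$, the expansion is exact and reads $v_s(\tau) = \sum_{\ell=0}^s \tfrac{(\tau-\sigma)^\ell}{\ell!}(\partial_t^\ell v_s)(\sigma)$ for every $\sigma\in I$. Multiplying by $\eta(\sigma)$, integrating over $I$ and using $\norm{\eta}_{L^1(I)}=1$ gives $v_s=T^sv_s$. Property \ref{itm:abgTay3} follows by differentiating the first representation in \eqref{eq:AvgTaylor} under the Bochner integral with respect to $\tau$: each $\partial_\tau$ shifts the summation index down by one and eventually turns $(\partial_\sigma^\ell v)(\sigma)(\tau-\sigma)^\ell$ into $(\partial_\sigma^{\ell+1} v)(\sigma)(\tau-\sigma)^{\ell}$, so after $\ell$ differentiations and a reindexing one recovers $T^{s-\ell}\partial_t^\ell v$. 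Differentiation under the integral is legitimate since $\partial_\tau$ only touches the polynomial factor.

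For \ref{itm:avgTayStab} I first use \ref{itm:abgTay3} to reduce to $m=0$ with $w\coloneqq\partial_t^m v$ and order $r\coloneqq s-m$, and then work from the second form of \eqref{eq:AvgTaylor}. Pulling the $X$-norm inside and applying Leibniz to $\partial_\sigma^\ell\bigl((\tau-\sigma)^\ell\eta(\sigma)\bigr)$ bounds every summand by $|\tau-\sigma|^{\ell-k}\norm{\partial^{\ell-k}\eta}_{L^\infty(I)}\lesssim h_t^{\ell-k}\cdot h_t^{-(\ell-k)-1}=h_t^{-1}$, uniformly in $\sigma,\tau\in I$, and hence $\norm{(T^rw)(\tau)}_X\lesssim h_t^{-1}\norm{w}_{L^1(I;X)}$. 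The $L^p$-version \ref{itm:avgTayStabp} then follows by chaining \ref{itm:avgTayStab} with H\"older's inequality on the right (gaining a factor $h_t^{1-1/p}$ to pass from $L^p$ to $L^1$ on the input side) and with an inverse estimate on the left: since $\partial_t^m T^sv$ is an $X$-valued polynomial of bounded degree on $I$, its $L^p(I;X)$- and $L^\infty(I;X)$-norms are equivalent up to a factor $h_t^{1/p}$; the two factors of $h_t$ cancel.

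The approximation estimate \ref{itm:avgTayApprox} is the main obstacle. By \ref{itm:abgTay3} it reduces to showing $\norm{w-T^rw}_{L^p(I;X)}\lesssim h_t^j\norm{\partial_t^j w}_{L^p(I;X)}$ for $w\coloneqq\partial_t^m v$, $r\coloneqq s-m$ and $0\leq j\leq r+1$; the case $j=0$ is already contained in \ref{itm:avgTayStabp}. In the top case $j=r+1$ I apply Taylor's theorem with integral remainder to $w$: for every $\sigma,\tau\in I$,
\[
w(\tau) = \sum_{\ell=0}^r \tfrac{(\tau-\sigma)^\ell}{\ell!}(\partial_\sigma^\ell w)(\sigma) + R_r(\tau,\sigma),\qquad R_r(\tau,\sigma) = \tfrac{1}{r!}\int_\sigma^\tau (\tau-\xi)^r\,\partial_\xi^{r+1}w(\xi)\,\mathrm{d}\xi.
\]
Multiplying by $\eta(\sigma)$ and integrating over $I$ kills the reproduced polynomial part by \ref{itm:abgTay2} and leaves $w(\tau)-(T^rw)(\tau)=\int_I R_r(\tau,\sigma)\eta(\sigma)\,\mathrm{d}\sigma$; estimating $|(\tau-\xi)^r|\leq h_t^r$ and interchanging $\norm{\cdot}_X$ with the integrals via Bochner--Minkowski gives $\norm{(w-T^rw)(\tau)}_X\lesssim h_t^r\norm{\partial_t^{r+1}w}_{L^1(I;X)}$, and H\"older upgrades this to the required $L^p$-bound with an extra factor $h_t$. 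For intermediate $1\leq j\leq r$, since $T^r$ reproduces $\mathcal{P}_{j-1}(I)\otimes X\subset\mathcal{P}_r(I)\otimes X$ by \ref{itm:abgTay2}, I write $w-T^rw=(w-T^{j-1}w)-T^r(w-T^{j-1}w)$, apply the top-case bound with polynomial order $j-1$ to $w-T^{j-1}w$, and control the second summand by \ref{itm:avgTayStabp}. The principal technical care, and the place where scalar-valued proofs need adaptation, lies in justifying the integral Taylor remainder for $X$-valued absolutely continuous functions, in the applications of Bochner--Minkowski when commuting $\norm{\cdot}_X$ with the iterated integrals in $\sigma$ and $\xi$, and in a density argument to pass from smooth $v\in C^\infty(\overline I;X)$ to general $v\in W^{n,p}(I;X)$.
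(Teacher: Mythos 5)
Your proposal is correct and follows the same overall architecture as the paper's proof: parts~(1)--(2) are textbook (both follow Brenner--Scott); part~(3) is proved via the second representation and Leibniz on $\partial_\sigma^\ell((\tau-\sigma)^\ell\eta(\sigma))$; part~(4) is the embedding on the left chained with H\"older on the right; and part~(5) for intermediate~$n$ uses exactly the decomposition $v-T^sv=(v-T^{n-1}v)-T^s(v-T^{n-1}v)$ together with (2) to reduce to~$m=0$.

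The one genuine divergence is in the top-order remainder bound for~(5). You establish the $X$-valued Taylor formula with integral remainder directly for $W^{r+1,1}(I;X)$ functions and then estimate the kernel by $h_t^r$ together with Bochner--Minkowski. The paper instead tests against $z\in X^*$, notes that $\skp{(T^sv)(\tau)}{z}=T^s(\skp{v}{z})(\tau)$, invokes the scalar remainder estimate of Brenner--Scott (Prop.~4.2.8) for $\skp{v(\cdot)}{z}$, and recovers the $X$-norm by taking a supremum over $\norm{z}_{X^*}\le 1$. The duality route sidesteps entirely the question of vector-valued Taylor calculus (and the density argument you mention at the end), reducing it cleanly to a cited scalar fact; your direct route is more self-contained but puts the burden on justifying the Bochner-valued integral remainder, which you rightly flag as the technical crux. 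Both are valid, and both lead to the same pointwise bound $\norm{(v-T^sv)(\tau)}_X\lesssim h_t^s\norm{\partial_t^{s+1}v}_{L^1(I;X)}$. Two small terminological slips worth noting: the inequality $\norm{f}_{L^p(I;X)}\lesssim h_t^{1/p}\norm{f}_{L^\infty(I;X)}$ used in~(4) is just H\"older on a bounded interval and holds for all~$f$, not an inverse estimate (inverse estimates go the other way and are the ones that need $f$ polynomial); and the step where multiplying the Taylor expansion by $\eta$ and integrating produces $T^rw$ is the definition of $T^r$, not an application of~(1).
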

\begin{proof}
  Let $s\in \mathbb{N}_0$, $p\in [1,\infty]$, and $v\in L^1(I;X)$. 
  Then~\ref{itm:abgTay2} and~\ref{itm:abgTay3} follow exactly as in Proposition~4.1.9 and 4.1.17 in~\cite{BS.2008}, respectively.
  We estimate for $\tau \in I$
  \begin{alignat*}{2}
    \norm{(T^s v) (\tau)}_X
    &\leq\sum_{\ell =0}^s \frac{1}{\ell!} \int_{I} \norm{v(\sigma)}_X \, \bigabs{\partialt^\ell \big((\tau-\sigma)^\ell\eta (\sigma)\big)}\dsig
      \\
    &\leq \norm{v}_{L^1(I;X)} \sum_{\ell =0}^s \frac{1}{\ell!} \sum_{k=0}^s \binom{\ell}{k} h_t^{k} \norm{\partial_t^k \eta}_{L^\infty(I)}
    &&\leq h_t^{-1} \norm{v}_{L^1(I;X)}.
  \end{alignat*}
  This shows $\norm{T^s v}_{L^\infty(I;X)} \lesssim h_t^{-1} \norm{v}_{L^1(I;X)}$ and thus verifies~\ref{itm:avgTayStab} for~$m=0$. 
  An application of~\ref{itm:abgTay3} proves the general case $m=0,\ldots, s$. 
  H\"older's inequality yields
  \begin{align*}
    \norm{\partial_t^m T^s v}_{L^p(I;X)}
    &\lesssim h_t^{\frac{1}{p}}
      \norm{\partial_t^m T^s v}_{L^\infty(I;X)}
      \lesssim h_t^{\frac 1p -1}
      \norm{\partial_t^m v}_{L^1(I;X)}
      \leq
   \norm{\partial_t^m v}_{L^p(I;X)}.
  \end{align*}
  This proves~\ref{itm:avgTayStabp}.
  Let $z \in X^*$, then for all~$\tau \in I$
  \begin{align*}
    \skp{(T^sv)(\tau)}{z}_{X \times X^*} = T^s(\skp{v}{z}_{X \times X^*})(\tau).
  \end{align*}
  We assume that $v \in W^{s+1}(I;X)$ and conclude for the remainder $R^s v \coloneqq v - T^s v$ by Proposition~4.2.8 of~\cite{BS.2008} that for almost all~$\tau \in I$ we have
  \begin{align*}
    \bigabs{\skp{(R^sv)(\tau)}{z}_{X \times X^*}}
    &= \bigabs{R^s(\skp{v(\tau)}{z}_{X \times X^*})}
    \\
    &\lesssim \int_I \abs{\tau-\sigma}^s \abs{\partial_t^{s+1} \skp{v(\sigma)}{z}_{X \times X^*}} \dsig
    \\
    &\lesssim \int_I \abs{\tau-\sigma}^s \norm{\partial_t^{s+1} v(\sigma)}_X \dsig\, \norm{z}_{X^*}.
  \end{align*}
  Taking the supremum over all~$z \in X^*$ with  $\norm{z}_{X^*} \leq 1$ implies
  \begin{align*}
    \norm{(R^sv)(\tau)}_X
    &\lesssim \int_I \abs{\tau-\sigma}^s \norm{\partial_t^{s+1} v(\sigma)}_X \dsig
    \lesssim h_t^s \norm{\partial_t^{s+1} v}_{L^1(I;X)}.
  \end{align*}
  This proves that
  \begin{align*}
    \norm{v - T^s v}_{L^\infty(I;X)} =
    \norm{R^s v}_{L^\infty(I;X)}
    &\lesssim h_t^s \norm{\partial_t^{s+1} v}_{L^1(I;X)}.
  \end{align*}
  Thus, applying H\"older's inequality we obtain 
  \begin{align*}
    \norm{v - T^s v}_{L^p(I;X)}
    &\leq h_t^{\frac 1p} \norm{R^sv}_{L^\infty(I;X)}\\
     & \leq h_t^{s+\frac 1p} \norm{\partial_t^{s+1} v}_{L^1(I;X)}
      \lesssim h_t^{s+1} \norm{\partial_t^{s+1} v}_{L^p(I;X)}.
  \end{align*}
  This proves~\ref{itm:avgTayApprox} for $m=0$ and $n = s+1$. 
For general $m=0,\dots, s$ we calculate
  \begin{align*}
    \norm{\partial_t^m (v- T^s v)}_{L^p(I;X)}
    &= \norm{(\partial_t^m v)- T^{s-m} (\partial_t^m v)}_{L^p(I;X)}
    \lesssim h_t^{s+1-m} \norm{\partial_t^{s+1} v}_{L^p(I;X)}.
  \end{align*}
  This proves~\ref{itm:avgTayApprox} for $n=s+1$. For $n \leq s$ we conclude with $T^s T^{n-1} v = T^{n-1} v$ that
  \begin{align*}
    \norm{\partial_t^m (v- T^s v)}_{L^p(I;X)}
    &\leq
    \norm{\partial_t^m (v- T^{n-1} v)}_{L^p(I;X)} +
    \norm{\partial_t^m T^s(v- T^{n-1} v)}_{L^p(I;X)}.
  \end{align*}
  The stability~\ref{itm:avgTayStabp} and the already proven case of~\ref{itm:avgTayApprox} show that
  \begin{align*}
    \norm{\partial_t^m (v- T^s v)}_{L^p(I;X)}
    &\lesssim
      \norm{\partial_t^m (v- T^{n-1} v)}_{L^p(I;X)} \lesssim
      h_t^{n-m}
    \norm{\partial_t^n v}_{L^p(I;X)}.
  \end{align*}
  This proves the claim.
\end{proof}

\printbibliography


\end{document}